\def\Xint#1{\mathchoice
{\XXint\displaystyle\textstyle{#1}}%
{\XXint\textstyle\scriptstyle{#1}}%
{\XXint\scriptstyle\scriptscriptstyle{#1}}%
{\XXint\scriptscriptstyle\scriptscriptstyle{#1}}%
\!\int}
\def\XXint#1#2#3{{\setbox0=\hbox{$#1{#2#3}{\int}$ }
\vcenter{\hbox{$#2#3$ }}\kern-.6\wd0}}
\def\dashint{\Xint-}
\begin{document}
\title{Local behaviour of the mixed local and nonlocal problems with nonstandard growth
\thanks{Supported by National Natural
 Science Foundation of China (No. 12071098), Postdoctoral Science Foundation of Heilongjiang Province (No. LBH-Z22177), National Postdoctoral Program for Innovative Talents of China (No. BX20220381)}}
\author{Mengyao Ding$^{1}$,~~Yuzhou Fang$^{2}$,~~Chao Zhang$^{2, 3}$\thanks{Corresponding author.
E-Mail:
myding@pku.edu.cn (M. Ding),
18b912036@hit.edu.cn (Y. Fang),
 czhangmath@hit.edu.cn (C. Zhang)}\\
{\small $^{1}$ School of Mathematical Sciences, Peking University,
Beijing 100871, PR China} \\
{\small $^{2}$ School of Mathematics, Harbin Institute of Technology, Harbin 150001, PR China}\\
{\small $^{3}$ Institute for Advanced Study in Mathematics, Harbin Institute of Technology, Harbin 150001, PR China}\\
}
\date{}
\newtheorem{theorem}{Theorem}
\newtheorem{definition}{Definition}[section]
\newtheorem{lemma}{Lemma}[section]
\newtheorem{proposition}{Proposition}[section]
\newtheorem{corollary}{Corollary}\newtheorem{remark}{Remark}
\renewcommand{\theequation}{\thesection.\arabic{equation}}
\catcode`@=11 \@addtoreset{equation}{section} \catcode`@=12
\maketitle{}

\begin{abstract}
We consider the mixed local and nonlocal functionals with nonstandard growth
\begin{eqnarray*}
u\mapsto\int_{\Omega}(|Du|^p-f(x)u)\,dx+\int_{\mathbb{R}^N}\int_{\mathbb{R}^N}\frac{|u(x)-u(y)|^q}{|x-y|^{N+sq}}\,dxdy
\end{eqnarray*}
with $1<p\le sq$, $0<s<1$ and $\Omega\subset\mathbb{R}^N$ being a bounded domain. We study, by means of expansion of positivity, local behaviour of the minimizers of such problems, involving local boundedness, local H\"{o}lder continuity and Harnack inequality. The results above can be seen as a natural extension of the results under the center condition that $p>sq$ in [De Filippis-Mingione, Math. Ann., https://doi.org/10.1007/s00208-022-02512-7].
\begin{description}
\item[2010MSC:] 35B45; 35B65; 35R11; 35K55
\item[Keywords:]Local regularity; Mixed local and nonlocal functionals; Nonstandard growth; Caccioppoli estimates; Expansion of positivity
\end{description}
\end{abstract}



\section{Introduction}
In the present paper, we aim at investigating local regularity for the minimizers of the following functional:
\begin{eqnarray}\label{q1}
\mathcal{E}(u ; \Omega):=\int_{\Omega}(F(x,Du)-f(x)u)\,dx+\iint_{\mathcal{C}_{\Omega}} |u(x)-u(y)|^qK_{s q}(x, y)\,dxdy,
\end{eqnarray}
where $\Omega\subset\mathbb{R}^N$ ($N\ge2$) is a bounded domain and
$$
\mathcal{C}_{\Omega}:=(\mathbb{R}^N \times \mathbb{R}^N)\setminus((\mathbb{R}^N \times \Omega)\times(\mathbb{R}^N \times \Omega)).
$$
The Carath\'{e}odory function $F: \Omega \times \mathbb{R}^N \rightarrow \mathbb{R}$ satisfies
\begin{align}\label{F}
\Lambda^{-1}|\xi|^p \leq F(x, \xi) \leq \Lambda|\xi|^p
\end{align}
for some $\Lambda>1$. The symmetric kernel $K_{s q}: \mathbb{R}^N \times \mathbb{R}^N\rightarrow \mathbb{R}$ is a measurable function taking the form
\begin{align}\label{K}
K_{s q}(x, y)=\frac{a(x,y)}{|x-y|^{N+s q}}
\end{align}
with $a(x,y)=a(y,x)$ satisfying
\begin{align}\label{ca}
a_0\le a(x,y)\le A_0\quad \text {for all } x,y \in \mathbb{R}^N \times \mathbb{R}^N,
\end{align}
where the constants $a_0\ge0$ and $A_0>0$. 
In addition,
\begin{equation}\label{exp}
1<p\le sq, \quad s\in(0,1).
\end{equation}
In order to state the definition of minimizers, we need to define a function space
$$
\mathcal{A}(\Omega):=\left\{v: \mathbb{R}^N \rightarrow \mathbb{R} ~\Big| ~v|_{\Omega} \in L^p(\Omega)
\text { and } \mathcal{E}(v ; \Omega)<\infty\right\},
$$
and recall the tail space
$$
L_{sq}^{q-1}\left(\mathbb{R}^{N}\right):=\left\{v \in L_{\operatorname{loc}}^{q-1}\left(\mathbb{R}^{N}\right)\Big| \int_{\mathbb{R}^{N}} \frac{|v(x)|^{q-1}}{1+|x|^{N+sq}} d x<\infty\right\}, 
$$
with a corresponding nonlocal tail defined by
\begin{align*}
\operatorname{Tail}(v ; x_{0}, r)
&:=\left(r^{sq}\int_{\mathbb{R}^{N} \backslash B_{r}\left(x_{0}\right)} \frac{|v(x)|^{q-1}}{\left|x-x_{0}\right|^{N+s q}} d x\right)^{\frac{1}{q-1}} \quad \text{for } r>0, x_0\in\mathbb{R}^N.
\end{align*}
From these definitions, it is easy to deduce that $\operatorname{Tail}(v; x_{0}, r)$ is well-defined for any
$v\in L^{q-1}_{sq}(\Bbb{R}^N)$.

Now we are in a position to present the definition of minimizers to \eqref{q1} as follows.

\medskip

\noindent{\bf Definition 1.1.}
 We say that $u \in \mathcal{A}(\Omega)$ is a minimizer of $\mathcal{E}$ if
\begin{align*}
\mathcal{E}(u ; \Omega) \leq \mathcal{E}(v ; \Omega)
\end{align*}
for any measurable function $v: \mathbb{R}^N \rightarrow \mathbb{R}$ with $v=u$ a.e. in $\mathbb{R}^N \backslash \Omega$.

Next, we will introduce some related results provided by the existing literature in the
coming subsection.

\subsection{Overview of related literature}

Mixed local and nonlocal problems, as a recently emerging subject, have already attracted an intensive attention. Before introducing results on such mixed problems,
we first refer the readers to \cite{Ca11,Ca07,Ca09,S2007,S2006} for the regularity of integro-differential equations,
and \cite{BLS2018,BLS2019,Co,DKP2014,DKP2015, K2007,K2009,MSY21,No23} for the properties of weak solutions to fractional $p$-Laplacian.  
Here, we emphasize the approach developed in \cite{Co}, where Cozzi \cite{Co} introduced fractional De Giori classes and then exhibited regularity theory for these classes.
As immediate applications of regularity results, the boundedness, H\"{o}lder continuity and Harnack inequality directly can be established
for the weak solutions and minimizers of corresponding equations/functionals via proving nonlocal Caccioppoli estimates.

Let us turn to the combination of local and nonlocal linear equations of the simplest version:
\begin{align}\label{q2}
-\Delta u+(-\Delta)^s u=0,
\end{align}
for which the Harnack inequality of nonnegative harmonic functions was showed in \cite{Fo09}; see \cite{Chen12} for the boundary version.
In addition, the Harnack inequality regarding the parabolic version of \eqref{q2} was established in \cite{BBCK9,CK10}, where however the authors only proved such inequality for globally nonnegative solutions. Very recently, Garain-Kinnunen \cite{GK21} proved a weak Harnack inequality with a tail term for sign changing solutions to the parabolic problem of \eqref{q2}. For what concerns radial symmetry, maximum principle, interior and boundary Lipschitz regularity for the weak solutions to \eqref{q2}, Biagi-Dipierro-Valdinoci-Vecchi \cite{BDVV,BDVV'} have systematically studied.  One can also refer to \cite{Su22} and \cite{BMS23}  for  the qualitative and quantitative properties on the equations involving a general nonhomogenous term $g(u,x)$
and the boundary regularity of more general mixed local‐nonlocal operators. 

When it comes to the nonlinear framework of \eqref{q2} as below,
  \begin{align}\label{q3}
-\Delta_p u+(-\Delta)_{p}^{s} u=0,
\end{align}
by using a purely analytic approach based on the De Giorgi-Nash-Moser theory, Garain-Kinnunen \cite{Ki22} demonstrated
local boundedness and H\"{o}lder continuity, Harnack inequalities as well as semicontinuity representative of weak solutions. Consequently, Fang-Shang-Zhang \cite{FSZ22} studied the local regularity for the parabolic counterpart to \eqref{q3}; see \cite{GK2023} for more general construction. Some extra aspects of such equations have already been explored as well, such as existence, uniqueness and higher H\"{o}lder regularity \cite{GL23}, problems with measure data \cite{BS23}. 
It is worth mentioning that De Filippis-Mingione \cite{Fi22} first considered the mixed local and nonlocal functionals with nonuniform growth of the type \eqref{q1} and established the maximal regularity, that is (local) $C^{1,\alpha}$-regularity, for minimizers of such problems under the center assumption $p>sq$.
Byun-Lee-Song \cite{By22} investigated the case that local term provides no regularizing effects, where the H\"{o}lder regularity and Harnack inequality were discussed for the minimizers to the functionals modeled after
\begin{eqnarray*}
u\mapsto\int_{\mathbb{R}^N}\int_{\mathbb{R}^N}\frac{|u(x)-u(y)|^q}{|x-y|^{N+sq}}\,dxdy+\int_{\Omega}a(x)|Du|^p\,dx
\end{eqnarray*}
with $a(x)\ge0, p>q>1$. Note that the hypothesis required in \eqref{exp} and the center assumption on growth exponents $p,q$ in \cite{Fi22,By22} are mutually exclusive. Finally, let us mention that in view of the condition $p\neq q$ the functional \eqref{q1} is closely related to a large number of anisotropic local or nonlocal problems with nonstandard growth, see for instance \cite{Mar89,CM15,BM20,BS20,DeFP19,BKO,FZ}.

\subsection{Statements of the main results}

We would like to point out that the present work is motivated by  the paper of De Filippis and Mingione \cite{Fi22}. \emph{In particular, the authors in the open problem 8.3 of  \cite{Fi22}  posed a question concerning the possibility of removing, when $q>p$, the (local) boundedness assumption on $u$ in Theorems 3, 5 and 6.} To this end, we try to give a positive answer to that. As far as we known, there is no theory yet for the mixed local and nonlocal equation \eqref{q1}
when $sq\ge p$. Our intention is to study the local boundedness, H\"{o}lder continuity and Harnack estimat for the minimizers of \eqref{q1} with nonhomogeneous term $f$ through the expansion of positivity.
In order to simplify our presentations, we introduce some notations needed in the coming text.

\medskip

\noindent{\bf Notations.}
As usual, the domain $B_\rho(x_0)$ is a ball with radius $\rho>0$
 and center $x_0\in\Bbb{R}^N$.
 The symbol can be simplified by writing $B_{\rho}=B_{\rho}(x_0)$.
For any $x_0\in\Bbb{R}^N$, $k\in\Bbb{R}$ and $r>0$, we define the sets
\begin{equation}
\label{A}
A^-(k,x_0,r):=B_{r}(x_0)\cap\big\{x\in\Bbb{R}^N|~u<k\big\} \quad \text{and} \quad A^+(k,x_0,r):=B_{r}(x_0)\cap\big\{x\in\Bbb{R}^N|~u> k\big\},
\end{equation}
which are abbreviated by
$$A^-(k,r):=A^-(k,x_0,r) \quad \text{and} \quad A^+(k,r):= A^+(k,x_0,r)$$
when the choice of $x_0$ is clear.
For $g \in L^{1}(V)$, the mean average of $g$ is given by
$$
(g)_{V} :=\dashint_{V} g(x) \,d x :=\frac{1}{|V|} \int_{V} g(x) \,dx.
$$ 
For the function $a(\cdot)$, we denote
$$a_{V}^+:=\sup_{x\in V}a(x)$$
for any $V\subseteq \Omega$. For the case $V=B_{R}(x_0)$, we simply the notations by taking $a_{R}^+:=a_{B_R(x_0)}^+$.

The continuous measure $\mu$ in this work admits
$$d\mu= d\mu(x,y) = |x-y|^{-N-sq}\,dxdy.$$
Throughout this paper, we set $p^*:=\frac{Np}{N-p}  \text { for } 1<p<N$, $q_s^*:=\frac{Nq}{N-sq}  \text { for } 1<sq<N$
and $\bar{q}:=\frac{q(\gamma-1)}{\gamma}.$ For arbitrarily fixed $r>0$, we also need define the functions
\begin{align}\label{ha}
h_r(t):=\frac{ t^{p-1} }{ r^p }+\frac{ t^{q-1} }{r^{sq}},\qquad g_r(t):=\frac{ t^{\bar{q}-1} }{ r^{\bar{q}} },\qquad t\ge 0
\end{align}
and
\begin{align}\label{Ha}
H_r(t):=\frac{ t^{p} }{ r^p }+\frac{ t^{q} }{r^{sq}},\qquad t\ge0.
\end{align}
Throughout this paper, $H_{r}^{-1},h_{r}^{-1},g_{r}^{-1}$ are separately the inverses of $H_{r},h_{r},g_{r}$, and we use $C$ to denote a general positive constant which only depends on $p,s,q,\gamma,\Lambda,N$ and $a(\cdot,\cdot)$ without additional explanations.
\\

Now, we are in a position to present the boundedness results.
\begin{theorem}
[\label{th1}Local boundedness]
Assume that \eqref{F}--\eqref{exp} hold 
with $a_0>0$.
Suppose that $f \in L_{\operatorname{loc}}^\gamma(\Omega)$ with
$\gamma>\max\Big\{\frac{N}{p},\frac{q}{q-1}\Big\}$.
Let $B_r:=B_r(x_0)\subset\subset\Omega$. Then for any minimizer $u \in \mathcal{A}(\Omega) \cap L_{s q}^{q-1}\left(\mathbb{R}^N\right)$ of the functional \eqref{q1}, the following estimate holds
for any $\delta>0$,
\begin{align*}
\sup _{B_{r/2}} u
\leq C_\delta H_{r}^{-1}\left(\dashint_{B_{ r}} H_{r}\left(u_{+}\right) d x\right)
+\delta h_{r}^{-1}\left(r^{-sq}\operatorname{Tail}^{q-1} \left(u_+;x_0,\frac{r}{2}\right)\right)
+\delta g_{r}^{-1}\left(\Big(\dashint_{B_{r}}|f|^\gamma\,dx\Big)^{\frac{1}{\gamma}}\right)
\end{align*}
 with $C_{\delta}>0$ depending only on $s,p,q,N,\Lambda,\gamma,a_0,A_0$ and $\delta$.
\end{theorem}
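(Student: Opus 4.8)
The plan is to run a De Giorgi--type iteration on super--level sets, adapted to the $(p,q)$--growth through the auxiliary functions $h_r$, $H_r$, $g_r$. Its engine will be a Caccioppoli inequality for the truncations $(u-k)_+$ that controls simultaneously the local $p$--Dirichlet energy and the nonlocal $q$--Gagliardo energy; I then apply the classical Sobolev and the fractional Sobolev inequality to gain integrability on the level sets and close a fast geometric recursion for the levels.

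\emph{Step 1 (Caccioppoli inequality).} Fix concentric balls $B_\rho\subset B_{(\rho+\tau)/2}\subset B_\tau\subseteq B_r$ and a cut--off $\eta\in C_c^\infty(B_{(\rho+\tau)/2})$ with $\eta\equiv 1$ on $B_\rho$, $0\le\eta\le 1$ and $|D\eta|\le C(\tau-\rho)^{-1}$; for $k\ge 0$ write $w:=(u-k)_+$. Inserting into the minimality inequality of Definition~1.1 the competitor $v:=u-\eta^q w$ --- which agrees with $u$ on $\mathbb R^N\setminus\Omega$ and satisfies $\mathcal E(v;\Omega)<\infty$ --- and using $F(x,0)=0$ together with \eqref{F}, \eqref{K}, \eqref{ca}, one isolates $\int_{B_\rho}|Dw|^p$ after a hole--filling step for the local part; for the nonlocal part one splits $\mathcal C_\Omega$ into the diagonal block $B_\tau\times B_\tau$ and the two off--diagonal blocks, uses on the former the elementary inequalities for $q$--th powers of differences (as in the fractional $p$--Laplacian Caccioppoli estimates) so that the Gagliardo energy of $\eta w$ appears with a good sign, and bounds the off--diagonal terms by $(\tau-\rho)^{-sq}\operatorname{Tail}^{q-1}(u_+;x_0,r/2)$ plus a self--referential quantity $(\tau-\rho)^{-sq}(\sup_{B_\tau}w)^{q-1}$. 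This yields, schematically,
\[
\int_{B_\rho}|Dw|^p\,dx+\iint_{\mathbb R^N\times\mathbb R^N}|\eta w(x)-\eta w(y)|^qK_{sq}\,dxdy\le C\!\int_{B_\tau}\! H_{\tau-\rho}(w)\,dx+C\,\Theta\!\int_{B_\tau}\! w\,dx+C\!\int_{A^+(k,\tau)}\!\! |f|\,w\,dx,
\]
with $\Theta:=(\tau-\rho)^{-sq}\big(\operatorname{Tail}^{q-1}(u_+;x_0,r/2)+(\sup_{B_\tau}w)^{q-1}\big)$. The $f$--term is then handled by Hölder's inequality with exponent $\gamma$ and Young's inequality: the hypothesis $\gamma>N/p$ forces $\gamma'<\min\{p^*/p,\,q_s^*/q\}$, so $\|w\|_{L^{\gamma'}(B_\tau)}$ stays strictly below both Sobolev exponents and interpolates with $\|w\|_{L^1}$, while $\gamma>q/(q-1)$ gives $\bar q=q/\gamma'>1$; this is exactly where $\bar q$ and $g_r$ come from.

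\emph{Step 2 (iteration and conclusion).} Feeding the classical and the fractional Sobolev inequalities for $\eta w$ into the left--hand side of the displayed estimate upgrades $\int_{B_\rho}H_r(w)$ to a super--linear norm of $w$ on $A^+(k,\rho)$. Fix $\hat k\ge 0$ and $d>0$, and set $k_j:=\hat k+d(1-2^{-j})$, $\rho_j:=\tfrac r2(1+2^{-j})$ and $Y_j:=\dashint_{B_{\rho_j}}H_r\big((u-k_j)_+\big)\,dx$. Since $(u-k_j)_+\ge d\,2^{-(j+1)}$ on $A^+(k_{j+1},\rho_{j+1})$, Chebyshev gives $|A^+(k_{j+1},\rho_{j+1})|\le C\,2^{jq}d^{-q}r^{sq}|B_r|\,Y_j$ (and its analogue with $p$). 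Combining these measure bounds, the Caccioppoli inequality at the pair $(\rho_{j+1},\rho_j)$ and the two Sobolev inequalities produces a recursion $Y_{j+1}\le C\,\mathfrak{b}^{\,j}\,d^{-\sigma}\big(Y_j^{1+\beta}+\text{(forcing terms from }\Theta\text{ and }f)\big)$ with $\mathfrak{b}>1$ and $\sigma,\beta>0$ depending only on the data; the standard fast--convergence lemma then forces $Y_j\to 0$, i.e.\ $\sup_{B_{r/2}}u\le\hat k+d$, provided $d$ is taken large enough in terms of $Y_0=\dashint_{B_r}H_r(u_+)\,dx$, of $\operatorname{Tail}(u_+;x_0,r/2)$ and of $(\dashint_{B_r}|f|^\gamma\,dx)^{1/\gamma}$. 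Choosing $\hat k+d$ equal to the right--hand side of the asserted bound does the job: $H_r$ is precisely the homogeneity under which the energy part of the recursion is self--similar, so the leading term must be $H_r^{-1}(\dashint_{B_r}H_r(u_+)\,dx)$; and since the self--referential term $(\sup_{B_\tau}w)^{q-1}$ inside $\Theta$ is reabsorbed into $\sup_{B_{r/2}}u$ by Young's inequality with a free parameter $\delta$, the genuinely external contributions of the tail and of $f$ can be carried with the small factor $\delta$ --- producing $\delta\,h_r^{-1}(\cdot)$ and $\delta\,g_r^{-1}(\cdot)$, the functions $h_r,g_r$ being tuned to the right $p$--versus--$q$ and $\gamma$ scalings --- at the price of a large constant $C_\delta$, depending on $s,p,q,N,\Lambda,\gamma,a_0,A_0,\delta$, in front of the $H_r^{-1}$ term. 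Undoing the truncation finishes the proof.

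\emph{Main obstacle.} The principal difficulty is the absence of a single homogeneity: with $1<p\le sq$ and, in general, $q\neq p$, neither energy dominates at all scales, so the Caccioppoli inequality, the gain of integrability and the iteration must all be phrased through $H_r,h_r$ and their inverses rather than plain powers, and one must check that the composite gain exponent $\beta$ built from $p^*$ and $q_s^*$ is strictly positive and stays uniform along the iteration. A second delicate point is the control of the nonlocal tail under truncation: the off--diagonal interactions generated by $(u-k_j)_+\eta$ must be dominated by $\operatorname{Tail}(u_+;x_0,r/2)$ uniformly in $j$ --- not by a $j$--dependent tail --- which forces one to exploit that raising the truncation level simultaneously lowers the function and shrinks its positivity set, all while keeping each competitor admissible in the sense of Definition~1.1 (in particular in $\mathcal A(\Omega)$).
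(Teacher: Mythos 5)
Your skeleton coincides with the paper's: a Caccioppoli estimate for $(u-k)_{+}$ obtained by testing minimality against a cut-off competitor (Lemma \ref{lemCa}), then a De Giorgi iteration of the $H_r$-normalized quantities $Y_j=\dashint_{B_j}H_r((u-k_j)_+)\,dx$ using the classical and fractional Sobolev inequalities with gain $\kappa=\min\{p^*/p,\,q_s^*/q\}$, the fast-convergence lemma, and a starting level assembled from $H_r^{-1}\bigl(\dashint_{B_r}H_r(u_+)\,dx\bigr)$, the tail and $\|f\|_{L^\gamma}$ (Lemmas \ref{lem3.2} and \ref{lem3.3}). However, two points in your sketch do not work as stated. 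First, the term $(\sup_{B_\tau}w)^{q-1}$ in your $\Theta$ is an artifact: in the iteration all radii lie in $[r/2,r]$ and the levels are nonnegative, so $(u-k)_+\le u_+$ and the exterior integral $\int_{\mathbb R^N\setminus B_{\rho_j}}w^{q-1}(y)\,|y-x_0|^{-N-sq}\,dy$ is bounded, uniformly in $j$, by $\overline{\operatorname{Tail}}(u_+;x_0,r/2)$ --- this is precisely how the paper keeps the tail $j$-independent in \eqref{sup2}; no self-referential quantity and no absorption layer are needed, and keeping one forces an extra radii-interpolation argument (Young's inequality alone cannot absorb a sup into a measure-decay recursion).

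Second, and this is the genuine gap, your explanation of where the factor $\delta$ comes from is not viable: absorbing the self-referential sup term only removes that term and leaves $O(1)$ constants in front of $h_r^{-1}(\cdot)$ and $g_r^{-1}(\cdot)$; it cannot make the tail and $f$ contributions small. The actual mechanism (the paper's Lemma \ref{lem3.3}) is the choice of the starting level: taking $\bar k\ \ge\ \delta\,h_r^{-1}\bigl(r^{-sq}\operatorname{Tail}^{q-1}(u_+;x_0,r/2)\bigr)+\delta\,g_r^{-1}(d)$ bounds the coefficients $\operatorname{Tail}^{q-1}/(r^{sq}h_r(\bar k))$ and $d/g_r(\bar k)$ in the recursion by $\delta^{-q}$, and the smallness requirement on $Y_0$ then produces the summand $C_\delta H_r^{-1}\bigl(\dashint_{B_r}H_r(u_+)\,dx\bigr)$ with $C_\delta\sim\delta^{-q\kappa'}$, which is where the entire $\delta$-dependence lands. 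Your closing remark (choosing $\hat k+d$ equal to the right-hand side of the asserted bound) is the right move, but it must be justified through this coefficient bookkeeping, not through the sup-absorption. Finally, make sure both branches of the recursion are superlinear: besides the exponent built from $p^*$ and $q_s^*$ you need $\tfrac1{\kappa'}+\tfrac1{\gamma'}>1$, which is exactly where $\gamma>\tfrac Np$ enters, while $\gamma>\tfrac q{q-1}$ guarantees $\bar q>1$ so that $g_r$ is increasing and invertible.
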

Observe that in the case $a_0>0$, we can see $K_{s q}(x, y)\approx\frac{1}{|x-y|^{N+s q}}$ as usual. Nonetheless, when the function $a(\cdot,\cdot)$ vanishes on some points (i.e., $a_0=0$), the functional \eqref{q1} exhibits the features of mixed local and nonlocal double phase functional in fact. At this point. we still can prove minimizers are locally bounded by confining the distance between $p$ and $q$ as below. In addition, taking into account the nonlocal double integral in \eqref{q1} whose kernel $K_{sq}(\cdot,\cdot)$ is perturbed by coefficient $a$, here we introduce a ``tail space with weight" and the corresponding nonlocal tail with weight denoted by
$$
L^{q-1}_{a,sq}(\Omega,\mathbb{R}^N)=\left\{v\in L^{q-1}_{\rm loc}(\mathbb{R}^N): \underset{x\in \Omega}{{\rm ess}\sup}\int_{\mathbb{R}^N}a(x,y)\frac{|v(y)|^{q-1}}{(1+|y|)^{N+sq}}\,dy<\infty\right\}
$$
and
\begin{align}\label{taila}
\operatorname{Tail}_a(v ; x_{0}, r)
&:=\left(r^{sq}\underset{x\in B_{2r}(x_0)}{{\rm ess}\sup}\int_{\mathbb{R}^{N} \backslash B_{r}\left(x_{0}\right)}a(x,y) \frac{|v(y)|^{q-1}}{\left|y-x_{0}\right|^{N+s q}}\, d y\right)^{\frac{1}{q-1}}.
\end{align}
We can readily verify that $\mathrm{Tail}_a(u;x_0,r)$ is finite for every $x_0\in\Omega$ and $r\in(0,2^{-1}\mathrm{dist\{x_0,\partial\Omega\}})$, provided that $u\in L_{a,sq}^{q-1}\left(\Omega,\mathbb{R}^N\right)$. Thereby when we deduce the local boundedness on minimizers, we only impose local boundedness in $\Omega\times\Omega$ on $a(x,y)$.
\begin{theorem} [\label{th2}Local boundedness]
Suppose that \eqref{F}--\eqref{K} and \eqref{exp} as well as $0\le a(x,y)\in L^\infty_{\rm loc}(\Omega\times\Omega)$ are in force. Let
\begin{align}\label{pqs}
\left\{\begin{array}{l}
\frac{q}{p} \leq 1+\frac{p}{N-p} \text{  for  } 1<p\le N,\\[2mm]
q<\infty\text{  for  } p> N,
\end{array}\right.
\end{align}
and $f \in L_{\operatorname{loc}}^\gamma(\Omega)$ with
\begin{align}\label{gam}
\gamma>\max \left\{\frac{q}{q-p},\frac{p}{p-1}, \frac{N}{p}\right\} \ \text{for } p\le N, \ \text{and } \gamma=1 \ \text{for } p> N.
\end{align}
Then any minimizer $u \in \mathcal{A}(\Omega) \cap L_{a,sq}^{q-1}\left(\Omega,\mathbb{R}^N\right)$ of the functional \eqref{q1} is locally bounded in $\Omega$.
\end{theorem}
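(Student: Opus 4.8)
The plan is to run a De~Giorgi-type level-set iteration on the super-level sets of $u$, the only new features compared with the classical local theory being the mixed scaling forced by the two exponents $p\neq q$ and the fact that the kernel coefficient $a(\cdot,\cdot)$ is merely locally bounded (so, since $a_0=0$, the Gagliardo energy of $u$ cannot be exploited to produce a gain and one must rely only on the local Sobolev embedding). Fix $B_{2r}=B_{2r}(x_0)\subset\subset\Omega$; on it $a\le a_{2r}^{+}<\infty$. First I would establish a Caccioppoli inequality: for a level $k\ge0$ and concentric balls $B_\rho\subset B_R\subset B_{2r}$, test the minimality of $u$ against the competitor $v=u-\eta\,(u-k)_{+}$, where $\eta\in C_c^\infty(B_R)$, $\eta\equiv1$ on $B_\rho$, $|D\eta|\le C(R-\rho)^{-1}$ (so $v=u$ outside $\Omega$ and $v\in\mathcal A(\Omega)$). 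Comparing energies via $\Lambda^{-1}|\xi|^p\le F(x,\xi)\le\Lambda|\xi|^p$ for the local part (no convexity of $F$ is needed), using elementary inequalities for $|a-b|^q$ and the splitting of $\mathcal C_\Omega$ for the nonlocal part, and absorbing the term $\int(1-\eta)^p|D(u-k)_+|^p$ by the usual hole-filling/iteration lemma, one should reach a Caccioppoli inequality of the schematic form
\begin{align*}
\int_{B_\rho}|D(u-k)_+|^p\,dx
&\le \frac{C}{(R-\rho)^p}\int_{B_R}(u-k)_+^p\,dx+\frac{C\,a_{2r}^{+}}{(R-\rho)^{sq}}\int_{B_R}(u-k)_+^q\,dx\\
&\quad+\frac{C\,a_{2r}^{+}}{(R-\rho)^{sq}}\big(\operatorname{Tail}_a(u_+;x_0,R)\big)^{q-1}\big|A^+(k,R)\big|+C\int_{A^+(k,R)}|f|\,(u-k)_+\,dx,
\end{align*}
where the Gagliardo seminorm of $(u-k)_+$ is simply discarded from the left-hand side and the tail is finite thanks to $u\in L^{q-1}_{a,sq}(\Omega,\mathbb R^N)$; when $p>N$ the second line causes no difficulty and $f\in L^1_{\rm loc}$ is enough.

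Next I would combine this with the Sobolev embedding and iterate. For $1<p\le N$, the Sobolev--Poincar\'e inequality on $B_\rho$ controls $(u-k)_{+}$ in $L^{p^*}(B_\rho)$ (in $L^m$ for arbitrarily large $m$ when $p=N$) by the right-hand side of the Caccioppoli inequality, while for $p>N$ the Morrey embedding $W^{1,p}\hookrightarrow L^\infty$ gives the conclusion at once. The structural hypothesis \eqref{pqs}, i.e. $q\le p^{*}$, is exactly what lets the higher term $\int_{B_R}(u-k)_+^q$ be bounded by the $L^{p^*}$-norm of $(u-k)_+$ times a nonnegative power of $|A^+(k,R)|$, absorbing the exponent $q$ into the embedding attached to the exponent $p$; likewise, H\"older's inequality together with $\gamma>\max\{\frac{N}{p},\frac{p}{p-1},\frac{q}{q-p}\}$ (note that $\gamma>\frac{q}{q-p}$ amounts to $\bar q=\frac{q(\gamma-1)}{\gamma}>p$) lets the forcing term be absorbed in the same fashion, at the cost of $\|f\|_{L^\gamma(B_{2r})}$ and a favourable power of $|A^+(k,R)|$. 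Setting, for $j\ge0$ and a free parameter $d>0$,
\begin{align*}
k_j:=d(2-2^{-j}),\qquad \rho_j:=\tfrac{r}{2}(1+2^{-j}),\qquad Y_j:=\big|A^+(k_j,\rho_j)\big|,
\end{align*}
and using $R-\rho\simeq 2^{-j}r$, $k_{j+1}-k_j\simeq 2^{-j}d$ and $(u-k_j)_+\ge k_{j+1}-k_j$ on $A^+(k_{j+1},\cdot)$ to convert $L^{p^*}$-norms into measures of level sets, I expect a recursive estimate
\begin{align*}
Y_{j+1}\le C\,b^{\,j}\Big(d^{-\kappa_1}+d^{-\kappa_2}\big(\operatorname{Tail}_a(u_+;x_0,r)+\|f\|_{L^\gamma(B_{2r})}\big)^{\theta}\Big)Y_j^{\,1+\beta}
\end{align*}
with $b>1$ and $\beta,\kappa_1,\kappa_2,\theta>0$ depending only on the data (through $p^{*}$ and $\bar q$). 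The classical fast-geometric-convergence lemma then gives $Y_j\to0$ once $Y_0=|A^+(d,r)|$ is small enough, which is arranged by choosing $d$ large in terms of $\dashint_{B_r}H_r(u_+)\,dx$, $\operatorname{Tail}_a(u_+;x_0,r)$ and $\|f\|_{L^\gamma(B_{2r})}$. Hence $u\le 2d$ a.e. on $B_{r/2}$; applying the same argument to $-u$, which minimizes a functional of the same type with $\tilde F(x,\xi):=F(x,-\xi)$ and datum $-f$, yields a bound from below, and since $B_{2r}\subset\subset\Omega$ was arbitrary, $u\in L^\infty_{\rm loc}(\Omega)$.

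The hardest part will be the iteration. Because the Caccioppoli inequality carries two genuinely different scalings, $(R-\rho)^{-p}$ on the $p$-term and $(R-\rho)^{-sq}$ on the $q$-term, with $p\neq q$, the recursion is non-homogeneous, and one has to track carefully how the level parameter $d$ and the shrinking measures $|A^+(k_j,\rho_j)|$ enter each of the four terms so as to close a single inequality $Y_{j+1}\le Cb^{j}Y_j^{1+\beta}$ — this is precisely the role of the auxiliary functions $H_r,h_r,g_r$. A secondary technical difficulty is to make sure the nonlocal competitor $v=u-\eta(u-k)_+$ generates, apart from the local terms, only the ``good'' nonlocal contributions above: the cross terms between the cutoff and the kernel must be controlled (again $q\le p^*$ helps, after the embedding) and the long-range interaction must collapse into $\operatorname{Tail}_a$, which is where the weighted tail space $L^{q-1}_{a,sq}(\Omega,\mathbb R^N)$ and the hypothesis $a\in L^\infty_{\rm loc}(\Omega\times\Omega)$ are used.
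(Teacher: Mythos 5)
Your setup (the competitor $v=u-\eta(u-k)_+$, the Caccioppoli inequality with the Gagliardo term discarded since $a_0=0$, the weighted tail $\operatorname{Tail}_a$ made finite by $u\in L^{q-1}_{a,sq}$ and $a\in L^\infty_{\rm loc}(\Omega\times\Omega)$, the reading of \eqref{pqs} as $q\le p^*$ and of $\gamma>\frac{q}{q-p}$ as $\bar q>p$, and the reduction of the lower bound to $-u$) all match the paper's Lemma \ref{lemCa} and the proof of Theorem \ref{th2}. The genuine gap is in the quantity you iterate. You set $Y_j:=|A^+(k_j,\rho_j)|$ and expect a single recursion $Y_{j+1}\le Cb^jY_j^{1+\beta}$, but with $a_0=0$ the only way to handle the term $\frac{a^+_{2r}}{(R-\rho)^{sq}}\int_{B_R}(u-k)_+^q$ is through the $L^{p^*}$-norm of $(u-k)_+$, and H\"older gives
\begin{align*}
\int_{B_{\rho_j}}(u-k_j)_+^q\,dx\le \big\|(u-k_j)_+\big\|_{L^{p^*}(B_{\rho_j})}^{q}\,\big|A^+(k_j,\rho_j)\big|^{1-\frac{q}{p^*}}.
\end{align*}
If you freeze the $L^{p^*}$-norm as a fixed constant (which is all a measure-only iteration allows), then after Chebyshev and Sobolev the exponent carried by $Y_j$ from this term is $\frac{p^*-q}{p}$, which is $\le 1$ as soon as $q\ge p^*-p$ and equals $0$ at the admissible endpoint $q=p^*$ of \eqref{pqs}; there is then no superlinear gain and the fast-geometric-convergence lemma does not apply. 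If instead you re-expand that norm through Sobolev--Caccioppoli at the previous step, the scheme is no longer a single recursion in the measures. So the plan as written does not close on the full range allowed by \eqref{pqs}.

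The paper's resolution, which you would need to adopt (and which is more than the bookkeeping role you assign to $H_r,h_r,g_r$), is to iterate the averaged energy $Y_i=\dashint_{B_i}H(w_i)\,dx$ with $H(t)=t^p+a^+_r t^q$: Lemma \ref{lem-a} splits the $p$- and $q$-masses of $w_{i+1}$ so that the $q$-part is controlled by $\big(\dashint|Dw_{i+1}|^p\big)^{q/p}$, i.e. the gain at $q=p^*$ comes from the superlinear power $q/p>1$ of the $p$-energy (and from $\bar q/p>1$, i.e. \eqref{gam}, for the $f$-term), not from a measure factor. This yields the recursion \eqref{v0'} of Lemma \ref{lem3.1}, whose exponents $q/p$, $\frac{(\gamma-1)q}{\gamma p}$, $1+\sigma$, $1+\sigma-\frac1\gamma$, $p$ are all strictly larger than $1$, and smallness of $Y_0$ is then arranged qualitatively by letting $\bar k\to\infty$ (dominated convergence, using $H(u)\in L^1_{\rm loc}$, which is exactly what $q\le p^*$ guarantees), after which Lemma \ref{lem-iteration} concludes. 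Your quantitative choice of the level $d$ is unnecessary for this purely qualitative statement and would in any case have to be redone for the energy iteration.
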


Based on the above boundedness result, Theorem \ref{th1}, we can further apply the expansion of positivity technique to establish the H\"{o}lder continuity of minimizers.
We remark that the coming H\"{o}lder continuity result is built on the assumption $a_0>0$. The main reason is  that when $a(x,y)\ge0$, the constant $C$ in the Lemma \ref{lem4.1} and Lemma \ref{lem4.2} would depend on
$L^\infty$-norm of minimizers and also on $M$ in Lemma \ref{lem4.2},
even if we impose the H\"{o}lder continuity on $a(\cdot,\cdot)$.
This would lead to the failure of the iteration in the proof of Theorem \ref{th3}.

\begin{theorem} [\label{th3}H\"{o}lder continuity]
Assume that \eqref{F}--\eqref{exp} hold 
with $a_0>0$.
Let $f \in L_{\operatorname{loc}}^\gamma(\Omega)$ with 
$\gamma>\max\Big\{\frac{N}{p},\frac{q}{q-1}\Big\}$.
Then any minimizer $u \in \mathcal{A}(\Omega) \cap L_{s q}^{q-1}\left(\mathbb{R}^N\right)$ of the functional \eqref{q1} is locally $C^{0,\alpha}$-continuous in $\Omega$ with some $\alpha\in(0,1)$ depending only upon $N,p,s,q,\Lambda,\gamma,a_0,A_0$.
\end{theorem}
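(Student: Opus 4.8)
The plan is to run a De~Giorgi-type oscillation-decay scheme, using as inputs the local boundedness from Theorem~\ref{th1} and the expansion of positivity packaged in Lemmas~\ref{lem4.1} and \ref{lem4.2}, with all scaling bookkeeping carried out through the functions $h_r,H_r,g_r$ of \eqref{ha}--\eqref{Ha}. Fix $B_R=B_R(x_0)\subset\subset\Omega$. By Theorem~\ref{th1} we have $u\in L^\infty(B_{R/2})$; write $M:=\|u\|_{L^\infty(B_R)}$ and note that $\operatorname{Tail}(|u|;x_0,\rho)<\infty$ for $\rho\le R/2$, since $u$ is bounded near $x_0$ and $u\in L^{q-1}_{sq}(\mathbb{R}^N)$. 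Denote by $\mathfrak d$ a ``data quantity'' built from $M$, $\operatorname{Tail}(|u|;x_0,R)$ and $\big(\dashint_{B_R}|f|^\gamma\big)^{1/\gamma}$. From now on all balls are centered at $x_0$, and it suffices to establish the oscillation-decay estimate $\operatorname{osc}_{B_\rho}u\le C(\rho/R)^{\alpha}\,\mathfrak d$ for all $\rho\le R/4$ and some $\alpha\in(0,1)$; interior $C^{0,\alpha}$ regularity on compact subsets of $\Omega$ then follows by translation of the center and a covering argument.

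\emph{Step 1: Caccioppoli estimates and a critical-mass lemma.} Testing minimality against competitors $u\mp\eta^q(u-k)_\pm$ and invoking \eqref{F}--\eqref{ca}, one obtains for every level $k$ and every pair of concentric balls $B_\rho\subset B_r\subset B_R$ a Caccioppoli inequality bounding $\int_{B_\rho}|D(u-k)_\pm|^p$ together with the nonlocal Gagliardo energy of $(u-k)_\pm$ on $B_\rho$ by $(r-\rho)^{-p}\int_{B_r}(u-k)_\pm^p$, $(r-\rho)^{-sq}\int_{B_r}(u-k)_\pm^q$, a nonlocal tail term of $(u-k)_\pm$, and a forcing term governed by $\big(\dashint_{B_r}|f|^\gamma\big)^{1/\gamma}$ --- this is the content of Lemma~\ref{lem4.1}. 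Combining it with the Sobolev embedding for the local part ($W^{1,p}\hookrightarrow L^{p^*}$) and the fractional Sobolev embedding for the nonlocal part ($W^{s,q}\hookrightarrow L^{q_s^*}$), together with H\"older's inequality to reconcile exponents (the relation $1<p\le sq$ of \eqref{exp} being what makes this possible), and running the classical fast geometric iteration over shrinking balls and levels, one reaches a critical-mass lemma: there is $\nu\in(0,1)$, depending only on the structural data and --- crucially --- \emph{not} on $M$ because $a_0>0$ forces $K_{sq}(x,y)\approx|x-y|^{-N-sq}$, such that if the superlevel (resp.\ sublevel) set of $u$ at height $k$ occupies at most the fraction $\nu$ of $B_{2r}$, then $u\le k$ (resp.\ $u\ge k$) on $B_r$, up to corrections of the form $h_r^{-1}\big(r^{-sq}\operatorname{Tail}^{q-1}((u-k)_\pm;x_0,r)\big)$ and $g_r^{-1}\big((\dashint_{B_r}|f|^\gamma)^{1/\gamma}\big)$.

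\emph{Step 2: Expansion of positivity and oscillation decay.} Lemma~\ref{lem4.2} upgrades this to a genuine expansion of positivity: via a logarithmic (De~Giorgi--Moser) estimate that shrinks the measure of the sublevel sets from ``one half'' down below $\nu$ at a slightly smaller radius, followed by one pass of Step~1, there are $\sigma,\delta\in(0,1)$ depending only on the data with the property that $u\ge k\ge0$ on half of $B_{2r}$ implies $u\ge\delta k$ on $B_{\sigma r}$, again modulo the same tail and forcing corrections. Applying this to $\pm(u-k_r)$ with $k_r:=\tfrac12\big(\sup_{B_r}u+\inf_{B_r}u\big)$, and using that one of the sets $\{u\ge k_r\}$, $\{u\le k_r\}$ fills at least half of $B_{r/2}$, yields the oscillation-reduction inequality
\[
\operatorname{osc}_{B_{\sigma r/2}}u\;\le\;\big(1-\tfrac{\delta}{2}\big)\operatorname{osc}_{B_r}u\;+\;C\,h_r^{-1}\!\Big(r^{-sq}\operatorname{Tail}^{q-1}\big((u-k_r)_\pm;x_0,r\big)\Big)\;+\;C\,g_r^{-1}\!\Big(\Big(\dashint_{B_r}|f|^\gamma\,dx\Big)^{1/\gamma}\Big).
\]
Now set $r_j:=(\sigma/2)^j(R/4)$, $\omega_j:=\operatorname{osc}_{B_{r_j}}u$ and $\mathcal T_j:=\operatorname{Tail}(u;x_0,r_j)$. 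Decomposing $\mathbb{R}^N\setminus B_{r_j}$ into the annuli $B_{r_{i-1}}\setminus B_{r_i}$ ($1\le i\le j$) and the far region $\mathbb{R}^N\setminus B_{r_0}$, and using $|u-k_{r_i}|\le\omega_i$ on $B_{r_i}$, one bounds $\operatorname{Tail}((u-k_{r_j})_\pm;x_0,r_j)\le C\sum_{i<j}(r_j/r_i)^{\frac{sq}{q-1}}\omega_i+C(r_j/r_0)^{\frac{sq}{q-1}}\mathfrak d$, and $\mathcal T_j$ itself obeys an analogous self-improving bound. Feeding these into the displayed inequality, using the scaling properties of $h_r^{-1},g_r^{-1}$ under $r\mapsto\sigma r$, and using $\gamma>N/p$ (so that, as in the classical De~Giorgi--Stampacchia theory, the forcing correction after $g_{r_j}^{-1}$ carries a strictly positive power of $r_j$), one arrives at a closed recursion for the combined gauge $W_j:=\omega_j+\mathcal T_j$, namely $W_{j+1}\le(1-\tfrac{\delta}{2})W_j+C\sum_{i=0}^{j}\lambda^{j-i}W_i+C\lambda^{j}\mathfrak d$ with $\lambda\in(0,1)$. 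The standard real-analysis iteration lemma --- choosing the radius ratio $\sigma/2$ small enough that the accumulated tail term does not overcome the gain $\delta/2$ --- then yields $\omega_j\le C\theta^{j}\mathfrak d$ for some $\theta\in(0,1)$, equivalently $\operatorname{osc}_{B_\rho}u\le C(\rho/R)^{\alpha}\mathfrak d$ with $\alpha=\log\theta/\log(\sigma/2)>0$, whence the claim after undoing the localization.

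\emph{Expected main obstacle.} Two points require care. First, the constants $\nu,\sigma,\delta$ in the critical-mass and expansion-of-positivity lemmas must be independent of $\|u\|_{L^\infty}$ and of the truncation level $k$: this is exactly where $a_0>0$ is indispensable, since it makes $K_{sq}$ uniformly comparable to the plain fractional kernel and hence the nonlocal Caccioppoli and De~Giorgi estimates ``universal'' (for $a_0=0$ these constants would unavoidably depend on $M$, as the authors remark, and the iteration in Step~3 would collapse). Second --- and this is the genuinely new difficulty relative to the case $p>sq$ of \cite{Fi22} --- one must control the accumulation of the nonlocal tail across all scales so that, once summed, it is only a small perturbation of the contraction factor $1-\delta/2$; since $p\le sq$ the local and nonlocal terms no longer share a single homogeneity, so this balancing has to be performed through the nonstandard gauges $h_r,g_r,H_r$ and dictates the precise choice of the radius ratio in the iteration.
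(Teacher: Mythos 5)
Your overall architecture --- local boundedness, then the expansion of positivity of Lemma \ref{lem4.2} fed into a dyadic oscillation iteration in which the nonlocal tail is controlled through the previously obtained oscillation bounds --- is the same as the paper's. The genuine gap is in Step 2, in how you control the accumulated tail. On the annulus $B_{r_{j-1}}\setminus B_{r_j}$ you bound $(u-k_{r_j})_\pm$ by the full oscillation $\omega_{j-1}$; since $\int_{B_{r_{j-1}}\setminus B_{r_j}}|x-x_0|^{-N-sq}\,dx\approx r_j^{-sq}$ no matter how the ratio $r_j/r_{j-1}$ is chosen, this single annulus already contributes about $\omega_{j-1}^{q-1}$ to $r_j^{sq}\operatorname{Tail}^{q-1}\big((u-k_{r_j})_\pm;x_0,r_j\big)$, hence a correction of size comparable to $\omega_{j-1}$ after applying $h_{r_j}^{-1}$, with a structural (not small) constant in front. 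Equivalently, in your recursion $W_{j+1}\le(1-\tfrac{\delta}{2})W_j+C\sum_{i\le j}\lambda^{j-i}W_i+C\lambda^{j}\mathfrak d$ the terms with $i=j,\,j-1$ carry weight $\approx 1$ and a constant $C\ge1$, so the right-hand side is not a contraction; shrinking the radius ratio $\sigma/2$ (i.e.\ $\lambda$) cannot repair this because the adjacent-annulus contribution carries no power of that ratio at all. As written, the iteration does not close.

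The missing ingredient is that the tail entering Lemma \ref{lem4.2} is not the tail of $|u-k_{r_j}|$ but of the one-signed truncation $(u-M)_-$ with $M$ taken near the current infimum (after selecting the good alternative in \eqref{hold12}); this truncation vanishes on the current ball, and on the intermediate annuli $B_{r_{i-1}}\setminus B_{r_i}$ it is bounded by the increments $\omega_{i-1}-\omega_j$ of the oscillation (equivalently $\inf_{B_{r_j}}u-\inf_{B_{r_{i-1}}}u$), not by $\omega_{i-1}$ itself. Under the induction hypothesis $\omega_i\approx 4^{-\alpha i}L$ these increments behave like $\rho^\alpha-1$, so the accumulated tail is controlled by $\int_4^{+\infty}(\rho^\alpha-1)^{q-1}\rho^{-1-sq}\,d\rho$ plus a far-field term of order $4^{(\alpha q-sq)j}$, and both are forced below the threshold $\tfrac{\delta}{4}\cdot\tfrac{L}{2\cdot 4^{\alpha j}}$ by choosing the H\"older exponent $\alpha$ small as in \eqref{hold1}--\eqref{hold2} and the starting index $j_0$ large as in \eqref{hold3}; the $f$-term is absorbed through the constraint $\alpha\le\frac{p\gamma-N}{2\gamma(p-1)}$, consistent with your use of $\gamma>N/p$. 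This is precisely what the paper implements via the normalized function $v=\frac{2\cdot 4^{\alpha j}}{L}\big(u-\frac{M_j+m_j}{2}\big)$ and the bounds \eqref{hold10}--\eqref{hold11}: the smallness comes from the choice of $\alpha$, not from the radius ratio. A minor point besides: the Caccioppoli estimate you invoke is Lemma \ref{lemCa}, while Lemma \ref{lem4.1} is the measure-shrinking (De Giorgi isoperimetric) step, not the Caccioppoli inequality.
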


Additionally, according to the supremum estimate in Theorem \ref{th1}, tail estimate below along with expansion of positivity, we could deduce the Harnack inequality for minimizers. However, when $a(x,y)\ge0$, it is not clear how to control $\operatorname{Tail}_a(u_{+} ; x_{0}, R)$ by $\operatorname{Tail}_a(u_{-} ; x_{0}, R)$.
Hence, we impose the positivity assumption on $a_0$ to ensure tail term in the Harnack inequality involves $u_{-}$ instead of $u$.
\begin{theorem} [\label{th4}Harnack inequality]
Assume that \eqref{F}--\eqref{exp} hold 
with $a_0>0$.
Suppose that $f \in L_{\operatorname{loc}}^\gamma(\Omega)$ with 
$\gamma>\max\Big\{\frac{N}{p},\frac{q}{q-1}\Big\}$. Let $u \in \mathcal{A}(\Omega) \cap L_{s q}^{q-1}\left(\mathbb{R}^N\right)$ is a minimizer of the functional \eqref{q1} such that
$u\ge 0$ in $B_{2R}(x_0)\subset\Omega$.  
Assume $R\in(0,1)$ satisfies $0<R<\operatorname{dist}\left(x_0, \partial \Omega\right) / 4$. Then there holds that
$$\sup_{B_R(x_0)} u \le  C\Big(\inf_{B_R(x_0)} u +\operatorname{Tail}_a(u_{-} ; x_{0}, R)+
 (R^{  p-N/\gamma  }\|f\|_{L^\gamma(B_{2R})})^{\frac{1}{p-1}}
+(  R^{   \bar{q}-N/\gamma  }\|f\|_{L^\gamma(B_{2R})})^{\frac{1}{\bar{q}-1}}\Big)$$
with $C$ only depending on $\Lambda,s,p,q,N,\gamma,a_0,A_0$.
\end{theorem}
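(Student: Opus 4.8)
The plan is to deduce the two-sided estimate from a \emph{supremum bound} and a \emph{weak Harnack estimate}, glued together at a common exponent. Throughout, abbreviate the lower-order error by
\begin{equation*}
E:=\operatorname{Tail}_a(u_-;x_0,R)+\big(R^{\,p-N/\gamma}\|f\|_{L^\gamma(B_{2R})}\big)^{\frac1{p-1}}+\big(R^{\,\bar q-N/\gamma}\|f\|_{L^\gamma(B_{2R})}\big)^{\frac1{\bar q-1}} .
\end{equation*}
Since $a_0>0$ and $u\ge0$ on $B_{2R}$, Theorem \ref{th1} applies and, in particular, $u$ is finite on $B_R$. First I would record the supremum bound
\begin{equation*}
\sup_{B_R(x_0)}u\le C\Big(\dashint_{B_{2R}(x_0)}u^{\,t}\,dx\Big)^{1/t}+C\,E\qquad\text{for every }t>0,
\end{equation*}
with $C=C(t,\text{data})$, and the weak Harnack estimate
\begin{equation*}
\Big(\dashint_{B_{2R}(x_0)}u^{\,t_0}\,dx\Big)^{1/t_0}\le C\big(\inf_{B_R(x_0)}u+E\big)
\end{equation*}
for some $t_0>0$; combining them at $t=t_0$ yields the theorem.

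For the supremum bound I would start from Theorem \ref{th1} applied on balls between $B_R$ and $B_{2R}$: using $u_+=u$ on $B_{2R}$ and the monotonicity of $H_r$ (which lets one replace $H_r^{-1}(\dashint_{B_r}H_r(u))$ by $(\dashint_{B_r}u^{\,p})^{1/p}+(\dashint_{B_r}u^{\,q})^{1/q}$, up to constants) gives a first version with exponents $p$ and $q$. The two additional ingredients are: \emph{(i)} the tail estimate stated below, which bounds $\operatorname{Tail}_a(u_+;x_0,R)$ by a local average of $u$ on $B_{2R}$ plus $\operatorname{Tail}_a(u_-;x_0,R)$ plus the $f$-terms --- this is the step that requires $a_0>0$, so that $\operatorname{Tail}_a\simeq\operatorname{Tail}$ and $u$ may be compared with the $\mathcal{E}$-minimizer carrying its own exterior data on a slightly larger ball; and \emph{(ii)} the self-improvement of the exponent down to arbitrarily small $t>0$, a Bombieri--Giusti type iteration resting on the reverse-H\"{o}lder structure of the Caccioppoli inequalities. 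The $f$-dependence unwinds, through the definitions of $h_r^{-1}$ and $g_r^{-1}$, to precisely the two summands of $E$ carrying $f$.

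For the weak Harnack estimate the engine is the expansion of positivity, Lemmas \ref{lem4.1}--\ref{lem4.2}: whenever $u$ exceeds a level $k$ on a fixed fraction of a ball $B_\rho(z)\subset B_{2R}$ and $k\gtrsim E$, then $u\ge\eta k$ on a dilated ball, with $\eta$ and the measure threshold depending only on the data --- and here $a_0>0$ is again used to guarantee that these constants do not involve $\|u\|_{L^\infty}$ nor the parameter $M$ of Lemma \ref{lem4.2}. Iterating this together with a De Giorgi clustering / measure-shrinking argument inside $B_{2R}$ produces a weak-$L^{\varepsilon}$ bound $|\{u>\lambda\}\cap B_{2R}|\le C\big((\inf_{B_R}u+E)/\lambda\big)^{\varepsilon}|B_{2R}|$ valid for all $\lambda>0$; integrating in $\lambda$ gives the displayed estimate with any $t_0<\varepsilon$.

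The main obstacle is the interplay of the two nonlocal tails: the subsolution machinery behind the supremum bound inevitably produces $\operatorname{Tail}(u_+)$, while the supersolution machinery behind the expansion of positivity produces $\operatorname{Tail}_a(u_-)$, and only the latter is permitted in the conclusion --- it is exactly to convert the former into the latter (via the tail estimate and comparison with the minimizer) that $a_0>0$ is assumed, as anticipated in the discussion preceding the theorem. A second difficulty, intrinsic to the nonstandard growth $p\le sq$, is that $u$ cannot be normalized by scaling, so the gauges $H_r,h_r,g_r$ and their inverses must be carried through every estimate instead of being absorbed into a normalization; this is handled by exploiting their monotonicity and the elementary comparisons relating $H_r^{-1}(\dashint H_r(u))$ to $\dashint u^{\,p}$ and $\dashint u^{\,q}$, just as in the proof of Theorem \ref{th1}.
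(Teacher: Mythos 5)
Your proposal follows essentially the same route as the paper: a supremum estimate coming from Theorem \ref{th1}, a tail estimate (proved by testing the minimality/Caccioppoli inequality at the level $2\sup_{B_r}u$, which is where $a_0>0$ converts $\operatorname{Tail}(u_+)$ into $\operatorname{Tail}(u_-)$ plus local terms), an interpolation/absorption step lowering the exponent to the weak Harnack exponent $\varepsilon_0$, and the weak Harnack inequality of Lemmas \ref{lem5.1}--\ref{lem5.2} obtained from the expansion of positivity and a covering argument. The only cosmetic difference is that you phrase the exponent-lowering as a Bombieri--Giusti type iteration valid for all $t>0$, while the paper interpolates directly at $\varepsilon_0$ and absorbs $\tfrac12\sup_{B_{2r}}u$ via the covering lemma of Cozzi; the substance is the same.
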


\subsection{Preliminaries}

This section collects some imbedding inequalities and iteration results as preliminary ingredients. The first one is fractional Poincar\'{e} inequality in $W^{s, q}$. From now on, unless otherwise specified, we always suppose the conditions \eqref{F}--\eqref{exp} hold true.

\begin{lemma}\label{lem-Poincare}{\rm (see
\cite[Formula (6.3)]{M03})}
 Let $s \in(0,1)$ and $q\in[1,\infty)$. Then for any $f \in W^{s, q}\left(B_{r}\right)$,
there holds that
$$
\dashint_{B_{r}}\left|f(x)-(f)_{B_{r}}\right|^{q}\,dx
\leq C r^{sq-N}\int_{B_{r}} \int_{B_{r}} \frac{ |f(x)-f(y)|^{q}}{|x-y|^{N+sq }}\,dxdy
$$
with $C>0$ only depending on $s,q$ and $N$.
\end{lemma}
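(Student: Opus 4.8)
The plan is to reduce the estimate to Jensen's inequality plus the trivial lower bound on the Gagliardo kernel over $B_r\times B_r$. First I would rewrite the left-hand side using the identity $f(x)-(f)_{B_r}=\dashint_{B_r}\big(f(x)-f(y)\big)\,dy$, valid for a.e.\ $x\in B_r$, and apply Jensen's inequality to the convex map $t\mapsto|t|^q$ (legitimate since $q\ge1$). This gives $|f(x)-(f)_{B_r}|^q\le\dashint_{B_r}|f(x)-f(y)|^q\,dy$, and averaging in $x$ over $B_r$ yields $\dashint_{B_r}|f(x)-(f)_{B_r}|^q\,dx\le\dashint_{B_r}\dashint_{B_r}|f(x)-f(y)|^q\,dx\,dy$.

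Next I would bring in the kernel. Since $|x-y|\le 2r$ whenever $x,y\in B_r$, we have $1\le(2r)^{N+sq}|x-y|^{-N-sq}$; inserting this factor into the integrand and writing $|B_r|=\omega_N r^N$ (with $\omega_N=|B_1|$) turns the double average above into $2^{N+sq}\omega_N^{-2}\,r^{sq-N}\int_{B_r}\int_{B_r}|f(x)-f(y)|^q|x-y|^{-N-sq}\,dx\,dy$. Combining the two steps proves the claim with $C=2^{N+sq}/\omega_N^2$, which depends only on $N,s,q$ as required. The hypothesis $f\in W^{s,q}(B_r)$ guarantees finiteness of the right-hand side, so every manipulation is justified.

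There is essentially no obstacle: the result is classical, and one may simply quote \cite{M03}; the only point needing a little care is bookkeeping of the normalising constant $\omega_N$ so that the final $C$ has the advertised dependence. I would also note that the bound $|x-y|^{-N-sq}\ge(2r)^{-N-sq}$ is deliberately crude---it discards the singularity of the kernel---but this loss is harmless here, since the left-hand side is merely the $L^q$ mean oscillation rather than a full Gagliardo seminorm.
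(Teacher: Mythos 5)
Your argument is correct and complete: the Jensen step applied to $f(x)-(f)_{B_r}=\dashint_{B_r}(f(x)-f(y))\,dy$ followed by the crude kernel bound $1\le (2r)^{N+sq}|x-y|^{-N-sq}$ yields exactly the stated inequality with $C=2^{N+sq}/\omega_N^2$, depending only on $N,s,q$. The paper does not prove this lemma but simply cites \cite[Formula (6.3)]{M03}; what you have written is the standard proof of that cited formula, so there is nothing to compare beyond noting that your self-contained argument is a valid substitute for the citation.
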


What follows is a version of isoperimetric inequality for level sets of Sobolev functions. This kind of inequality was first introduced by De Giorgi \cite{De}. Here we invoke the one formulated by \cite{Co}.
\begin{lemma}\label{measure1}
Let $N \geq 2$ be an integer and $p>1$. Then, for any two real numbers $h_1<h_2$ and any $f \in W^{1, p}\left(B_R\right)$,
it holds that
$$
\Bigg( \frac{\big|B_R \cap\{f \leq h_1\} \big|}{|B_R|}\cdot
\frac{\big| B_R \cap\{f \geq h_2\}\big|}{|B_R|} \Bigg)^{\frac{N-1}{N}}
\leq \frac{CR^{1-\frac{N}{p}} }{h_2-h_1}\|\nabla f\|_{L^p\left(B_R\right)}
\Bigg(\frac{\big|B_R \cap\{h_1<f<h_2\}\big|}{|B_R|}\Bigg)^{\frac{p-1}{p}},
$$
for some constant $C\ge 1$ depending only on $N$ and $p$.
\end{lemma}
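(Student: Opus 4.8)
The plan is to derive this classical De~Giorgi--type estimate from the coarea formula together with the relative isoperimetric inequality in a ball; this is precisely what yields the two exponents $\frac{N-1}{N}$ and $\frac{p-1}{p}$ in their stated (sharp) form. Write
$$A:=B_R\cap\{f\le h_1\},\qquad B:=B_R\cap\{f\ge h_2\},\qquad D:=B_R\cap\{h_1<f<h_2\},$$
and recall that $f\in W^{1,p}(B_R)\subset\mathrm{BV}(B_R)$. If $|A|=0$ or $|B|=0$ the inequality is trivial, so assume both are positive (which on the connected ball $B_R$ entails $|D|>0$).

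First I would apply the coarea formula to the truncation $g:=\min\{\max\{f,h_1\},h_2\}$, whose weak gradient equals $\nabla f\,\mathbf{1}_{D}$ and for which $\{g>t\}\cap B_R=\{f>t\}\cap B_R$ when $t\in(h_1,h_2)$, obtaining
$$\int_{D}|\nabla f|\,dx=\int_{B_R}|\nabla g|\,dx=\int_{h_1}^{h_2}P\big(\{f>t\};B_R\big)\,dt,$$
where $P(\,\cdot\,;B_R)$ denotes relative perimeter. For a.e.\ $t\in(h_1,h_2)$ the set $E_t:=\{f>t\}\cap B_R$ has finite perimeter, and since $B\subseteq E_t$ and $A\subseteq B_R\setminus E_t$ we get $\min\{|E_t|,|B_R\setminus E_t|\}\ge\min\{|A|,|B|\}$; the relative isoperimetric inequality in $B_R$ (whose constant is scale invariant, hence depends on $N$ only) then gives $P(\{f>t\};B_R)\ge c_N\min\{|A|,|B|\}^{\frac{N-1}{N}}$. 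Integrating over $t\in(h_1,h_2)$ yields
$$(h_2-h_1)\,\min\{|A|,|B|\}^{\frac{N-1}{N}}\le C\int_{D}|\nabla f|\,dx.$$

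It then remains to estimate the right-hand side by H\"older, $\int_{D}|\nabla f|\,dx\le\|\nabla f\|_{L^p(B_R)}|D|^{\frac{p-1}{p}}$, to use $\min\{|A|,|B|\}\ge |A|\,|B|/|B_R|$ (valid because $\max\{|A|,|B|\}\le|B_R|$), and to substitute $|B_R|=\omega_N R^N$; collecting the resulting factor $|B_R|^{\frac{p-1}{p}-\frac{N-1}{N}}=|B_R|^{\frac1N-\frac1p}=CR^{1-N/p}$ and normalising all three measures by $|B_R|$ gives exactly the asserted inequality, with $C$ depending only on $N$ and $p$. I expect the only real subtleties to be the scale invariance of the relative isoperimetric constant (so that $C$ is independent of $R$), the validity of the coarea identity for the Sobolev function $f$ (routed through its truncation $g$, together with the fact that almost every superlevel set has finite perimeter), and the bookkeeping of the powers of $R$ and $|B_R|$ --- none of which is genuinely hard. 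One could instead run De~Giorgi's original potential-theoretic argument, bounding $g$ pointwise by a Riesz potential of $|\nabla f|\,\mathbf{1}_D$ along segments joining $A$ and $B$ and integrating over $x\in B$; that route is self-contained but does not directly deliver the sharp $\frac{N-1}{N}$-power of the product $|A|\,|B|$, so the coarea route is the more economical one here.
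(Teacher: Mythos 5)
Your argument is correct: the coarea formula applied to the truncation $g=\min\{\max\{f,h_1\},h_2\}$, the relative isoperimetric inequality in $B_R$ (scale-invariant, so the constant depends only on $N$), H\"older's inequality on $D=B_R\cap\{h_1<f<h_2\}$, and the elementary bound $\min\{|A|,|B|\}\ge |A|\,|B|/|B_R|$ combine exactly as you say, and the exponent bookkeeping $|B_R|^{\frac{p-1}{p}-\frac{N-1}{N}}=|B_R|^{\frac1N-\frac1p}=C(N,p)\,R^{1-\frac{N}{p}}$ produces the stated right-hand side with $C=C(N,p)\ge 1$. Note that the paper itself offers no proof of this lemma: it is imported verbatim from Cozzi's paper (and attributed to De Giorgi), so there is no internal argument to compare against; your coarea-plus-relative-isoperimetric route is precisely the standard derivation of this sharp form with the $\frac{N-1}{N}$ power on the product of the two level-set measures, whereas De Giorgi's original Riesz-potential argument, as you observe, gives only the weaker variant with a single measure factor. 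The only points requiring care — the chain rule $\nabla g=\nabla f\,\mathbf{1}_D$ for Sobolev truncations, the validity of the Fleming--Rishel coarea formula for $g\in W^{1,1}(B_R)$, finiteness of the perimeter of a.e.\ superlevel set, and the scale invariance of the isoperimetric constant — are all standard and you have flagged them correctly.
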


We now recall a classical iteration lemma. 

\begin{lemma}{\rm (\cite[Lemma 4.3]{HS}\label{lem-iteration})}
Let $\left\{Y_{j}\right\}_{j\in \Bbb{N}}$ be a sequence of positive
numbers, satisfying the recursive inequalities
\begin{align*}
Y_{j+1} \leq K b^{j} (Y_{j}^{1+\delta_1}+Y_{j}^{1+\delta_2}), ~~ j=0,1,2,
\ldots,
\end{align*}
where $K>0, b>1$ and $\delta_2\ge\delta_1>0$ are given numbers. If
$$
Y_{0} \leq \min \left\{ 1,(2 K)^{-\frac{1}{\delta_{1}}} b^{-\frac{1}{\delta_{1}^{2}}}\right\}
\quad \text{or}\quad
Y_{0} \leq \min \left\{(2 K)^{-\frac{1}{\delta_{1}}} b^{-\frac{1}{\delta_{1}^{2}}},(2 K)^{-\frac{1}{\delta_{2}}} b^{-\frac{1}{\delta_{1} \delta_{2}}-\frac{\delta_{2}-\delta_{1}}{\delta_{2}^{2}}}\right\},
$$
then $Y_{j} \leq 1$ for some $j\in \mathbb{N}$. Moreover,
$$
Y_{j} \leq \min  \left\{1,(2 K)^{-\frac{1}{\delta_{1}}} b^{-\frac{1}{\delta_{1}^{2}}} b^{-\frac{j}{\delta_{1}}}\right\} ~~ \text { for all } j \geq j_{0},
$$
where $j_{0}$ is the smallest $j\in \mathbb{N} \cup\{0\}$ satisfying $Y_{j} \leq 1$. In particular,
$Y_{j}$ converges to zero as $j\rightarrow\infty$.
\end{lemma}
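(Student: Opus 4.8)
\medskip
\noindent\textbf{Proof proposal.}
The statement is a purely numerical \emph{fast geometric convergence} lemma of De Giorgi type, requiring no input from the functional \eqref{q1}, so one could simply quote \cite[Lemma 4.3]{HS}; let me indicate how to obtain it. Abbreviate $a_{1}:=(2K)^{-1/\delta_{1}}b^{-1/\delta_{1}^{2}}$ and $a_{2}:=(2K)^{-1/\delta_{2}}b^{-1/(\delta_{1}\delta_{2})-(\delta_{2}-\delta_{1})/\delta_{2}^{2}}$, so the two hypotheses read $Y_{0}\le\min\{1,a_{1}\}$ and $Y_{0}\le\min\{a_{1},a_{2}\}$. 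The whole mechanism rests on a dichotomy: since the $Y_{j}$ are positive, $Y_{j}^{1+\delta_{2}}\le Y_{j}^{1+\delta_{1}}$ whenever $Y_{j}\le1$, so on $\{Y_{j}\le1\}$ the recursion collapses to the single-exponent form $Y_{j+1}\le 2Kb^{j}Y_{j}^{1+\delta_{1}}$, while on $\{Y_{j}\ge1\}$ it reads $Y_{j+1}\le 2Kb^{j}Y_{j}^{1+\delta_{2}}$. I will repeatedly invoke the elementary scalar fact that $Z_{j+1}\le Cc^{j}Z_{j}^{1+\delta}$ with $Z_{0}\le C^{-1/\delta}c^{-1/\delta^{2}}$ forces $Z_{j}\le C^{-1/\delta}c^{-1/\delta^{2}}c^{-j/\delta}$, which follows by an immediate induction closed off by the identity $C(C^{-1/\delta}c^{-1/\delta^{2}})^{\delta}=c^{-1/\delta}$.

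\emph{Case 1 ($Y_{0}\le\min\{1,a_{1}\}$).} With $\bar a_{1}:=\min\{1,a_{1}\}$ I would prove $Y_{j}\le\bar a_{1}b^{-j/\delta_{1}}$ by induction on $j$: if it holds at $j$, then $Y_{j}\le\bar a_{1}\le1$, hence $Y_{j+1}\le 2Kb^{j}Y_{j}^{1+\delta_{1}}\le 2K\bar a_{1}^{\,1+\delta_{1}}b^{-j/\delta_{1}}$, and $2K\bar a_{1}^{\,\delta_{1}}\le 2Ka_{1}^{\delta_{1}}=b^{-1/\delta_{1}}$ (the defining relation of $a_{1}$) upgrades this to $Y_{j+1}\le\bar a_{1}b^{-(j+1)/\delta_{1}}$. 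Since $\bar a_{1}b^{-j/\delta_{1}}\le\min\{1,a_{1}b^{-j/\delta_{1}}\}$, this already gives the ``moreover'' bound with $j_{0}=0$, hence $Y_{j}\to0$.

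\emph{Case 2 ($Y_{0}\le\min\{a_{1},a_{2}\}$, now $Y_{0}>1$ allowed).} If $Y_{0}\le1$ we are back in Case 1; so assume $Y_{0}>1$ and run a $\delta_{2}$-phase first. While the iterates stay $\ge1$ the recursion is $Y_{i+1}\le 2Kb^{i}Y_{i}^{1+\delta_{2}}$, and $2Ka_{2}^{\delta_{2}}\le b^{-1/\delta_{2}}$ — an inequality that, after computing $2Ka_{2}^{\delta_{2}}$, amounts exactly to $\delta_{1}\le\delta_{2}$ — lets the scalar fact propagate $Y_{i}\le a_{2}b^{-i/\delta_{2}}$ up to the first index $j_{0}$ with $Y_{j_{0}}\le1$; such $j_{0}<\infty$ exists because $a_{2}b^{-i/\delta_{2}}\to0$, which proves the first assertion. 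Next, $Y_{j_{0}-1}>1$ together with $Y_{j_{0}-1}\le a_{2}b^{-(j_{0}-1)/\delta_{2}}$ forces $b^{j_{0}}<a_{2}^{\delta_{2}}b$; combining this with $Y_{j_{0}}\le a_{2}b^{-j_{0}/\delta_{2}}$ and the identity $a_{2}^{\,\delta_{2}/\delta_{1}}b^{\,1/\delta_{1}-1/\delta_{2}}=a_{1}$ (a direct computation from the definitions of $a_{1},a_{2}$) yields $Y_{j_{0}}\le a_{2}b^{-j_{0}/\delta_{2}}<a_{1}b^{-j_{0}/\delta_{1}}$, so $Y_{j_{0}}\le\min\{1,a_{1}b^{-j_{0}/\delta_{1}}\}$. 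Finally, from $j_{0}$ onward the recursion has the Case-1 shape with effective constant $2Kb^{j_{0}}$, whose threshold $(2Kb^{j_{0}})^{-1/\delta_{1}}b^{-1/\delta_{1}^{2}}$ equals $a_{1}b^{-j_{0}/\delta_{1}}$; re-running the Case-1 induction from $j_{0}$ (with $\bar a_{1}$ replaced by $\min\{1,a_{1}b^{-j_{0}/\delta_{1}}\}$) gives $Y_{j}\le\min\{1,a_{1}b^{-j/\delta_{1}}\}$ for all $j\ge j_{0}$, and $Y_{j}\to0$.

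\emph{Where the work sits.} Everything but one step is a monotone induction; the delicate point is the phase-matching in Case 2, i.e. checking that the bound $Y_{j_{0}}\le a_{2}b^{-j_{0}/\delta_{2}}$ supplied by the $\delta_{2}$-phase drops below the $\delta_{1}$-threshold $a_{1}b^{-j_{0}/\delta_{1}}$. This comparison couples the ordering $\delta_{1}\le\delta_{2}$, the a priori bound $b^{j_{0}}<a_{2}^{\delta_{2}}b$ coming from $Y_{j_{0}-1}>1$, and the identity $a_{2}^{\,\delta_{2}/\delta_{1}}b^{\,1/\delta_{1}-1/\delta_{2}}=a_{1}$, and it is precisely what pins down the otherwise opaque exponent $-1/(\delta_{1}\delta_{2})-(\delta_{2}-\delta_{1})/\delta_{2}^{2}$ in the definition of $a_{2}$; once this is in place, the remaining estimates are routine.
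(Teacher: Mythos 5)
Your proposal is correct. The paper itself offers no proof of this lemma --- it is quoted verbatim from \cite[Lemma 4.3]{HS} and used as a black box --- so there is no in-paper argument to compare against; what you have written is a legitimate self-contained derivation. I checked the three computations on which your argument hinges and they all close: (i) the Case-1 closing identity $2Ka_1^{\delta_1}=b^{-1/\delta_1}$ is immediate from the definition of $a_1$; (ii) $2Ka_2^{\delta_2}=b^{-1/\delta_1-(\delta_2-\delta_1)/\delta_2}\le b^{-1/\delta_2}$ reduces to $(\delta_2-\delta_1)\bigl(\tfrac{1}{\delta_1\delta_2}+\tfrac{1}{\delta_2}\bigr)\ge 0$, i.e.\ exactly to $\delta_1\le\delta_2$ as you claim; (iii) $a_2^{\delta_2/\delta_1}b^{(\delta_2-\delta_1)/(\delta_1\delta_2)}=(2K)^{-1/\delta_1}b^{-1/\delta_1^2}=a_1$, which combined with $b^{j_0}<a_2^{\delta_2}b$ gives the phase-matching inequality $a_2b^{-j_0/\delta_2}<a_1b^{-j_0/\delta_1}$, and the effective threshold $(2Kb^{j_0})^{-1/\delta_1}b^{-1/\delta_1^2}=a_1b^{-j_0/\delta_1}$ lets the Case-1 induction restart at $j_0$. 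The only presentational caveat is that in the $\delta_2$-phase you invoke the ``scalar fact'' with $a_2$ in place of its nominal threshold $(2K)^{-1/\delta_2}b^{-1/\delta_2^2}$; since the induction only needs the one-step inequality $2Ka_2^{\delta_2}\le b^{-1/\delta_2}$ (which you verify), this is harmless, but it is worth stating the induction directly rather than citing the scalar fact, whose hypothesis is not literally met.
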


We end the subsection with a technical lemma that plays an important role in the forthcoming context.

\begin{lemma}\label{lem-a}
Let $p,q,s$ satisfy \eqref{exp} and \eqref{pqs}.
Then there is a positive constant $C=C(N, p, q, s)$ such that for each $f \in W^{1, p}\left(B_r\right)$ and any $L_0>0$, we have
\begin{align*}
\dashint_{B_r}\left|\frac{f}{r}\right|^p+L_0 \left|\frac{f}{r^ s}\right|^q d x
& \leq C L_0 r^{(1-s) q}
\left(\dashint_{B_r}|D f|^p d x\right)^{\frac{q}{p}}+C \left(\frac{|\operatorname{supp} f|}{\left|B_r\right|}\right)^{\sigma}\dashint_{B_r}|D f|^p d x \nonumber\\
& +C\left(\frac{|\operatorname{supp} f|}{\left|B_r\right|}\right)^{p-1} \dashint_{B_r}\left|\frac{f}{r}\right|^p+L_0\left|\frac{f}{r^s}\right|^q d x
\end{align*}
with
$$
\sigma:= \begin{cases}\frac{p}{N} & \text { for } 1<p<N,
\\ 1-\frac{p}{q} & \text { for } p \geq N.\end{cases}
$$
\end{lemma}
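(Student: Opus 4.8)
The plan is to estimate the left-hand side by splitting the integral over $B_r$ according to whether $|f|$ is large or small compared with a threshold, and then to interpolate the $q$-term against the $p$-term using the Sobolev embedding at our disposal. First I would fix the natural scaling and reduce, by replacing $f$ with $f/r$ (or arguing on the unit ball and scaling back), to controlling $\dashint_{B_r}|f/r|^p + L_0|f/r^s|^q\,dx$; the factors $r^{-p}$, $r^{-sq}$ and $r^{(1-s)q}$ in the statement are exactly what this scaling produces. The key structural observation is that on the support of $f$ one has the pointwise bound coming from the elementary inequality $t^q \le \varepsilon\, t^{q^{(\cdot)}} + C_\varepsilon t^p$ only on sublevel sets, so instead I would handle the $q$-term by writing, with $u^*$ denoting the relevant critical exponent,
\[
\dashint_{B_r}\left|\frac{f}{r^s}\right|^q dx \le \left(\frac{|\operatorname{supp} f|}{|B_r|}\right)^{1-q/q^*}\left(\dashint_{B_r}\left|\frac{f}{r^s}\right|^{q^*}dx\right)^{q/q^*}
\]
by Hölder, and then bound the last factor using the fractional or classical Sobolev–Gagliardo–Nirenberg inequality. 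The condition \eqref{pqs}, i.e. $q/p \le 1 + p/(N-p) = p^*/p$ when $p<N$, is precisely what guarantees that $q \le p^*$, so that $\|f/r^s\|_{L^{q}} \lesssim \|f/r^s\|_{L^{p^*}}^{\theta}\|\cdots\|^{1-\theta}$ or more directly $\|f\|_{L^q(B_r)} \lesssim r^{\,\text{(power)}}\|Df\|_{L^p(B_r)}$ after accounting for scaling; the exponent bookkeeping here is what produces the factor $r^{(1-s)q}(\dashint|Df|^p)^{q/p}$ together with the measure factor raised to $\sigma = p/N$.

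Concretely, the steps in order: (1) scale to $r=1$; (2) for the $p$-term, use Sobolev–Poincaré on $B_1$ together with the fact that $f$ is supported on a set of relative measure $|\operatorname{supp}f|/|B_1|$, giving $\dashint_{B_1}|f|^p \le C(|\operatorname{supp}f|/|B_1|)^{p/N}\dashint_{B_1}|Df|^p$ when $p<N$ (this is Sobolev + Hölder on the support), contributing the $\sigma$-term; when $p\ge N$ one uses instead the embedding into $L^q$ directly with the Hölder factor $(|\operatorname{supp}f|/|B_1|)^{1-p/q}$, explaining the two cases in the definition of $\sigma$; (3) for the $q$-term, first discard a piece where $|f|\le 1$ (say), which is absorbed into the last term $C(|\operatorname{supp}f|/|B_1|)^{p-1}\dashint|f/r|^p + L_0|f/r^s|^q$ after noting $|f|^q \le |f|^p$ there up to constants — actually more carefully, the last term on the right is an \emph{absorbable} term, so one wants to arrange that the ``bad'' part of the $q$-integral carries a small power of the support measure times the full left-hand side; (4) on the complementary piece where $|f|$ is comparable to or larger than the threshold, apply Hölder with exponents $q^*/q$ and its conjugate to pull out the measure factor, then Sobolev to replace $L^{q^*}$ by $L^p$ of the gradient, picking up $L_0 r^{(1-s)q}(\dashint|Df|^p)^{q/p}$; (5) undo the scaling and collect terms. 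Throughout, the constant $C$ depends only on $N,p,q,s$ as claimed, since it comes only from Sobolev/Poincaré constants and elementary inequalities.

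The main obstacle I anticipate is the careful bookkeeping of exponents and scaling powers so that exactly $r^{(1-s)q}$ appears in front of $(\dashint|Df|^p)^{q/p}$ and exactly $\sigma$ and $p-1$ appear as the powers of the relative support measure — in particular verifying that the borderline case $q/p = p^*/p$ (equality in \eqref{pqs}) still works, which is where one must be sure to use the endpoint Sobolev embedding $W^{1,p}\hookrightarrow L^{p^*}$ rather than a subcritical one, and then feed in the extra Hölder factor on the support to create the small parameter $(|\operatorname{supp}f|/|B_r|)^{\sigma}$. A secondary subtlety is ensuring the last term on the right genuinely appears with a \emph{positive} power $p-1$ of the support measure so that it can later be absorbed in applications; this forces the threshold in the large/small decomposition of $|f|$ to be chosen proportional to an average of $|f|$ rather than an absolute constant, and one has to check the resulting terms close up correctly. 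Once the decomposition and the choice of threshold are fixed, the remaining estimates are routine applications of Hölder's inequality and the Sobolev inequality.
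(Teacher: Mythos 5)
There is a genuine gap in your plan, and it sits exactly at the point you flagged as a ``secondary subtlety.'' In steps (2)--(4) you bound $\dashint_{B_r}|f|^p$ and $\big(\dashint_{B_r}|f|^{q^*}\big)^{q/q^*}$ directly by $\dashint_{B_r}|Df|^p$ via ``Sobolev + H\"older on the support,'' e.g.\ $\dashint_{B_1}|f|^p \le C\,(|\operatorname{supp}f|/|B_1|)^{p/N}\dashint_{B_1}|Df|^p$. This inequality is false as stated: $f$ is only in $W^{1,p}(B_r)$, not in $W^{1,p}_0(B_r)$, and $\operatorname{supp}f$ can be (nearly) all of $B_r$ — take $f$ equal to a nonzero constant, for which the right-hand side vanishes while the left does not. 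A version with a constant depending on $|\{f=0\}|/|B_r|$ exists, but that constant blows up precisely in the regime relevant to the application (truncations $(u-k)_+$ of the minimizer, which need not vanish near $\partial B_r$ nor on a large set), so it cannot give a constant $C=C(N,p,q,s)$. The same objection applies to your $q$-term, where you propose to bound $\|f\|_{L^{q^*}(B_r)}$ by $\|Df\|_{L^p(B_r)}$.

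The paper's proof avoids this by splitting off the mean: it writes $f=(f-(f)_{B_r})+(f)_{B_r}$, applies the Sobolev--Poincar\'e inequality (with exponent $p^*$ for $p<N$ and $q$ for $p\ge N$) to the oscillation — inserting $\chi_{\{f\neq 0\}}$ before H\"older only in the $p$-term, which is where $\sigma$ comes from — and then estimates $|(f)_{B_r}|^p$ and $L_0|(f)_{B_r}|^q$ by H\"older over $\operatorname{supp}f$, which is exactly what produces the third right-hand-side term with the factor $(|\operatorname{supp}f|/|B_r|)^{p-1}$ (using $q-1\ge p-1$ and $|\operatorname{supp}f|\le|B_r|$). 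Your alternative device — a large/small threshold decomposition of $|f|$, with the threshold ``proportional to an average of $|f|$'' — is never carried out, and as sketched it does not generate the $(p-1)$ power of the relative support measure; in effect it is an unexecuted rediscovery of the mean-splitting. So the missing idea is precisely the Poincar\'e-type treatment of $f-(f)_{B_r}$ together with the separate H\"older estimate of the average; without it the central inequalities in your steps (2)--(4) fail.
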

\begin{proof}
[\bf {Proof.}]
 Applying the H\"{o}lder inequality and the Sobolev-Poincar\'{e} inequality, we can see
$$
\begin{aligned}
\dashint_{B_r}\left|\frac{f}{r^s}\right|^q\, d x
& \leq C \dashint_{B_r}\left|\frac{f-(f)_{B_r}}{r^s}\right|^q d x+C\left|\frac{(f)_{B r}}{r^s}\right|^q \\
& \leq C\left(\dashint_{B_r}\left|\frac{f-(f)_{B_r}}{r^s}\right|^{p_q^*
} \,d x\right)^{\frac{q}{p_q^*
}}
+C\left|\frac{(f)_{B_r}}{r^s}\right|^q \\
& \leq C r^{(1-s) q}\left(\dashint_{B_r}|D f|^p \,d x\right)^{\frac{q}{p}}+C\left|\frac{(f)_{B_r}}{r^s}\right|^q,
\end{aligned}
$$
where
$$
p_q^*= \begin{cases}p^* & \text { for } 1<p<N,
\\ q& \text { for } p \geq N.\end{cases}
$$
In a similar way, we get
$$
\begin{aligned}
\dashint_{B_r}\left|\frac{f}{r}\right|^p d x
& \leq C \dashint_{B_r}\left|\frac{f-(f)_{B_r}}{r}\right|^p \chi_{\{f\neq0\}}\,dx+C\left|\frac{(f)_{B r}}{r}\right|^p \\
& \leq C\left(\frac{|\operatorname{supp} f|}{\left|B_r\right|}\right)^{\sigma}
\left(\dashint_{B_r}\left|\frac{f-(f)_{B_r}}{r}\right|^{p_q^*} d x\right)^{\frac{p}{p_q^*}}+C\left|\frac{(f)_{B_r}}{r}\right|^p \\
& \leq C \left(\frac{|\operatorname{supp} f|}{\left|B_r\right|}\right)^{\sigma}\dashint_{B_r}|D f|^p d x
+C\left|\frac{(f)_{B_r}}{r}\right|^p.
\end{aligned}
$$
It follows from the H\"{o}lder inequality that
\begin{align*}
L_0\left|\frac{(f)_{B_r}}{r^s}\right|^q+\left|\frac{(f)_{B_r}}{r}\right|^p
 & \leq L_0 r^{-s q}\left(\frac{|\operatorname{supp} f|}{\left|B_r\right|}\right)^{q-1} \dashint_{B_r}|f|^q d x
 +r^{-p}\left(\frac{|\operatorname{supp} f|}{\left|B_r\right|}\right)^{p-1} \int_{B_r}|f|^p d x \nonumber\\
& \leq\left(\frac{|\operatorname{supp} f|}{\left|B_r\right|}\right)^{p-1} \dashint\left|\frac{f}{r}\right|^p+L_0\left|\frac{f}{r^s}\right|^q\,dx.
\end{align*}
A combination of the above estimates results in the claim.
\end{proof}


\section{Caccioppoli estimates}

This section is devoted to establishing the Caccioppoli estimates that encodes all the information needed to show (local) regularity of minimizers including boundedness, H\"{o}lder continuity and Harnack estimates.

\begin{lemma} [\label{lemCa}Caccioppoli inequality]
Let $B_{2R}\equiv B_{2R}(x_0)\subset\subset\Omega$. Suppose that $u\in \mathcal{A}(\Omega)$ is a minimizer to \eqref{q1}. Then there exists a positive constant $C$ only depending on $s,p,q,\Lambda,N$ such that for any $0<\rho<r\le R$,
\begin{align*}
&\quad \int_{B_{\rho}} \left|D w_{\pm}\right|^p d x+\int_{B_{\rho}} \int_{B_{\rho}}  a(x,y)\left|w_{\pm}(x)-w_{\pm}(y)\right|^q d\mu+
\int_{B_{\rho}}w_{\pm}(x) \Big(\int_{ \Bbb{R}^N }\frac{a(x,y)w^{q-1}_{\mp}(y)}{|x-y|^{N+sq}} d y\Big)  d x\nonumber\\
&\leq \frac{C}{ (r-\rho)^{p}  }\int_{B_{r}}  w_{\pm} ^p d x
+\frac{C}{(r-\rho)^q}\int_{B_r}\int_{B_{r}}a(x,y)\frac{|w_\pm(x)+w_\pm(y)|^q}{|x-y|^{N+(s-1)q} }\,dxdy \nonumber\\
&\quad+
\frac{Cr^{N+s q}}{(r-\rho)^{N+s q}}
\int_{B_r} \int_{\mathbb{R}^N \backslash  B_{\rho}}a(x,y)\frac{w_{ \pm}^{q-1}(x) w_{ \pm}(y)}{\left|x-x_0\right|^{N+s q}} d x d y
+C\int_{B_{r}}|f|w_\pm\,dx,
\end{align*}
where $w_\pm=(u-k)_{\pm}$ with a level $k\in\Bbb{R}$.
\end{lemma}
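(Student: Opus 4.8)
The plan is to exploit the minimality of $u$ by comparing $\mathcal{E}(u;\Omega)$ with the energy of a suitable competitor that coincides with $u$ outside a ball. Fix $0<\rho<r\le R$ and pick a cutoff $\eta\in C_c^\infty(B_{(r+\rho)/2})$ with $\eta\equiv 1$ on $B_\rho$, $0\le\eta\le1$ and $|D\eta|\lesssim (r-\rho)^{-1}$. Treating the $+$ case (the $-$ case being symmetric under $u\mapsto -u$, $k\mapsto -k$), I set the test function $v:=u-\eta^q w_+$ where $w_+=(u-k)_+$, so that $v=u$ in $\mathbb{R}^N\setminus B_{(r+\rho)/2}$ and $v\le u$ everywhere. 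Minimality gives
\begin{align*}
\int_\Omega F(x,Dv)\,dx+\iint_{\mathcal{C}_\Omega}|v(x)-v(y)|^qK_{sq}\,dxdy
\ge \int_\Omega F(x,Du)\,dx+\iint_{\mathcal{C}_\Omega}|u(x)-u(y)|^qK_{sq}\,dxdy
\end{align*}
plus the lower-order discrepancy $\int_\Omega f(x)(v-u)\,dx$, which produces the term $C\int_{B_r}|f|w_+\,dx$ after noting $|v-u|=\eta^q w_+\le w_+$.

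For the local part I would rearrange to bound $\int(F(x,Du)-F(x,Dv))\,dx$ from above using \eqref{F}: on $\{u\le k\}$ one has $Dv=Du$ so there is no contribution, and on $\{u>k\}$ we have $Dv=Du-\eta^q Dw_+-q\eta^{q-1}w_+D\eta=(1-\eta^q)Dw_+-q\eta^{q-1}w_+D\eta$. The standard convexity/Young argument then yields $\Lambda^{-1}\int_{B_\rho}|Dw_+|^p\,dx$ on the left and, on the right, terms controlled by $(r-\rho)^{-p}\int_{B_r}w_+^p\,dx$ (from the $|D\eta|^p w_+^p$ piece) after absorbing a small multiple of $\int|Dw_+|^p$; this is the classical Caccioppoli manipulation and accounts for the first term on the right-hand side.

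The nonlocal part is the crux. Writing $U(x,y):=|u(x)-u(y)|^q-|v(x)-v(y)|^q$, I need a pointwise estimate for $U(x,y)$ in terms of $w_+$ and $\eta$. Splitting the domain of the double integral into $(x,y)\in B_r\times B_r$ and the mixed region where one variable leaves $B_r$: on $B_r\times B_r$ one uses the algebraic inequality (valid since $v(x)-v(y)=(u(x)-u(y))-(\eta^q(x)w_+(x)-\eta^q(y)w_+(y))$ and the standard discrete-product estimates, cf. the fractional $p$-Laplacian literature) to extract a positive term $c\int_{B_\rho}\int_{B_\rho}a(x,y)|w_+(x)-w_+(y)|^q\,d\mu$ and to leave the error term controlled by $(r-\rho)^{-q}\int_{B_r}\int_{B_r}a(x,y)|w_+(x)+w_+(y)|^q|x-y|^{-N-(s-1)q}\,dxdy$ — here one replaces $|\eta^q(x)-\eta^q(y)|$ by $|D\eta|_\infty|x-y|\le (r-\rho)^{-1}|x-y|$, which shifts the exponent of $|x-y|$ from $N+sq$ to $N+(s-1)q$, matching the second right-hand term. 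In the mixed region where, say, $y\notin B_r$ but $x\in B_{(r+\rho)/2}$, one has $v(y)=u(y)$ and $v(x)-v(y)=u(x)-u(y)-\eta^q(x)w_+(x)$; convexity of $t\mapsto|t|^q$ gives $|u(x)-u(y)|^q-|v(x)-v(y)|^q\ge q|v(x)-v(y)|^{q-2}(v(x)-v(y))\cdot\eta^q(x)w_+(x)$, and one separates this into the genuinely nonlocal positive contribution $\int_{B_\rho}w_+(x)\big(\int_{\mathbb{R}^N}a(x,y)w_-^{q-1}(y)|x-y|^{-N-sq}dy\big)dx$ (coming from the region $u(y)<k$, where $v(x)-v(y)$ picks up $w_-(y)$) together with a tail error dominated, after comparing $|x-y|$ with $|x-x_0|$ for $y$ far from $B_\rho$ (using $|x-y|\ge c\,(r-\rho)r^{-1}|x-x_0|$ for $x\in B_{(r+\rho)/2}$, $y\notin B_r$), by $\frac{r^{N+sq}}{(r-\rho)^{N+sq}}\int_{B_r}\int_{\mathbb{R}^N\setminus B_\rho}a(x,y)w_+^{q-1}(x)w_+(y)|x-x_0|^{-N-sq}\,dxdy$.

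The main obstacle I expect is the bookkeeping in the nonlocal estimate: correctly isolating the three distinct nonlocal contributions — the ``good'' Gagliardo energy of $w_\pm$ on $B_\rho$, the cross term with $w_\mp$ that appears as the third term on the left, and the two error terms (the near-diagonal one with the shifted exponent and the long-range tail one with the $r/(r-\rho)$ blow-up) — while keeping track of signs so that the positive quantities genuinely land on the left-hand side. The geometric comparison $|x-y|\approx|x-x_0|$ for $x$ near $x_0$ and $y$ outside $B_r$, and the algebraic inequality $|a|^q-|b|^q\ge q|b|^{q-2}b(a-b)+c_q|a-b|^q$ (or its one-sided variants for the mixed terms), are the two technical ingredients that make the splitting work; assembling them and then absorbing the small local and nonlocal error terms into the left-hand side via Young's inequality is routine once the correct decomposition is in place.
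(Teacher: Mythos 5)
Your skeleton is the right one and matches the paper's: compare $\mathcal{E}(u)$ with the competitor $u-(\text{cutoff})\,w_+$, estimate the local difference as in \cite{By22}, and decompose the nonlocal difference as in \cite{Co} into the good Gagliardo term on $B_\rho$, the cross term with $w_-$, the near-diagonal error with the shifted exponent $N+(s-1)q$, and the long-range tail with the $(r/(r-\rho))^{N+sq}$ factor. The treatment of $I_4$ and the sign bookkeeping are also fine.

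However, there is a genuine gap at the absorption step. You claim that after the decomposition one absorbs ``a small multiple of $\int|Dw_+|^p$'' (and the analogous nonlocal error) into the left-hand side by Young's inequality. In this problem $u$ is a \emph{minimizer} and $F$ is only a Carath\'eodory function satisfying the growth bound \eqref{F}; no convexity or differentiability is assumed, so there is no Euler--Lagrange weak formulation into which you can insert $\eta^q w_+$ and gain a small parameter (that device works for weak solutions of fractional $p$-Laplace-type equations, not here). In the energy-comparison argument the annulus contributions
\begin{align*}
\int_{B_{r_1}\setminus B_{\rho_1}}|Dw_+|^p\,dx
\qquad\text{and}\qquad
\iint_{(B_{r_1}\times B_{r_1})\setminus(B_{\rho_1}\times B_{\rho_1})}a(x,y)|w_+(x)-w_+(y)|^q\,d\mu
\end{align*}
appear on the right with a constant of order $\Lambda$ (coming from the upper bound $F(x,Dv)\le\Lambda|Dv|^p$ on the set where the cutoff is strictly between $0$ and $1$), while the good term retained on the left carries only $\Lambda^{-1}$. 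There is no small factor in front of these annulus terms, so they cannot be absorbed by Young's inequality; as stated, your final step fails. The paper closes this gap with a hole-filling iteration: it introduces $\Phi(t)$ (the sum of the three good quantities over $B_t$), derives $\Phi(\rho_1)\le C\bigl(\Phi(r_1)-\Phi(\rho_1)\bigr)+(\text{explicit terms with powers of }(r_1-\rho_1)^{-1})$ for all intermediate radii $\rho\le\rho_1<r_1\le r$, and then applies the technical iteration lemma \cite[Lemma 2.5]{Fi22} to pass from this self-improving inequality to the stated Caccioppoli estimate on the pair $(\rho,r)$. Incorporating this step (or an equivalent Giaquinta--Giusti-type iteration over intermediate radii) is essential to complete your argument.
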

\begin{proof}[\bf Proof.]
We only give the sketch of proof, because that is similar to \cite[Proposition 7.5]{Co}. Choose any $\rho_1,r_1$ satisfying $\rho\le \rho_1<r_1\le r$. Let $\phi \in C_0^{\infty}\left(B_{(r_1+\rho_1) / 2}\right)$ be a cut-off function satisfying $0 \leq \phi \leq 1$, $\phi\equiv 1$ in $B_{\rho_1}$ and $|\nabla \phi| \leq 4 /(r_1-\rho_1)$. Using $v = u-w_+\phi$ as a test function in \eqref{q1}, we can see
\begin{align}\label{c1}
& 0\leq \int_{\Omega}\big(F(x,D v)-F(x,D u)\big) d x+ \int_{B_{r_1}}\int_{B_{r_1}}a(x,y)
\big(|v(x)-v(y)|^q-|u(x)-u(y)|^q\big) d \mu  \nonumber\\
&\quad+2 \int_{\mathbb{R}^{N} \backslash B_{r_1}} \int_{B_{r_1}}a(x,y)
\big(|v(x)-v(y)|^q-|u(x)-u(y)|^q\big)  d \mu+\int_{B_{r_1}}(v-u)f(x) d x \nonumber\\
&=:I_{1}+I_{2}+I_{3}+I_{4}.
\end{align}
We estimate terms $I_1$, $I_2$, $I_3$ and $I_4$ in \eqref{c1} separately. For $I_1$, we obtain
\begin{align*}
I_1\leq C \int_{B_{r_1} \backslash B_{\rho_1}}\left|D w_{+}\right|^p d x+C \int_{B_{r}} \left|\frac{w_{+}}{r_1-\rho_1}\right|^p d x-\Lambda^{-1} \int_{B_{r_1}} \left|D w_{+}\right|^p d x.
\end{align*}
The details can be found in \cite[Page 7]{By22}. Following the computations in \cite[P4819--4821]{Co}, we know that
\begin{align*}
I_2+I_3& \leq-\frac{1}{C} \int_{B_{\rho_1}} \int_{B_{\rho_1}}a(x,y)\left|w_{+}(x)-w_{+}(y)\right|^q d \mu  \nonumber\\
&\quad-\frac{1}{C} \int_{B_{\rho_1}} w_{+}(x)\left(\int_{\Bbb{R}^N} \frac{a(x,y)w^{q-1}_{-}(y)}{|x-y|^{N+s q}} d y\right) d x   \nonumber\\
&\quad+C\iint_{(B_{r_1}\times B_{r_1})\backslash (B_{\rho_1}\times B_{\rho_1})}a(x,y)\left|w_{+}(x)-w_{+}(y)\right|^q d \mu   \nonumber\\
&\quad+\frac{C}{(r_1-\rho_1)^q}\int_{B_{r_1}}\int_{B_{r_1}}a(x,y)\frac{|w_+(x)+w_+(y)|^q}{|x-y|^{N+(s-1)q}}\,dxdy   \nonumber\\
&\quad+\frac{C r^{N+s q}}{(r_1-\rho_1)^{N+s q}}
 \int_{B_{r}} \int_{\mathbb{R}^N \backslash  B_{\rho}}\frac{  a(x,y)w_{+}^{q-1}(x) w_{+}(y)}{\left|x-x_0\right|^{N+s q}} d x d y
\end{align*}
with some universal $C\ge1$. For the term $I_4$, it is easy to see
\begin{align*}
(v-u)f(x)=\phi(x)w_+(x)f(x),
\end{align*}
which directly gives the estimate of $I_4$ as below,
\begin{align*}
I_4\le \int_{B_{r}}w_+(x)|f(x)|dx.
\end{align*}

We can conclude from the estimates of $I_1$--$I_4$ that
\begin{align}\label{i1-i4}
 &\int_{B_{\rho_1}} \int_{B_{\rho_1}} a(x,y) \left|w_{+}(x)-w_{+}(y)\right|^q d\mu
+\int_{B_{\rho_1}} \left|D w_{+}\right|^p d x \nonumber\\
&\quad +  \int_{B_{\rho_1}} w_{+}(x)\left(\int_{ \Bbb{R}^N } \frac{ a(x,y)w^{q-1}_{-}(y) }{|x-y|^{N+s q}} d y\right) d x\nonumber\\
\leq & C\bigg(\int_{B_{r_1} \backslash B_{\rho_1}} \int_{B_{r_1} \backslash B_{\rho_1}}a(x,y) \left|w_{+}(x)-w_{+}(y)\right|^q d\mu
+\int_{B_{r_1} \backslash B_{\rho_1}} \left|D w_{+}\right|^p d x\nonumber\\
& \quad+\frac{1}{(r_1-\rho_1)^q}\int_{B_{r_1}}\int_{B_{r_1}}a(x,y)\frac{|w_+(x)+w_+(y)|^q}{|x-y|^{N+(s-1)q}}\,dxdy 
+\frac{1}{(r_1-\rho_1)^p}\int_{B_{r}}w_{+}^p d x   \nonumber\\
& \quad+ \frac{r^{N+s q}}{(r_1-\rho_1)^{N+s q}}
 \int_{B_r} \int_{\mathbb{R}^N \backslash  B_{\rho}}\frac{  a(x,y)w_{+}^{q-1}(x) w_{+}(y)}{\left|x-x_0\right|^{N+s q}} d x d y+\int_{B_{r}}w_+(x)|f(x)|dx\bigg).
\end{align}
Let us define $\Phi(t)$ as below,
$$
\begin{aligned}
\Phi(t)= & \int_{B_{t}} \int_{B_{t}} a(x,y) \left|w_{+}(x)-w_{+}(y)\right|^q d\mu
+\int_{B_{t}} \left|D w_{+}\right|^p d x \nonumber\\
&\quad +  \int_{B_{t}} w_{+}(x)\left(\int_{  \Bbb{R}^N } \frac{a(x,y)w^{q-1}_{-}(y)}{|x-y|^{N+s q}} d y\right) d x,\qquad t>0.
\end{aligned}
$$
Then it follows by \eqref{i1-i4} that
$$
\begin{aligned}
\Phi\left(\rho_1\right) & \leq C\big(\Phi\left(r_1\right)-\Phi\left(\rho_1\right)\big)
 +\frac{C}{(r_1-\rho_1)^q}\int_{B_{r_1}}\int_{B_{r_1}}a(x,y)\frac{|w_+(x)+w_+(y)|^q}{|x-y|^{N+(s-1)q}}\,dxdy\\
& \quad+\frac{C}{(r_1-\rho_1)^p}\int_{B_{r}}w_{+}^p d x + \frac{Cr^{N+s q}}{(r_1-\rho_1)^{N+s q}}
 \int_{B_r} \int_{\mathbb{R}^N \backslash  B_{\rho}}\frac{  a(x,y)w_{+}^{q-1}(x) w_{+}(y)}{\left|x-x_0\right|^{N+s q}} d x d y\\
 & \quad+C\int_{B_{r}}w_+(x)|f(x)|dx
\end{aligned}
$$
with $C=C(N, s, p, q, \Lambda)$.
This allows us to apply the the technical lemma \cite[Lemma 2.5]{Fi22} to arrive at the desired estimate.
\end{proof}

\section{Local boundedness}
This section is devoted to obtaining, by Caccioppoli inequality, the local boundedness of the minimizers to \eqref{q1}, Theorems \ref{th1} and \ref{th2}.

For arbitrarily fixed center $x_0\in\Omega$ and radius $r\in(0,1)$ satisfying $B_{2r}\equiv B_{2r}(x_0)\subset\subset \Omega$,
let us take a decreasing sequence
\begin{align}\label{r}
r_{i}=2^{-1} r+2^{-i-1}r, \quad i=0,1,2,\ldots.
\end{align}
The balls $B_i$ are chosen as 
\begin{align}\label{Bj}
B_{i}=B_{r_i}(x_0), \quad i=0,1,2,\ldots.
\end{align}

We define sequences of increasing levels and the corresponding functions as below:
\begin{align}\label{w}
&k_{i}=\left(1-2^{-i}\right) \bar{k} \quad  \text{with } \bar{k}>0, \quad w_{i}=\left(u-k_{i}\right)_{+},
 ~~ i=0,1,2,
\ldots.
\end{align}
Note that
$$r_i-r_{i+1}=2^{-i-2} r, \quad k_{i+1}-k_i=2^{-i-1} \bar{k} \quad \text {and}\quad  w_{i+1} \leq w_i,$$
which shall be used many times in the sequel.
In the two coming lemmas, we deal with the Caccioppoli
inequality written for the function $w_{i}$
over the domains $B_{i+1}$ and $B_i$.

\begin{lemma}\label{lem3.2}
Suppose that $u$ is a minimizer to \eqref{q1} with $K_{sq}$ and $F$ satisfying \eqref{F}--\eqref{ca} with $a_0>0$, and the function $f|_{B_r}$ belongs to $L^{\gamma}(B_{r})$
with $\gamma>\frac{q}{q-1}$. 
Let the notations $B_i$ and $w_i$ be given in \eqref{Bj}--\eqref{w} respectively. Then we have for
all $i\in \Bbb{N}$ that
\begin{align*}
\dashint_{B_{i+1}}H_{r}(w_{i+1}) dx
& \leq \frac{C 2^{i(N+q)}  }{\big(H_{r}\left(k_{i+1}-k_i\right)\big)^{\frac{1}{\kappa^{\prime}}}}
\left(\dashint_{B_i} H_{r}(w_{i}) d x\right)^{1+\frac{1}{\kappa^{\prime}}}
\Big(1+\frac{\overline{\operatorname{Tail}} (u_+;x_0,r/2)}{h_{r}( k_{i+1}-k_i)}\Big)
\nonumber\\
&\quad+ \frac{C 2^{i(N+q)}  }{\big(H_{r}\left(k_{i+1}-k_i\right)\big)^{\frac{1}{\kappa^{\prime}}}}
\left(\dashint_{B_i} H_{r}(w_{i}) d x\right)^{\frac{1}{\gamma'}+\frac{1}{\kappa^{\prime}}}
\frac{ d  }{g_{r}( k_{i+1}-k_i)},
\end{align*}
where $\kappa:=p^*/p$, $\frac{1}{\kappa}+\frac{1}{\kappa'}=1$, $d:=\Big(\dashint_{B_{r}}|f(x)|^\gamma \,dx\Big)^{\frac{1}{\gamma}}$,  the functions $H_r(\cdot), g_r(\cdot)$ are determined by \eqref{ha}--\eqref{Ha}, and
$$
\overline{\operatorname{Tail}} (u_+;x_0,r/2):=\int_{\mathbb{R}^N \backslash  B_{r/2}(x_0)}  \frac{u_+^{q-1}(x)}{\left|x-x_0\right|^{N+s q}} \,d x d y.
$$
\end{lemma}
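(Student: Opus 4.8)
The plan is to apply the Caccioppoli inequality of Lemma~\ref{lemCa} with the level $k=k_{i+1}$, on the pair of radii $\rho=r_{i+1}$ and $r=r_i$, so that $w_\pm$ there becomes $w_{i+1}=(u-k_{i+1})_+$ and $r_i-r_{i+1}=2^{-i-2}r$. This produces, on the left-hand side, the full energy $\int_{B_{i+1}}|Dw_{i+1}|^p\,dx+\int_{B_{i+1}}\int_{B_{i+1}}a|w_{i+1}(x)-w_{i+1}(y)|^q\,d\mu$; on the right-hand side the terms carry the factor $2^{i(\cdot)}r^{-\cdot}$ coming from $(r_i-r_{i+1})^{-1}$, the polynomial tail integral $\int_{B_r}\int_{\mathbb{R}^N\setminus B_\rho}a\,w_{i+1}^{q-1}(x)w_{i+1}(y)|x-x_0|^{-N-sq}$, and the source term $\int_{B_{i}}|f|w_{i+1}$. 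Since $a_0>0$ we may replace $a$ by comparable constants everywhere, and the $(s-1)q$-kernel term $\int\int a|w_{i+1}(x)+w_{i+1}(y)|^q|x-x_0|^{-N-(s-1)q}$ is absorbed, using $|x-y|<r$ on $B_r\times B_r$, into $r^{(1-s)q}\int_{B_r}w_{i+1}^q\,dx$ after a standard argument (this is exactly the mechanism behind the function $H_r$).

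The next step is the algebraic passage from levels to the functions $h_r,g_r,H_r$. One observes that on $A^+(k_{i+1},r_i)=B_i\cap\{u>k_{i+1}\}$ one has $w_i\ge k_{i+1}-k_i$, hence
\[
|A^+(k_{i+1},r_i)|\,H_r(k_{i+1}-k_i)\le \int_{B_i}H_r(w_i)\,dx,
\]
which converts a smallness-of-measure factor $|A^+(k_{i+1},r_i)|/|B_i|$ into a factor $\big(\dashint_{B_i}H_r(w_i)\big)/H_r(k_{i+1}-k_i)$; this is where the extra exponent $1/\kappa'$ and the gain $(\int H_r(w_i))^{1+1/\kappa'}$ will come from once Sobolev embedding is applied. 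For the source term one writes $\int_{B_i}|f|w_{i+1}\le \|f\|_{L^\gamma(B_r)}\,|A^+|^{1-1/\gamma-1/(\text{Sobolev exponent})}\,\|w_{i+1}\|_{L^{q_s^*}}$-type Hölder splitting, which after normalizing by $r$ yields the factor $d/g_r(k_{i+1}-k_i)$ together with the exponent $1/\gamma'$; the tail term is handled by monotonicity $w_{i+1}\le u_+$ and the elementary bound $|x-x_0|^{-N-sq}\ge C r^{-N-sq}$ on $\mathbb{R}^N\setminus B_{r/2}$ valid since $r_i\ge r/2$, giving the factor $\overline{\operatorname{Tail}}(u_+;x_0,r/2)/h_r(k_{i+1}-k_i)$.

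The final step is the Sobolev/Poincaré upgrade from the energy to $\dashint_{B_{i+1}}H_r(w_{i+1})$: one applies the Sobolev inequality in $W^{1,p}$ (exponent $p^*$, whence $\kappa=p^*/p$) to the local part and Lemma~\ref{lem-Poincare} to the nonlocal part, and then invokes the crucial Lemma~\ref{lem-a} (valid precisely under \eqref{pqs}) to balance the $|Dw_{i+1}|^p$ energy and the $r^{(1-s)q}(\int|Dw_{i+1}|^p)^{q/p}$ term against $\dashint H_r(w_{i+1})$, picking up the measure factor $(|\operatorname{supp}w_{i+1}|/|B_{i+1}|)^{\sigma}$ that supplies the self-improving exponent. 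Combining all the bounds and collecting the powers of $2^i$ into $2^{i(N+q)}$ yields the stated recursion. The main obstacle is bookkeeping: tracking how the Hölder exponents from the $f$-term and the tail term interact with the Sobolev exponent $p^*$ (resp.\ $q_s^*$) and with the measure gain $\sigma$ so that everything collapses into the clean exponents $1+\tfrac1{\kappa'}$, $\tfrac1{\gamma'}+\tfrac1{\kappa'}$, and into the functions $h_r,g_r,H_r$; the role of Lemma~\ref{lem-a} in absorbing the "bad" $q$-power of the gradient is the conceptual crux.
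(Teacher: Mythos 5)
Your first two steps coincide with the paper's: you apply Lemma \ref{lemCa} with $k=k_{i+1}$ on $B_{i+1}\subset B_i$, absorb the $(s-1)q$-kernel term into $r^{-sq}\int w_{i+1}^q$, convert the level-set measure via $|A^+(k_{i+1},r_i)|\,H_r(k_{i+1}-k_i)\le\int_{B_i}H_r(w_i)\,dx$, and handle the tail and the $f$-term exactly as in the paper (monotonicity $w_{i+1}\le u_+$ plus $|x-x_0|^{-N-sq}$ comparability, and H\"older giving $d/g_r(k_{i+1}-k_i)$ with the exponent $1/\gamma'$). The problem is your final step.

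Invoking Lemma \ref{lem-a} is the wrong mechanism for this lemma, and it creates a genuine gap. First, Lemma \ref{lem-a} is only valid under \eqref{pqs} (essentially $q\le p^*$), an assumption that is \emph{not} made in Lemma \ref{lem3.2}: here only \eqref{exp} ($1<p\le sq$) and $a_0>0$ are assumed, and $q$ may well exceed $p^*$. Second, even where Lemma \ref{lem-a} applies, it produces the term $r^{(1-s)q}\big(\dashint|Dw_{i+1}|^p\big)^{q/p}$, whose exponent $q/p$ cannot be collapsed into the clean powers $1+\tfrac1{\kappa'}$ and $\tfrac1{\gamma'}+\tfrac1{\kappa'}$ of the claimed recursion; this is precisely why the paper's Lemma \ref{lem3.1} (the $a_0=0$ case, where Lemma \ref{lem-a} is genuinely needed because the Gagliardo energy may degenerate) has a much more complicated recursion involving the exponents $q/p$, $2-p/p^*$ and $p$. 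The correct route under $a_0>0$ is to exploit that the Caccioppoli inequality yields the \emph{full} Gagliardo seminorm $\int_{B_{i+1}}\int_{B_{i+1}}|w_{i+1}(x)-w_{i+1}(y)|^q\,d\mu$ (since $a\ge a_0>0$), and then to estimate $\dashint_{B_{i+1}}H_r(w_{i+1})$ by H\"older with the factor $\big(|A^+(k_{i+1},r_{i+1})|/|B_{i+1}|\big)^{1/\kappa'}$ followed by the fractional Sobolev embedding $W^{s,q}\hookrightarrow L^{q_s^*}$ for the $q$-power and $W^{1,p}\hookrightarrow L^{p^*}$ for the $p$-power (so $\kappa=\min\{p^*/p,\,q_s^*/q\}$ in the proof); the measure conversion then supplies exactly the extra power $\big(\dashint_{B_i}H_r(w_i)\big)^{1/\kappa'}/H_r(k_{i+1}-k_i)^{1/\kappa'}$, with no appeal to Lemma \ref{lem-a} and no condition \eqref{pqs}. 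Also note that Lemma \ref{lem-Poincare} (fractional Poincar\'e) alone does not give the needed higher integrability of $w_{i+1}$; you need the fractional Sobolev embedding into $L^{q_s^*}$.
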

\begin{proof}
[\bf Proof.]
By utilizing the (fractional) Sobolev embedding theorem and the H\"{o}lder inequality, we have
\begin{align}\label{sup1}
&\quad\dashint_{B_{i+1}} H_{r}(w_{i+1}) dx  \nonumber\\
& \leq \left(\frac{\left| A^+(k_{i+1}, w_{i+1})\right|}{\left|B_{i+1 }\right|}\right)^{\frac{1}{\kappa'}}
 \bigg( \dashint_{B_{i+1}}\Big(\frac{w_{i+1}^p}{ r_{i+1}^p}+\frac{ w_{i+1}^q }{r_{i+1}^{sq}}\Big)^\kappa dx \bigg)^{\frac{1}{\kappa }}
\nonumber\\
&\leq C \left(\frac{\left| A^+(k_{i+1}, w_{i+1})\right|}{\left|B_{i+1 }\right|}\right)^{\frac{1}{\kappa'}}
\bigg(  \dashint_{B_{i+1} } \int_{B_{i+1}}\left|w_{i+1}(x) -w_{i+1}(y) \right|^q d \mu
+\dashint_{ B_{i+1} } |Dw_{i+1}|^p d x\bigg) \nonumber\\
&\quad +C \left(\frac{\left| A^+(k_{i+1}, w_{i+1})\right|}{\left|B_{i+1 }\right|}\right)^{\frac{1}{\kappa'}}
\dashint_{B_{i+1}} H_{r}(w_{i+1}) dx,
\end{align}
where we used the fact $\kappa:=\min\{p^*/p,q^*_s/q\}$ and $\frac{1}{\kappa}+\frac{1}{\kappa'}=1$.
It is not hard to verify that
$$
\left| A^+(k_{i+1}, r_{i+1})\right| \le
\int_{A^+(k_{i+1}, r_{i+1})} \frac{H_{r}((u-k_{i})_+)}{H_{r}( k_{i+1}-k_i)} dx \le
\int_{B_{i}} \frac{H_{r}(w_{i})}{H_{r}( k_{i+1}-k_i)} dx,
$$
where the set $A^+$ is defined as \eqref{A}. Applying Lemma \ref{lemCa} with $a_0>0$ and \eqref{r}, we get
\begin{align}\label{sup2}
&\quad\int_{B_{i+1}}\dashint_{B_{i+1}}\frac{\left|w_{i}(x)-w_{i}(y)\right|^{q}}{|x-y|^{N+sq}}
d x d y
+ \dashint_{B_{i+1}}|D {w}_{i} |^p  dx
\nonumber\\
&\leq C\left(\frac{ r_i}{r_i-r_{i+1}}\right)^{N+q}
\bigg( \dashint_{B_i}\frac{w_{i+1}^p}{ r^p}+\frac{w_{i+1}^q }{r^{sq}} dx+
\dashint_{B_i} \int_{\mathbb{R}^N \backslash  B_{i+1}}  \frac{w_{i+1}^{q-1}(x)w_{i+1}(y)}{\left|x-x_0\right|^{N+s q}} d x d y
+\dashint_{B_{i}}|f|w_{i+1}\,dx\bigg)\nonumber\\
&\leq C2^{(N+q)i}
\bigg( \dashint_{B_i} H_{r}(w_{i+1})dx
+\overline{\operatorname{Tail}} (u_+;x_0,r/2)\dashint_{B_i} w_{i+1}dx
+\dashint_{B_{i}}|f|w_{i+1}\,dx\bigg).
\end{align}
Now we analyse
$$
\dashint_{B_i} w_{i+1}\,dx
\le\dashint_{B_i}\frac{h_{r}((u-k_{i})_+)}{h_{r}( k_{i+1}-k_i)} (u-k_{i+1})_+\,dx
\le \frac{1}{h_{r}( k_{i+1}-k_i)}\dashint_{B_i}H_{r}((u-k_{i})_+)\,dx
$$
and
$$
w_{i+1}=(u-k_{i+1})_+\le (u-k_{i+1})_+\left[\frac{(u-k_{i})_+}{k_{i+1}-k_i}\right]^{l-1}\le \frac{(u-k_{i})_+^l}{(k_{i+1}-k_i)^{l-1}},\quad \text{for } l\ge1.
$$
For the last integral in \eqref{sup2}, we have by the H\"{o}lder inequality that
\begin{align}\label{sup3}
\dashint_{B_{i}}|f|w_{i+1}\,dx
&\leq \frac{1}{\left(k_{i+1}-k_i\right)^{\bar{q}-1}}
\left(\dashint_{B_{i}}|f|^\gamma \,d x\right)^{\frac{1}{\gamma}}\left(\dashint_{B_{i}} w_i^{\bar{q} \cdot \gamma^{\prime}} \,dx\right)^{\frac{1}{\gamma^{\prime}}} \nonumber\\
& =\frac{r^{\bar{q}}}{\left(k_{i+1}-k_i\right)^{\bar{q}-1}}
\left(\dashint_{B_{i}}|f|^\gamma \,d x\right)^{\frac{1}{\gamma}}
\left(\dashint_{B_{i}}\frac{w_i^q}{r^q} \,d x\right)^{\frac{1}{\gamma^{\prime}}} \nonumber\\
& \leq\frac{r^{\bar{q}}}{\left(k_{i+1}-k_i\right)^{\bar{q}-1}}
\left(\dashint_{B_{i}}|f|^\gamma \,d x\right)^{\frac{1}{\gamma}}
\left(\dashint_{B_{i}} H_{r}(w_i)\,d x\right)^{\frac{1}{\gamma^{\prime}}}
\end{align}
with $\bar{q}:=q/\gamma'=(\gamma-1)q/\gamma $. Substituting \eqref{sup2} and \eqref{sup3}
into \eqref{sup1} leads to the desired estimate.
\end{proof}

\begin{lemma}\label{lem3.3}
Suppose that $u$ is a minimizer to \eqref{q1} with $K_{sq}$ and $F$ satisfying \eqref{F}--\eqref{ca} with $a_0>0$, and 
the function $f|_{B_r}$ belongs to $L^{\gamma}(B_{r})$ with
\begin{align*}
\gamma>\max\Big\{\frac{N}{p},\frac{q}{q-1}\Big\}.   
\end{align*}
Then, for any $\delta\in(0,1)$, we have
\begin{align*}
\sup _{B_{r/2}} u_{+}
\leq C_\delta H_{r}^{-1}\left(\dashint_{B_{ r}} H_{r}\left(u_{+}\right) \,d x\right)
+\delta h_{r}^{-1}\big(r^{-sq}\operatorname{Tail}^{q-1} (u_+;x_0,r/2)\big)
+\delta g_{r}^{-1}(d),
\end{align*}
 where $d=\Big(\dashint_{B_{r}}|f(x)|^\gamma \,dx\Big)^{\frac{1}{\gamma}}$  and the constant $C_{\delta}>0$ depends on $s,p,q,N,\Lambda,a_0,A_0$ and $\delta$.
 \end{lemma}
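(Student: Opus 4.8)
The plan is to run a De Giorgi iteration on the sequence $\{Y_i\}$ built from the excess energy on the shrinking balls $B_i$, feeding into the iteration Lemma \ref{lem-iteration}. Concretely, I would set
\[
Y_i:=\dashint_{B_i}\frac{H_r(w_i)}{H_r(\bar k)}\,dx,
\]
so that $Y_i$ is dimensionless and $Y_i\le 1$ for all $i$ will (after sending $i\to\infty$, since $w_i\to(u-\bar k)_+$ on $B_{r/2}$ and $k_i\to\bar k$) force $u\le\bar k$ a.e.\ on $B_{r/2}$, hence $\sup_{B_{r/2}}u_+\le\bar k$. The starting point is Lemma \ref{lem3.2}: dividing that inequality through by $H_r(\bar k)$ and using the monotonicity/homogeneity of $H_r,h_r,g_r$ together with $k_{i+1}-k_i=2^{-i-1}\bar k$, one rewrites the recursion in the form
\[
Y_{i+1}\le C\,b^{\,i}\Big(Y_i^{1+\frac1{\kappa'}}+Y_i^{\frac1{\gamma'}+\frac1{\kappa'}}\Big)
\Big(1+\frac{\overline{\operatorname{Tail}}(u_+;x_0,r/2)}{h_r(\bar k)}+\frac{d}{g_r(\bar k)}\Big)
\]
for a fixed base $b>1$ depending only on $N,q,\kappa'$; the key arithmetic point is that the exponents $1+\tfrac1{\kappa'}$ and $\tfrac1{\gamma'}+\tfrac1{\kappa'}$ are both $>1$ — the first automatically, the second precisely because $\gamma>\tfrac Np$ forces $\kappa=p^*/p$ large enough that $\tfrac1{\gamma'}+\tfrac1{\kappa'}>1$ (this is where the hypothesis $\gamma>N/p$ is used, together with $\gamma>q/(q-1)$ needed already in Lemma \ref{lem3.2}).

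Next I would \emph{choose $\bar k$} so that the smallness hypothesis $Y_0\le\min\{1,(2K)^{-1/\delta_1}b^{-1/\delta_1^2}\}$ of Lemma \ref{lem-iteration} holds, where $\delta_1=\tfrac1{\gamma'}+\tfrac1{\kappa'}-1>0$, $\delta_2=\tfrac1{\kappa'}$ (so $\delta_2\ge\delta_1$ after possibly enlarging, or one applies the lemma with the two exponents in the right order), and $K=C(1+\overline{\operatorname{Tail}}/h_r(\bar k)+d/g_r(\bar k))$. A natural candidate is
\[
\bar k:=C_\delta\,H_r^{-1}\!\Big(\dashint_{B_r}H_r(u_+)\,dx\Big)
+\delta\,h_r^{-1}\!\big(r^{-sq}\operatorname{Tail}^{q-1}(u_+;x_0,r/2)\big)
+\delta\,g_r^{-1}(d).
\]
With this choice: the second and third terms guarantee $\overline{\operatorname{Tail}}/h_r(\bar k)\lesssim r^{-sq}\operatorname{Tail}^{q-1}/h_r(\bar k)\le\delta^{-(q-1)}$ and $d/g_r(\bar k)\le\delta^{-(\bar q-1)}$ are bounded (using that $h_r,g_r$ are increasing and the elementary relation between $\operatorname{Tail}$ and $\overline{\operatorname{Tail}}$, namely $\overline{\operatorname{Tail}}=r^{-sq}\operatorname{Tail}^{q-1}$), so $K\le K(\delta)$ depends only on the data and $\delta$; and the first term, with $C_\delta$ chosen large depending on $K(\delta)$, $b$, $\delta_1$, makes $Y_0=\dashint_{B_r}H_r(w_0)/H_r(\bar k)\le\dashint_{B_r}H_r(u_+)/H_r(\bar k)$ as small as the threshold requires, because $H_r(\bar k)\ge H_r(C_\delta H_r^{-1}(\dashint H_r(u_+)))\gtrsim C_\delta^{\min\{p,q\}}\dashint_{B_r}H_r(u_+)$ by the super-homogeneity of $H_r$. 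Here one must be a little careful that $\bar k$ appears on both sides, but since $\bar k$ is \emph{defined} by the displayed formula (not required to satisfy an implicit inequality), this is just a matter of verifying the resulting $Y_0$ is small — no fixed-point argument is needed.

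The main obstacle, and the step deserving the most care, is the bookkeeping of the two distinct super-linear exponents $\delta_1<\delta_2$ together with the $r$-dependent, non-homogeneous functions $H_r,h_r,g_r$: one has to confirm that dividing the Lemma \ref{lem3.2} inequality by $H_r(\bar k)$ genuinely produces the clean form required by Lemma \ref{lem-iteration}, i.e.\ that every occurrence of $k_{i+1}-k_i$, $\operatorname{Tail}$, and $d$ collapses into the constant $K$ and the geometric factor $b^i$, and that the inverses $H_r^{-1},h_r^{-1},g_r^{-1}$ interact correctly with scaling (e.g.\ $H_r^{-1}(\lambda^{\min\{p,q\}}t)\ge\lambda H_r^{-1}(t)$-type inequalities for $\lambda\ge1$). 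Once the recursion is in standard form and $Y_0$ is shown small, Lemma \ref{lem-iteration} gives $Y_i\to0$, hence $\sup_{B_{r/2}}u_+\le\bar k$, which is exactly the claimed estimate with the stated dependence of $C_\delta$ on $s,p,q,N,\Lambda,a_0,A_0,\delta$.
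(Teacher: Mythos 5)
Your proposal is essentially the paper's proof: the same De Giorgi iteration built from Lemma \ref{lem3.2}, the same three-part choice of $\bar{k}$ (the $\delta h_r^{-1}(\cdot)$ and $\delta g_r^{-1}(d)$ parts fixed first so that the tail and $d$ ratios are controlled in terms of $\delta$, then the $H_r^{-1}$ part tuned so that $Y_0$ falls below the threshold of Lemma \ref{lem-iteration}), and the same use of $\gamma>\frac{N}{p}$ to guarantee $\frac{1}{\kappa'}+\frac{1}{\gamma'}>1$; the only difference is that you normalize $Y_i$ by $H_r(\bar{k})$, while the paper keeps $Y_i=\dashint_{B_i}H_r(w_i)\,dx$ and carries the factor $H_r(\bar{k})^{-1/\kappa'}$ inside the iteration constant. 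One caveat inside the bookkeeping step you flag: dividing by $H_r(\bar{k})$ does not make the constant purely $K(\delta)$, because the two terms rescale differently and a residual factor $H_r(\bar{k})^{\frac{1}{\kappa'}+\frac{1}{\gamma'}-1-\frac{1}{\kappa'}}=H_r(\bar{k})^{-\frac{1}{\gamma}}$ survives on the lower-exponent term, but this is exactly the two-exponent situation Lemma \ref{lem-iteration} is designed for and is absorbed in the same way as in the paper's choice of $\bar{k}$, so it does not change the conclusion.
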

\begin{proof}
[\bf Proof.]
Recalling the definitions of $w_i,B_i$ and $H_{r}(\cdot)$, we denote
$$
Y_i=\dashint_{B_i} H_{r}(w_{i}) \,d x.
$$
Lemma \ref{lem3.2} gives the following estimate for $Y_i$:
\begin{align*}
Y_{i+1} & \le\frac{C 2^{i(N+q)}}{\left(H_{r}(2^{-i-1}\bar{ k})\right)^{1 / \kappa^{'}}}
\left(1+\frac{\operatorname{Tail}^{q-1}(u_+;x_0,r/2)}{ r^{sq}h_{r}(2^{-i-1} \bar{k})}\right) Y_i^{1+\frac{1}{\kappa^{\prime}}} \nonumber\\
& \quad+\frac{C 2^{i(N+q)}}{\left(H_{r}(2^{-i-1} \bar{k})\right)^{1 / \kappa^{\prime}}}
\frac{d}{g_{r}(2^{-i-1} \bar{k})} Y_i^{1+\left(\frac{1}{\kappa^{\prime}}+\frac{1}{\gamma^{\prime}}-1\right)}.
\end{align*}
By observing
$$
H_{r}(2^{-i-1} \bar{k}) \geq 2^{-(i+1) q}H_{r}( \bar{k}), \ h_{r}(2^{-i-1} \bar{k}) \geq 2^{-(i+1)(q-1)} h_{r}(\bar{k})
$$
 and
$$g_{r}(2^{-i-1} \bar{k})=2^{-(i+1)(\bar{q}-1)} g_{r}(\bar{k}),$$
then we get
\begin{align*}
Y_{i+1} &\le\frac{C 2^{i(N+2q+\frac{q}{\kappa'})}}{H_{r}( \bar{k})^{\frac{1}{\kappa'}   }  }
\left(1+\frac{\operatorname{Tail}^{q-1} (u_+;x_0,r/2)}{ r^{sq}h_{r}\left( \bar{k}\right)}\right) Y_i^{1+\frac{1}{\kappa^{\prime}}} \nonumber\\
& \quad+\frac{C 2^{i(N+q+\bar{q}+\frac{q}{\kappa'})}}{H_{r}( \bar{k})^{\frac{1}{\kappa'}   }  }
\frac{d}{g_{r}( \bar{k} )} Y_i^{1+\left(\frac{1}{\kappa' }+\frac{1}{\gamma'  }-1\right)}.
\end{align*}
Now we pick first $\bar{k}  \geq \delta h_{r}^{-1}\left(r^{-sq} \operatorname{Tail}^{q-1} (u_+;x_0,r/2)\right)
+\delta g_{r}^{-1}(d)$ such that
$$
\frac{\delta ^{q} \operatorname{Tail}^{q-1} (u_+;x_0,r/2)}{r^{sq}h_{r}\left( \bar{k}\right)}
\leq \frac{\operatorname{Tail}^{q-1}(u_+;x_0,r/2)}{ r^{sq}h_{r}\left(  \bar{k} / \delta\right)}\leq 1
$$
and
$$
\frac{\delta^{q}d }{g_{r}(\bar{k})}
\leq
\frac{d }{g_{r}\left( \bar{k} / \delta\right)}
\leq 1.
$$
Therefore, we have
\begin{align*}
Y_{i+1} \le
\frac{C 2^{i\theta}}{\delta^{q}  H_{r}( \bar{k})^{ \frac{1}{\kappa' } }}
\Big(Y_i^{1+\frac{1}{\kappa^{\prime}}} +Y_i^{1+\left(\frac{1}{\kappa^{\prime}}+\frac{1}{\gamma^{\prime}}-1\right)}\Big),
\end{align*}
where $\theta:=N+2q+\frac{q}{\kappa'}$. Moreover, we can verify
$$
\frac{1}{\kappa'}+\frac{1}{\gamma'}=\frac{\kappa-1}{\kappa}+\frac{\gamma-1}{\gamma} >1
$$
due to $\gamma>\frac{\kappa}{\kappa-1}$ ensured by \eqref{gam}.

With taking $\sigma:=\frac{\kappa'}{1 / \kappa'+1 /\gamma^{\prime}-1}+\left(\kappa'\right)^2(1-\frac{1}{\gamma^{\prime}})$,
we aim at selecting $\bar{k}$ large enough such that
\begin{align}\label{1sup3}
Y_0&=\dashint_{B_{r}}H_{r}\left(\left(u-\bar{k}\right)\right)\, d x
 \leq\dashint_{B_{r}} H_{r}\left(u_{+}\right)\, d x
 \leq\Bigg(\frac{2C}{\delta^{q} \left(H_{r}\left(\bar{ k}\right)\right)^{1 / \kappa^{\prime}}  }\Bigg)^{-\kappa'}
 2^{-\theta\sigma}.
\end{align}
By noticing
$$\Bigg(\frac{2C}{\delta^{q}  \left(H_{r}\left(\bar{ k}\right)\right)^{1 / \kappa^{\prime}}  }\Bigg)^{-\kappa'}
 2^{-\theta\sigma}
 =2^{-\theta\sigma}\Bigg(\frac{\delta^{q} }{ 2C  }\Bigg)^{\kappa'}H_{r}\left( \bar{k} \right) $$
and doing some calculations, we select $\bar{k}$ such that
$$\bar{k}\ge H_{r}^{-1}\left(2^{\theta\sigma}\left(\frac{2C}{\delta^{q}  }\right)^{\kappa'}
\dashint_{B_{r}} H_{r}(u_{+}) \,d x\right) $$
to ensure the validity of \eqref{1sup3}.
We eventually choose
$$
\bar{k}=H_{r}^{-1}\left(2^{\theta\sigma}\left(\frac{2C}{\delta^{q} }\right)^{\kappa'}\dashint_{B_{r}} H_{r}(u_{+}) \,d x\right)
+\delta h_{r}^{-1}\big(\operatorname{Tail}^{q-1} (u_+;x_0,r/2)/r^{sq}\big)
+\delta g_{r}^{-1}(d).
$$
At this moment, we can exploit Lemma \ref{lem-iteration} to conclude that $Y_i \rightarrow 0$ as $i \rightarrow \infty$.
This ends the proof.
\end{proof}

\noindent{\bf Proof of Theorem \ref{th1}.}
The theorem is a direct corollary of Lemma \ref{lem3.3}.
\qquad$\Box$

\medskip

Next, we consider the more general scenario that $a_0=0$ in \eqref{ca}, which enjoys indeed the properties of double phase problems at this time.

\begin{lemma}\label{lem3.1}
Let $u$ be a minimizer to \eqref{q1}. 
Suppose that \eqref{F}--\eqref{exp} with $a_0=0$ are in force. Let
\begin{align*}
\left\{\begin{array}{l}
q \leq\frac{Np}{N-p}=: p^* \text{  for  } 1<p\le N,\\[2mm]
q<\infty\text{  for  } p> N
\end{array}\right.
\end{align*}
hold true. The function $f|_{B_r}$ belongs to $L^{\gamma}(B_{r})$ with $\gamma>\frac{p}{p-1}$.
Let the notations $B_i,w_i$ be given in \eqref{Bj}--\eqref{w}, respectively.
 Then we have for all $i\in \Bbb{N}$ that
\begin{align}\label{v0'}
r^{-p}\dashint_{B_{i+1}}H(w_{i+1}) d x
&\leq C r^{(1-s)q}2^{i m}\bigg(\frac{ 1 }{r^{sq}}+\frac{ 1}{ r ^p  }
+\frac{  \operatorname{Tail}_a^{q-1}\left(u_{+}, x_0, r / 2\right)}{r^{sq}\bar{k}^{q-1}} \bigg)^{   \frac{q}{p}}
\left(\dashint_{B_i} H(w_i) dx\right)^{   \frac{q}{p}}    \nonumber\\
&\quad+\frac{Cr^{(1-s)q}2^{i m}d^{   \frac{q}{p}}  }{\bar{k}^{(\bar{p}-1)q/p}}
\left(\dashint_{B_i}  H(w_i)d x\right)^{\frac{q(\gamma-1)}{p\gamma}}     \nonumber\\
&\quad+ C2^{i m}\bigg(\frac{ 1 }{r^{sq}}+\frac{1}{ r ^p  }
+\frac{ \operatorname{Tail}_a^{q-1}\left(u_{+}, x_0, r / 2\right)}{r^{sq}\bar{k}^{q-1}} \bigg)
\Big(\dashint_{B_i} H(w_i) dx\Big)^{2-   \frac{p}{p^*}}    \nonumber\\
&\quad+\frac{ C 2^{im} d }{   \bar{k}^{q\left(1-p/p^*\right)+\bar{p}-1}  }
\left(\dashint_{B_i}  H(w_i)d x\right)^{ 2-   \frac{p}{p^*}-\frac{1}{\gamma} }   \nonumber\\
&\quad+\frac{ C2^{i m}}{r^{s q}\bar{k}^{q(p-1)}  }\left(\dashint_{B_i}  H(w_i)d x\right)^{p},
\end{align}
where $\bar{p}:=(\gamma-1)p/\gamma$, $m:=\max\{(N+s q)q/p,q+\bar{p}-p q / p^*, q(p-1)\}$, $d=\Big(\dashint_{B_{r}}|f(x)|^\gamma\,dx\Big)^{\frac{1}{\gamma}}$ and
the function $H(\cdot)$ is defined by $H(t):=t^p+a^+_r t^q$ for $t>0$.
Here $\operatorname{Tail}_a(u_+,x_0,r/2)$ is given by \eqref{taila}.
\end{lemma}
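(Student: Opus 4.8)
The plan is to imitate the structure of the proof of Lemma~\ref{lem3.2}, but to replace the single Sobolev–Poincar\'e step with the sharper inequality of Lemma~\ref{lem-a}, which is precisely engineered to handle the double-phase-type quantity $H(t)=t^p+a_r^+t^q$ when $a_0=0$. First I would set $B_{i+1}\subset B_i$, $w_i=(u-k_i)_+$, and recall that $w_{i+1}\le w_i$ and $r_i-r_{i+1}=2^{-i-2}r$. Apply Lemma~\ref{lem-a} with $f=w_{i+1}$, $L_0=a_r^+$, and $r$ replaced by $r_{i+1}\simeq r$; the support of $w_{i+1}$ is contained in $A^+(k_{i+1},r_{i+1})$, so $|\operatorname{supp} w_{i+1}|/|B_{i+1}|$ is controlled, exactly as in Lemma~\ref{lem3.2}, by $\dashint_{B_i}H_r(w_i)\,dx/H_r(k_{i+1}-k_i)$ — here one should be a little careful since $H$ (double-phase, with $a_r^+$) and $H_r$ (scaled, with $r^{-p},r^{-sq}$) differ by powers of $r$, but on the fixed annulus $r/2\le r_i\le r$ these are comparable up to constants and powers of $r$, which is what produces the various $r$-powers in the statement.

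After invoking Lemma~\ref{lem-a}, the right-hand side has three kinds of terms: (i) $a_r^+r^{(1-s)q}\big(\dashint_{B_{i+1}}|Dw_{i+1}|^p\big)^{q/p}$, (ii) $(|\operatorname{supp} w_{i+1}|/|B_{i+1}|)^{\sigma}\dashint_{B_{i+1}}|Dw_{i+1}|^p$, and (iii) $(|\operatorname{supp} w_{i+1}|/|B_{i+1}|)^{p-1}\dashint_{B_{i+1}}H(w_{i+1})/r^p$. The next step is to feed the Caccioppoli inequality, Lemma~\ref{lemCa} (in the form already extracted as \eqref{sup2}, with the gradient, the nonlocal Gagliardo term, the tail term and the $f$-term), into the gradient factors $\dashint_{B_{i+1}}|Dw_{i+1}|^p$. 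This is where the scaling bookkeeping is heaviest: the Caccioppoli right-hand side contributes $\dashint_{B_i}H_r(w_{i+1})\,dx$ (absorbing the Gagliardo piece via $r^{(s-1)q}$ weights), the tail term $r^{-sq}\operatorname{Tail}_a^{q-1}(u_+;x_0,r/2)\dashint_{B_i}w_{i+1}$, and $\dashint_{B_i}|f|w_{i+1}$. As in Lemma~\ref{lem3.2} one writes $w_{i+1}\le(u-k_{i})_+^{\,l}/(k_{i+1}-k_i)^{l-1}$ with $l=1$ for the tail contribution and with $l=q/\bar p\cdot\gamma'$-type exponents for the $f$-term, so that every occurrence of $w_{i+1}$ is upgraded to a power of $\dashint_{B_i}H(w_i)$ at the cost of negative powers of $(k_{i+1}-k_i)\simeq 2^{-i}\bar k$; the $f$-term is handled by H\"older exactly as in \eqref{sup3}, producing the factor $d^{\,q/p}$ (after the outer power $q/p$ coming from term (i)) and $d$ (from terms (ii)/(iii)), together with the $\bar p$-dependent powers of $\bar k$ displayed in the statement.

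Then I would collect terms. Term (i), being raised to the power $q/p$, produces the first two lines of \eqref{v0'}: the homogeneous $\big(r^{-sq}+r^{-p}+r^{-sq}\operatorname{Tail}_a^{q-1}/\bar k^{q-1}\big)^{q/p}\big(\dashint_{B_i}H(w_i)\big)^{q/p}$ and the $f$-line $d^{q/p}\bar k^{-(\bar p-1)q/p}\big(\dashint_{B_i}H(w_i)\big)^{q(\gamma-1)/(p\gamma)}$. Term (ii) contributes the third and fourth lines, with the exponent $2-p/p^*$ arising as $1+\sigma$ after substituting $\sigma=p/N=1-p/p^*$ (for $p<N$) and $1+\sigma'$ with the $\kappa$-bookkeeping built into the support bound, and the $f$-contribution inside it giving $d\,\bar k^{-q(1-p/p^*)-\bar p+1}\big(\dashint_{B_i}H(w_i)\big)^{2-p/p^*-1/\gamma}$. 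Term (iii) gives the last line $r^{-sq}\bar k^{-q(p-1)}\big(\dashint_{B_i}H(w_i)\big)^{p}$. The exponent $m=\max\{(N+sq)q/p,\ q+\bar p-pq/p^*,\ q(p-1)\}$ is simply the worst power of $2^i$ coming from the three sources: the Caccioppoli constant $2^{i(N+sq)}$ raised to $q/p$, the combination of the support-measure bound with the $\bar p$-powers, and the $q(p-1)$ from term (iii). The main obstacle is not any single inequality but the careful tracking of $r$-powers and of the exponents of $2^{-i}\bar k$ through the composition Lemma~\ref{lem-a}$\,\circ\,$Caccioppoli, making sure that the worst $2^i$-power is captured by $m$ and that each $\dashint_{B_i}H(w_i)$ appears with an exponent strictly larger than $1$ (so that the iteration Lemma~\ref{lem-iteration} will later apply); verifying the latter uses exactly the hypotheses \eqref{pqs} and \eqref{gam}, the same way $\gamma>\kappa/(\kappa-1)$ was used in Lemma~\ref{lem3.3}.
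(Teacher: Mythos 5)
Your proposal follows essentially the same route as the paper's proof: the paper likewise combines the Caccioppoli inequality of Lemma \ref{lemCa} (with $\rho=r_{i+1}$, $r=r_i$), the double-phase Sobolev-type Lemma \ref{lem-a} applied to $w_{i+1}$ with $L_0=a_r^+$, the level-set measure bound for $|A^+(k_{i+1},r_{i+1})|/|B_{i+1}|$, and the H\"older/$\bar p$-power trick $w_{i+1}\le 2^{(i+1)(\bar p-1)}\bar k^{-(\bar p-1)}w_i^{\bar p}$ for the $f$-term. The only differences are presentational: the paper applies Caccioppoli first and then Lemma \ref{lem-a}, and it bounds the support-measure ratio via the $p$-power of $w_i$ alone (its \eqref{v5}) rather than via $H_r$, which is what produces the exact powers of $\bar k$ and $2^i$ appearing in \eqref{v0'}.
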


\begin{proof}[\bf Proof.]
We choose $\rho=r_{i+1}$ and $r=r_i$ in Lemma \ref{lemCa} to get that
 for any $\beta$ satisfying $1\le\beta< p$,
\begin{align}\label{v1}
\dashint_{B_{i}}|D w_i |^p  \,dx
&\leq \frac{C}{ (r_i-r_{i+1})^{p}  }\dashint_{B_{i}}  w_{i+1} ^p d x
+\frac{C}{(r_i-r_{i+1})^q}\dashint_{B_i}\int_{B_i}a(x,y)\frac{|w_{i+1}(x)+w_{i+1}(y)|^q}{|x-y|^{N+(s-1)q} }\,dxdy \nonumber\\
&\quad+\frac{Cr_i^{N+s q}}{(r_i-r_{i+1})^{N+sq}}
\dashint_{B_i} \int_{\mathbb{R}^N \backslash  B_{i+1}}a(x,y)\frac{w_{i+1}^{q-1}(x) w_{i+1}(y)}{\left|x-x_0\right|^{N+s q}} \,d x d y
+C\dashint_{B_{i}}|f|w_{i+1}\,dx    \nonumber\\
&\leq \frac{C2^{iq}a_r^+}{r^{sq}}\dashint_{B_i} w_i^q \,dx
+\frac{C2^{ip}}{r^p}\dashint_{B_{i}}w_i^p\,d x+\frac{C2^{(i+1)(\beta-1)}}{\bar{k}^{\bar{q}-1}}\dashint_{B_i}|f| w_i^{\beta}\,dx
\nonumber\\
&\quad+
\frac{C 2^{i(N+s q+q)} \operatorname{Tail}_a^{q-1}\left(u_{+}, x_0, r / 2\right)}{r^{sq}\bar{k}^{q-1}} \dashint_{B_i} w^q_i \,d x,
\end{align}
where we utilized $w_{i+1} \leq u_{+}$, $r\ge r_i \geq r / 2$ and
$$
w_{i+1}=\left(u-k_{i+1}\right)_{+} 
\le\frac{2^{(i+1)(p-1)}}{\bar{k}^{\beta-1}} w_i^\beta .
$$
Now we consider the integral $\dashint_{B_i}|f| w_i^{\beta} \,d x$ and use the H\"{o}lder inequality to find that
\begin{align}\label{v2}
\dashint_{B_i}|f| w_i^{\beta} \,d x
 & \leq\left(\dashint_{B_i}|f|^\gamma \,d x\right)^{\frac{1}{\gamma}}
 \left(\dashint_{B_i} w_i^{\frac{\gamma\beta}{\gamma-1}} \,d x\right)^{\frac{\gamma-1}{\gamma}}\nonumber \\
& \leq C\left(\dashint_{B_r}|f|^\gamma \,d x\right)^{\frac{1}{\gamma}}
\left(\dashint_{B_i} w_i^{p} \,d x\right)^{\frac{\gamma-1}{\gamma}},
\end{align}
where $\beta$ was taken as $\beta=\bar{p}$.
Thus, by inserting \eqref{v2} into \eqref{v1}, we derive that
\begin{align}\label{v3}
\dashint_{B_{i+1}} |D w_{i+1}|^p \,dx
&\leq \bigg(\frac{C 2^{iq} }{r^{sq}}+\frac{C2^{ip}}{ r ^p  }
+\frac{C 2^{i(N+s q)} \operatorname{Tail}_a^{q-1}\left(u_{+}, x_0, r / 2\right)}{  r^{sq}\bar{k}^{q-1}  } \bigg)
\dashint_{B_i} H(w_i) \,dx
\nonumber\\
&\quad+\frac{C 2^{i(\bar{p}-1)}d}{\bar{k}^{\bar{p}-1}}\left(\dashint_{B_i}  H(w_i)\,d x\right)^{\frac{\gamma-1}{\gamma}}.
\end{align}

On the other hand, from Lemma \ref{lem-a}, there holds that
\begin{align}\label{v4}
r_{i+1}^{-q}\dashint_{B_{i+1}}H(w_{i+1}) \,d x
&\le
\dashint_{B_{i+1}}\left(\frac{w_{i+1}}{ r_{i+1} }\right)^p
+ a_r^+\left(\frac{w_{i+1}}{r_{i+1}^ s}\right)^q \,d x\nonumber\\
& \leq C  r_{i+1}^{(1-s) q} a_r^+
\left(\dashint_{B_{i+1}}|Dw_{i+1}|^p \,d x\right)^{\frac{q}{p}} \nonumber\\
&\quad +C \left(\frac{|A^+(k_{i+1}, r_{i+1})|}{\left|B_{i+1}\right|}\right)^{\sigma }
\dashint_{B_{i+1}}|D w_{i+1}|^p \,d x \nonumber\\
& \quad
+C\left(\frac{|A^+(k_{i+1}, r_{i+1})|}{\left|B_{i+1}\right|}\right)^{p-1} \dashint_{B_{i+1}}H\left( \frac{w_{i+1}}{r_{i+1}} \right)\,dx
\end{align}
with
$$
\sigma:= \begin{cases}\frac{p}{N} & \text { for } 1<p<N,
\\ 1-\frac{p}{q} & \text { for } p \geq N.\end{cases}
$$
Here the choice of $r$ directly ensures that
$$\dashint_{B_{i+1}}H\left( \frac{w_{i+1}}{r_{i+1}} \right)  \,d x
\le Cr^{-sq}\dashint_{B_{i+1}}H\left( w_{i+1} \right)  \,d x.$$
It is not hard to verify that
\begin{align}\label{v5}
\frac{|A^+(k_{i+1}, r_{i+1})|}{\left|B_{i+1}\right|}
 &\le
\frac{1}{\left|B_{i+1}\right|}\int_{A^+(k_{i+1}, r_{i+1})} \frac{\left(u-k_i\right)_{+}^p}{\left(k_{i+1}-k_i\right)^p}\,dx\nonumber\\
&=\frac{ 2^{(i+1)(p-1)} }{\left|B_{i+1}\right|}
\int_{A^+(k_{i+1}, r_{i+1})} \frac{ w_i^p }{\bar{k}^{p-1}}\,dx   \nonumber\\
&\le\frac{2^{(i+1)p}}{\bar{k}^{p}}\dashint_{B_{i}} H(w_i)\,dx
\end{align}
due to \eqref{r}. A combination of \eqref{v3}--\eqref{v5} infers that
\begin{align*}
r^{-p}\dashint_{B_{i+1}}H(w_{i+1}) \,d x
&\leq C r^{(1-s)q}\bigg(\frac{ 2^{iq} }{r^{sq}}+\frac{ 2^{ip}}{ r ^p  }
+\frac{ 2^{i(N+sq)} \operatorname{Tail}_a^{q-1}\left(u_{+}, x_0, r / 2\right)}{ r^{sq}\bar{k}^{q-1}} \bigg)^{   \frac{q}{p}}
\Big(\dashint_{B_i} H(w_i) \,dx\Big)^{   \frac{q}{p}}
\nonumber\\
&\quad+Cr^{(1-s)q}\bigg(\frac{2^{(i+1)(\bar{p}-1)}d}{\bar{k}^{\bar{p}-1}}\bigg)^{   \frac{q}{p}}
\left(\dashint_{B_i}  H(w_i)\,d x\right)^{\frac{q(\gamma-1)}{p\gamma}}\\
&\quad+ C\bigg(\frac{ 2^{iq} }{r^{sq}}+\frac{2^{ip}}{ r ^p  }
+\frac{ 2^{i(N+s q)} \operatorname{Tail}_a^{q-1}\left(u_{+}, x_0, r / 2\right)}{r^{sq}\bar{k}^{q-1}} \bigg)
\Big(\dashint_{B_i} H(w_i) \,dx\Big)^{ 1+ \sigma  }
\nonumber\\
&\quad+\frac{ C 2^{i\left(q+\bar{p}-p q / p^*\right)} d }{   \bar{k}^{q\left(1-p / p^*\right)+\bar{p}-1}  }
\left(\dashint_{B_i}  H(w_i)\,d x\right)^{ 1+\sigma-\frac{1}{\gamma} }\\
&\quad+\frac{ C2^{i q(p-1)}}{r^{s q}\bar{k}^{q(p-1)}  }
\left(\dashint_{B_i}  H(w_i)\,d x\right)^{p}.
\end{align*}
Hence, \eqref{v0'} is an immediate result of the above inequality by arrangements.
\end{proof}

Now we are ready to give the proof of boundedness result on the mixed local and nonlocal double phase functionals.

\medskip

\noindent{\bf Proof of Theorem \ref{th2}.}
Let the assumptions of Theorem \ref{th2} hold.
Now we set
$$
Y_{i}=\dashint_{B_{i}}H(w_i)\,d x, \quad i=0,1,2, \ldots,
$$
where $B_i,w_i$ are given in \eqref{Bj} and \eqref{w}, and $H(\cdot)$ is as defined in Lemma \ref{lem3.1}.
Letting
$$\bar{k}\ge\max\{1,\operatorname{Tail}_a\left(u_{+}, x_0, r / 2\right)\}$$
to be determined later, we can deduce from Lemma \ref{lem3.1} that
\begin{align}\label{1ta1}
Y_{i+1}&\le C 2^{im}\Big(Y_i^{ \frac{q}{p}  }+Y_i^{ \frac{(\gamma-1)q}{\gamma p}}+Y_i^{ 1+ \sigma }
+Y_i^{1+\sigma-\frac{1}{\gamma}}+Y_i^{ p}\Big),
\end{align}
where $m:=\max\{(N+s q)q/p , q+\bar{p}-p q / p^*, q(p-1)\}$, and the constant $C$ depends also upon $r,d$.
Due to the assumption \eqref{gam}, it can be checked that
$$ \frac{(\gamma-1)q}{\gamma p}>1  \quad \text{and} \quad \sigma-\frac{1}{\gamma}>0.$$
Additionally, we can see that $2-\frac{p}{p^*}>1$. 
Because $H(u)\in L^1(B_r)$ from \eqref{pqs}, one can see that
 $$
 Y_0=\dashint_{B_{r}}H((u-\bar{k})_+)\,d x\rightarrow 0
 $$
as $\bar{k}\rightarrow+\infty$. As a result, we can proceed to select $\bar{k}$ so large that
$$
\cdots \le Y_i\le Y_{i-1}\le \cdots\le Y_0\le 1
$$
for $i=1,2,3,\cdots$. Hence we rearrange the display \eqref{1ta1} as
\begin{align}\label{1ta2}
Y_{i+1}&\le C 2^{im}Y_i^{1+\tau}.
\end{align}
Here
$$
\tau:=\min\left\{\frac{q}{p}-1,\frac{(\gamma-1)q}{\gamma p}-1,\sigma,\sigma-\frac{1}{\gamma},p-1\right\}>0.
$$

Finally, we choose such a large number $\bar{k}$ that 
$$
Y_{0} \leq C^{-\frac{1}{\tau}} 2^{-\frac{m}{\tau^{2}}},
$$
which combining with \eqref{1ta2} and Lemma \ref{lem-iteration} guarantees that
\begin{align}\label{th1-8}
Y_j\rightarrow 0 \quad \text{as } j\rightarrow\infty.
\end{align}
Under the above election of $\bar{k}$, \eqref{th1-8} guarantees that $u\le 2\bar{k}$ in $B_{r/2}$. We could infer $u\in L^\infty(B_{r/2})$ applying the analogous argument to $-u$.
\qquad$\Box$
\section{Local H\"{o}lder continuity}

In this section, 
we first conclude expansion of positivity for minimizers of \eqref{q1} (see Lemmas \ref{lem4.1}--\ref{lem4.2}), which plays a crucial role on establishing H\"{o}lder continuity and Harnack inequalities.

\begin{lemma}\label{lem4.1}
Assume that $K_{sq}$ and $F$ satisfy \eqref{F} and \eqref{K} with $a_0>0$.
Let $B_{4 R}:=B_{4R}(x_0) \subset\subset\Omega$ with $R \leq 1$.
Let $u \in \mathcal{A}(\Omega) \cap L_{s q}^{q-1}\left(\mathbb{R}^N\right)$
be a minimizer of \eqref{q1}, where the function $f|_{B_{4 R}}$ belongs to $L^{\gamma}(B_{4 R})$ with $\gamma>1$.
Suppose that $u \ge M \text{   in   } B_{4 R}$
and
\begin{align}\label{hc0}
\left|B_{2 R} \cap\{u-M\geq t\}\right| \geq \nu\left|B_{2 R}\right|
\end{align}
for some $\nu \in(0,1)$, $t>0$ and $M\in\Bbb{R}$. Then for any $\delta \in\left(0, \frac{1}{2^8}\right]$, if
\begin{align}\label{hc}
\| f\|_{L^\gamma\left(B_{4 R}\right)}\left|B_{4 R}\right|^{-\frac{1}{\gamma}}
+(4R)^{-sq}\operatorname{Tail}^{q-1}\left((u-M)_{-} ; x_0, 4 R\right)
\leq h_{4R}(\delta t)
\end{align}
with $h_{4 R}(\delta t):= \frac{(\delta t)^{p-1}}{(4 R)^p}+ \frac{(\delta t)^{q-1}}{(4 R)^{sq}}$ as defined by \eqref{ha},
then there holds that
$$
\left|B_{2 R} \cap\{u-M<2 \delta t\}\right| \leq
\frac{C }{\nu}\Big(\delta^{(q-1)/2}
+ |\log \delta|^{-\frac{N(p-1)}{(N-1) p}}\Big)\left|B_{2 R}\right|,
$$
where $C>0$, independent of $M,t$, only depends on $s,p,q,N,\Lambda,a_0$ and $A_0$.
\end{lemma}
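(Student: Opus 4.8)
The plan is to establish this measure-shrinking (\emph{De Giorgi-type}) lemma by applying the Caccioppoli inequality of Lemma \ref{lemCa} to the function $w=(u-M-2\delta t)_-$ (equivalently, to $(u-k)_-$ with level $k=M+2\delta t$) over the pair of balls $B_{2R}\subset B_{4R}$, and then combining it with the isoperimetric inequality for level sets, Lemma \ref{measure1}. First I would record that $0\le w\le 2\delta t$ pointwise and $w\equiv 0$ outside the set $\{u-M<2\delta t\}$; by the lower bound $u\ge M$ in $B_{4R}$ this truncation only "sees" values of $u$ in the thin strip $[M,M+2\delta t)$, so all the right-hand side terms of Lemma \ref{lemCa} can be bounded by $2\delta t$ times a power, times the measure of the bad set, plus the tail contribution. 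Crucially, the tail term on the right of Lemma \ref{lemCa} involves $w=(u-k)_-$, and since $(u-k)_-\le (u-M)_- + (2\delta t + \text{(bounded)})$ one controls it by $\operatorname{Tail}^{q-1}((u-M)_-;x_0,4R)$ plus lower-order terms; the hypothesis \eqref{hc} is precisely what is needed so that the tail term and the $f$-term are each dominated by $h_{4R}(\delta t)$ and hence absorbed at the scale of the left-hand side. The outcome of this first block is a Caccioppoli estimate of the shape
\begin{align*}
\int_{B_{2R}}|Dw|^p\,dx + \int_{B_{2R}}\int_{B_{2R}}a(x,y)\frac{|w(x)-w(y)|^q}{|x-y|^{N+sq}}\,dxdy
\le C\Big(\tfrac{(\delta t)^p}{R^p}+\tfrac{(\delta t)^q}{R^{sq}}\Big)|B_{2R}\cap\{u-M<2\delta t\}|.
\end{align*}

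Next I would run the standard De Giorgi iteration on a \emph{dyadic sequence of levels} $k_j = M + 2\delta t + 2^{-j}(\text{something})$ — actually, since we only want one step of measure shrinking rather than full boundedness, it is cleaner to interpolate between the two levels $M+\delta t$ and $M+2\delta t$ using Lemma \ref{measure1} directly. Set $h_1 = \delta t$, $h_2 = 2\delta t$ (shifted by $M$), $f\leftrightarrow u-M$. The isoperimetric inequality gives
\begin{align*}
\Big(\frac{|B_{2R}\cap\{u-M\le \delta t+M\}|}{|B_{2R}|}\cdot \frac{|B_{2R}\cap\{u-M\ge 2\delta t\}|}{|B_{2R}|}\Big)^{\frac{N-1}{N}}
\le \frac{C R^{1-N/p}}{\delta t}\|\nabla (u-M)\|_{L^p(A)}\Big(\frac{|A|}{|B_{2R}|}\Big)^{\frac{p-1}{p}},
\end{align*}
where $A = B_{2R}\cap\{\delta t< u-M-M < 2\delta t\}$ is contained in $\{u-M<2\delta t\}$. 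Using hypothesis \eqref{hc0} to bound the second factor from below by $\nu^{(N-1)/N}$ (provided $2\delta t\le t$, i.e. $\delta\le 1/2$, so that $\{u-M\ge t\}\subset\{u-M\ge 2\delta t\}$), and inserting the gradient bound $\|\nabla(u-M)\|_{L^p(A)}^p = \int_{A}|Dw|^p \le C\big(\tfrac{(\delta t)^p}{R^p}+\tfrac{(\delta t)^q}{R^{sq}}\big)|\{u-M<2\delta t\}|$ from the Caccioppoli step, one arrives, after writing $R^{sq}=R^p\cdot R^{sq-p}$ and using $R\le 1$, $\delta<1$, at
\begin{align*}
\frac{|B_{2R}\cap\{u-M<2\delta t\}|}{|B_{2R}|}\le \frac{C}{\nu}\,\delta^{(q-1)/2}\,\frac{|\{u-M<2\delta t\}|^{(p-1)/p}}{|B_{2R}|^{(p-1)/p}} + (\text{a term forcing the }|\log\delta|\text{ factor}),
\end{align*}
where the second term comes from the complementary regime in a one-step-vs-many-steps bookkeeping: running the level iteration over $\sim|\log\delta|$ dyadic layers between $M$ and $M+2\delta t$ and using that each layer contributes a comparable slice, one gains the factor $|\log\delta|^{-N(p-1)/((N-1)p)}$ in the standard way (this is the De Giorgi "shrinking lemma" trick, cf. the treatment in \cite{Co,DKP2016}). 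Balancing the two mechanisms — the $\delta^{(q-1)/2}$ gain from the nonlocal/scaling term and the $|\log\delta|$ gain from counting layers — yields the stated bound.

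The main obstacle I anticipate is the careful handling of the \textbf{nonlocal tail and the mismatch of exponents} $p\le sq$. Unlike the classical local De Giorgi lemma, here the Caccioppoli right-hand side carries the term $\frac{(\delta t)^q}{R^{sq}}$ coming from the $q$-growth nonlocal piece, which is \emph{not} homogeneous of degree $p$ with the gradient term; this is why the final exponent is $\delta^{(q-1)/2}$ rather than a cleaner power, and why the hypothesis \eqref{hc} is phrased through $h_{4R}(\delta t)$ rather than a single power of $\delta t$. One must be vigilant that when passing the gradient bound into Lemma \ref{measure1} (which is purely a $W^{1,p}$ statement) the extra nonlocal term is carried along correctly and its scaling in $R$ combines with the $R^{1-N/p}$ from the isoperimetric inequality to leave a \emph{bounded} (indeed small, since $R\le 1$ and $sq\ge p$) prefactor. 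A secondary technical point is ensuring the tail of $(u-k)_-$ is genuinely controlled by the tail of $(u-M)_-$ up to harmless additive constants absorbed into \eqref{hc}; this uses $u\ge M$ on $B_{4R}$ so that on $B_{4R}$ one has $(u-k)_-\le 2\delta t$, while outside $B_{4R}$ one estimates $(u-M-2\delta t)_- \le (u-M)_- + 2\delta t$ and the constant $2\delta t$ contributes a term $\lesssim (\delta t)^{q-1}(4R)^{-sq}$ that is exactly of the form appearing in $h_{4R}(\delta t)$. Once these scaling bookkeeping issues are settled, the rest is the by-now routine De Giorgi iteration packaged via Lemma \ref{lem-iteration}.
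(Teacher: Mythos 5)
There is a genuine gap, and it sits exactly at the point you flag as a ``scaling bookkeeping issue''. When you push the Caccioppoli bound $\int|Dw|^p\,dx\lesssim\big(\tfrac{(\delta t)^p}{R^p}+\tfrac{(\delta t)^q}{R^{sq}}\big)|B_{4R}|$ through Lemma \ref{measure1}, the prefactor you are left with is of order $\big(1+(\delta t)^{q-p}R^{p-sq}\big)^{1/p}$. Since $sq\ge p$ and $R\le 1$ we have $R^{p-sq}\ge 1$, so this factor is \emph{not} bounded: it blows up precisely when $R^{sq-p}\le(\delta t)^{q-p}$. Your parenthetical claim that $R\le1$ and $sq\ge p$ make it ``bounded (indeed small)'' has the inequality backwards, and this is exactly why the paper cannot run a purely local De Giorgi argument. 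Moreover, a single application of Lemma \ref{measure1} with $h_1=\delta t$, $h_2=2\delta t$ produces no smallness in $\delta$ at all: the $\delta t$ in $h_2-h_1$ cancels against the $\delta t$ carried by the Caccioppoli bound, and the slice factor $\big(|A|/|B_{2R}|\big)^{(p-1)/p}\le1$ gives nothing, so your intermediate display with the factor $\delta^{(q-1)/2}$ does not follow. Smallness from the isoperimetric route only comes from summing over $\sim|\log\delta|$ \emph{disjoint} slices, and these must be taken at levels $2^{-k}t$ ranging roughly between $4\delta t$ and $\delta^{1/2}t$ (with Caccioppoli applied at each such level), not ``between $M$ and $M+2\delta t$'' as you propose: with layers below $2\delta t$ the isoperimetric inequality would only control $|\{u-M<\text{(much smaller level)}\}|$, not the set in the statement.

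The missing idea is the dichotomy the paper builds around $\tau=1/2$. If $(\delta^{1/2}t/R)^p>(\delta^{1/2}t/R^s)^q$, then for the levels $\ell=2^{-k+1}t$ with $\delta\lesssim 2^{-k}\lesssim\delta^{1/2}$ the nonlocal piece $\ell^q/R^{sq}$ is dominated by $\ell^p/R^p$, the Caccioppoli estimate collapses to $[(w-\ell)_-]^p_{W^{1,p}(B_{2R})}\le CR^{N-p}\ell^p$ with a clean constant, and the layer-counting over these $\sim\tfrac12|\log\delta|$ levels yields the $|\log\delta|^{-N(p-1)/((N-1)p)}$ term. In the complementary regime the local argument is useless, and the bound $\delta^{(q-1)/2}$ is produced by a mechanism your proposal never invokes: the ``good'' nonlocal interaction term on the left-hand side of Lemma \ref{lemCa}, namely $\int_{B_{2R}}(w(x)-\ell)_-\big(\int_{B_{2R}}a(x,y)(w(y)-\ell)_+^{q-1}|x-y|^{-N-sq}\,dy\big)dx$ with $\ell=4\delta^{1/2}t$, which is bounded above by $C(\delta^{1/2}t/R^s)^q|B_{4R}|$ through the Caccioppoli inequality and \eqref{hc}, and below by $\tfrac{\nu}{C}\,\delta^{1/2}t^qR^{-sq}\,|B_{2R}\cap\{w<2\delta^{1/2}t\}|$ through \eqref{hc0}. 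This is where the hypothesis $a_0>0$ is actually used and where the exponent $(q-1)/2$ comes from; without this term and without the case distinction, neither summand in the stated conclusion is reached by the argument you outline. (A minor further inaccuracy: the right-hand side of your claimed Caccioppoli estimate should involve $H_{4R}(\delta t)|B_{4R}|$ rather than the measure of the bad set, since the tail and $f$ contributions controlled via \eqref{hc} are full-ball quantities.)
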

\begin{proof}[\bf Proof.]
Let $\ell \ge\frac{\delta t}{2}$ and set $w=u-M$.
It is not difficult to verify that $w$ belongs to $\mathcal{A}(\Omega) \cap L_{s q}^{q-1}\left(\mathbb{R}^N\right)$
and is also a minimizer of $\mathcal{E}$.
With the help of Lemma \ref{lemCa} and \eqref{hc}, we can derive that
\begin{align}\label{h1}
&\quad[(w-\ell)_-]_{W^{1,p}(B_{2R})  }^p+a_0
\int_{B_{2R} }(w(x) -\ell)_{-}\Big(\int_{ B_{2R} }\frac{(w(y)-\ell)^{q-1}_{+}}{|x-y|^{N+sq}} d y\Big)  \,d x\nonumber\\
&\leq C
\bigg(
\int_{B_{4R} } \frac{(w-\ell)_-^q }{(4R)^{sq}} \,dx
+\int_{B_{4R}}  \frac{(w-\ell)_-^p}{ (4R)^p}   \,d x
+\int_{B_{4R} }|f|(w-\ell)_-\,dx\nonumber\\
&\qquad
+\int_{B_{4R} } \int_{\mathbb{R}^N \backslash  B_{2R}}  \frac{ (w(x) -\ell)_-^{q-1}(w(y)-\ell)_-}{\left|x-x_0\right|^{N+s q}} \,d x d y
\bigg)\nonumber\\
&\leq C
\bigg(\frac{\ell^{p}}{(4 R)^p}+\frac{\ell^{q}}{(4 R)^{sq}}
+\ell\| f\|_{L^\gamma\left(B_{4 R}\right)}\left|B_{4 R}\right|^{-\frac{1}{\gamma}}\bigg)\left|B_{4 R}\right|\nonumber\\
&
\qquad+C\left(\frac{\ell^q}{(4R)^{sq}}+\frac{\ell\operatorname{Tail}^{q-1}\left(w_-;x_0, 4 R\right)}{R^{s q}}\right)|B_{4R}|
\nonumber\\
&\leq C H_{4R}(\ell)\left|B_{4 R}\right|.
\end{align}
Here by the nonnegativity of $w$ in $B_{4 R}$,
 \begin{align*}
\left\|(w-\ell)_{-}\right\|^m_{L^m\left(B_{4R}\right)}\le C\ell^m|B_{4R}|  \quad \text{for any } m\ge1.
 \end{align*}

Next, according to the terms on the left-hand side of \eqref{h1},
we let $\tau=1/2$ and distinguish two mutually exclusive cases:
\begin{align}\label{cases}
\left(\frac{\delta^\tau  t}{R}\right)^p>\left(\frac{\delta^\tau  t}{R^s}\right)^q  \quad \text {and} \quad 
\left(\frac{\delta^\tau  t}{R}\right)^p \leq \left(\frac{\delta^\tau  t}{R^s}\right)^q.
\end{align}

 {\bf Case $\eqref{cases}_2$:} Let us put $\ell=4\delta^{\tau} t$ in \eqref{h1} and find that
\begin{align*}
\int_{B_{2R}} & \int_{B_{2R}} \frac{(w(x)-4 \delta^\tau t)_{+}^{q-1}(w(y)-4 \delta^\tau t)_{-}}{|x-y|^{N+sq}} \,d x d y
\le C\left(\frac{\delta^\tau  t}{R^s}\right)^q|B_{4R}|
\end{align*}
and
$$
\begin{aligned}
&\quad \int_{B_{2R}} \int_{B_{2R}} \frac{(w(x)-4 \delta^\tau t)_{+}^{q-1}(w(y)-4 \delta^\tau t)_{-}}{|x-y|^{N+s q}} \,d x d y \\
& \geq \frac{1}{(4R)^{N+sq}  }
\int_{B_{2R} \cap\{u \geq t\}}(w(x)-4 \delta^\tau t)^{q-1} \,d x \int_{B_{2R} \cap\{w<2 \delta^\tau t\}}(4 \delta^\tau t-w(y)) \,d y \\
& \geq \frac{ \delta^\tau t^q}{  CR^{sq} }
\frac{\left|B_{2R} \cap\{w \geq t\}\right|}{ |B_{2R}| }     \left|B_{2R} \cap\{w<2 \delta^\tau t\}\right| \\
& \geq \frac{ \delta^\tau t^q \nu}{C R^{sq} }\left|B_{2R} \cap\{w<2 \delta^\tau t\}\right|
\end{aligned}
$$
with $C>1$ depending on $p,q,s,N,\Lambda,a_0,A_0$, where we used the assumption \eqref{hc0}
and the fact that  $4\delta^\tau \leq 1 / 2$ and $|x-y|^{N+s q} \leq (4R)^{N+sq}$, for
any $x, y \in B_{2R}$. By virtue of two above estimates, we readily get
$$
\frac{\left|B_{2R} \cap\{w<2 \delta t\}\right|}{ |B_{2R}|  } \leq
\frac{\left|B_{2R} \cap\{w<2 \delta^\tau t\}\right|}{ |B_{2R}|  } \leq \frac{C }{\nu}\delta^{\tau(q-1)},
$$
as expected.

 {\bf Case $\eqref{cases}_1$:} Let $m \geq 7$ be the unique integer for which
$$
2^{-m-1} \leq \delta<2^{-m} .
$$
Consider the decreasing sequence $\{2^{-k} t\}_{k=0}^m$.
Notice that $2^{-k} t \in(2 \delta t, t]$ for any $k \in\{0, \ldots, m-1\}$.
Moreover, by \eqref{hc0}, it is easy to see that
 for $k\in\{1, \ldots, m-2\}$,
\begin{align}\label{ho9}
\left|B_{2R} \cap\left\{\left(w-2^{-k+1}t \right)_{-} \leq 2^{-k}t\right\}\right|
=\left|B_{2R} \cap\left\{w\geq 2^{-k}t\right\}\right|
\geq\left|B_{2R} \cap\{w\geq t\}\right| \geq \nu\left|B_{2R}\right|
\end{align}
and
\begin{align}\label{ho10}
 \left|B_{2R}\cap\left\{\left(w-2^{-k+1}t\right)_{-} \geq 3 \cdot 2^{-k-1} t\right\}\right|
=\left|B_{2R} \cap\left\{w \leq 2^{-k-1}t\right\}\right| .
\end{align}
The case \eqref{cases}$_1$ yields that
$$R^{sq-p}
> (\delta^{\tau} t)^{q-p}
> (2^{(-m-1)\tau} t)^{q-p}
>(2^{-k-1} t)^{q-p},\quad k=k_0, \ldots, m-2$$
with $k_0$ being the smallest integer bigger than $\tau m$.
This ensures that
$$ \frac{(2^{-k+1} t)^{q}}{R^{s q}}\le\frac{ C(2^{-k+1} t)^{p}  }{R^p},\quad k=k_0, \ldots, m-2.$$
Since $2^{-k+1} \ge \delta $, 
it follows from the last display and \eqref{h1} with $\ell=2^{-k+1} t$ that
\begin{align}\label{ho11}
  \left[(w-2^{-k+1} t)_{-}\right]_{W^{1,p}\left(B_{2R}\right)}^p
& \leq  CH_{4 R}(2^{-k+1} t)\left|B_{4 R}\right| \nonumber\\
& \leq C\left(\frac{(2^{-k+1} t)^{p}}{R^p}+ \frac{(2^{-k+1} t)^q}{R^{s q}}\right)\left|B_{4 R}\right|, \nonumber\\
&\leq C  R^{N-p} (2^{-k+1} t)^{p},\quad k=k_0, \ldots,m-2.
\end{align}
 Consequently, we can apply Lemma \ref{measure1} to the function $\left(w-2^{-k+1} t\right)_{-}$, with $h_1=2^{-k} t$
 and $h_2=3 \cdot 2^{-k-1} t$. We easily get
\begin{align*}
&\quad\Bigg(  \frac{\big|B_{2R} \cap\{\left(w-2^{-k+1} t\right)_{-} \leq 2^{-k} t\} \big|}{ |B_R|  }
\cdot
\frac{\big| B_{2R} \cap\{\left(w-2^{-k+1} t\right)_{-} \geq 3 \cdot 2^{-k-1} t\}\big|}{ |B_R| }
\Bigg)^{\frac{N-1}{N}}\\
&\leq \frac{C 2^{k}R^{1-\frac{N}{p}} }{t}\left[\left(w-2^{-k+1} t\right)_{-}\right]_{W^{1, p}\left(B_{2R}\right)}
\Bigg(
\frac{\big|B_{2R} \cap\{2^{-k} t<\left(w-2^{-k+1} t\right)_{-}<3 \cdot 2^{-k-1} t\}\big|}{    |B_R|    }
\Bigg)^{\frac{p-1}{p}},
\end{align*}
which combined with \eqref{ho9} and \eqref{ho10} yields that
\begin{align*}
& \quad \Bigg(\frac{\left|B_{2R} \cap\left\{w \leq 2^{-k-1} t\right\}\right|}{ |B_{2R}| }
\Bigg)^{\frac{N-1}{N}}\\
& \leq \frac{CR^{1-\frac{N}{p}} 2^k}{\nu^{\frac{N-1}{N}} t}
 \left[\left(w-2^{-k+1} t\right)_{-}\right]_{W^{1, p}\left(B_{2R}\right)}
 \Bigg(\frac{\big|B_{2R}  \cap\left\{2^{-k-1} t<w<2^{-k} t\right\}\big|}{  |B_{2R}|  }
 \Bigg)^{\frac{p-1}{p}}
\end{align*}
for some $C >0$ depending only on $N$ and $p$. We can control the Gagliardo seminorm of $\left(w-2^{-k+1}t\right)_{-}$ according to
\eqref{ho11} and deduce that, for any $k \in\{k_0, \ldots, m-2\}$,
$$
\Bigg(\frac{\left|B_{2R} \cap\left\{w \leq 2^{-k-1} t\right\}\right|}{ |B_{2R}| }
\Bigg)^{\frac{N-1}{N}}
 \leq
 \frac{C }{\nu^{\frac{N-1}{N}} }
 \Bigg(\frac{\big|B_{2R}  \cap\left\{2^{-k-1} t<w<2^{-k} t\right\}\big|}{  |B_{2R}|  }
 \Bigg)^{\frac{p-1}{p}}.
$$
By adding up the above inequality as $k$ ranges between $k_0$ and $m-2$,
we find
$$
(m-2-k_0)\Big(\frac{\left|B_{2R} \cap\{w<2 \delta t\}\right|}{ |B_{2R}| }  \Big)^{\frac{(N-1) p}{N(p-1)}}
 \leq  \frac{C}{\nu^{\frac{(N-1) p}{N(p-1)}}    }
 \sum_{i=k_0}^{m-2}
 \frac{\left|B_{2R} \cap\left\{2^{-k-1} t<w<2^{-k} t\right\}\right|}{  |B_{2R}| }
 \leq  \frac{C}{ \nu^{\frac{(N-1) p}{N(p-1)}}    },
$$
which in turn yields that
$$
\frac{\left|B_{2R} \cap\{w<2 \delta t\}\right|}{ |B_{2R}| }
 \leq  \frac{C}{ \nu  }|\log (\delta^{1-\tau})|^{-\frac{N(p-1)}{(N-1) p}}.
$$
The proof is therefore complete.
\end{proof}

Based on the information of measure theory above, we can get the following pointwise result:

\begin{lemma}\label{lem4.2}
Assume that $K_{sq}$ and $F$ satisfy \eqref{F} and \eqref{K} with $a_0>0$. Let $B_{4 R}:=B_{4R}(x_0) \subset\subset\Omega$ with $R \leq 1$.
Let $u \in \mathcal{A}(\Omega) \cap L_{s q}^{q-1}\left(\mathbb{R}^N\right)$ be a minimizer of \eqref{q1}, where the function $f|_{B_{4 R}}$ belongs to $L^{\gamma}(B_{4 R})$ with $\gamma>\max\{\frac{N}{p},1\}$. Assume that for some $M\in\Bbb{R}$, $u\ge M$ in $B_{4 R}$ and
$$
\left|B_{2 R} \cap\{u-M \geq t\}\right| \geq \nu\left|B_{2 R}\right|
$$
with some $\nu \in(0,1)$ and $t>0$. Then there exists $\delta\in\left(0,\frac{1}{2^8}\right]$, which depends only on the absolute constants $N, p,q,s,\Lambda,a_0,A_0$ and $\nu$, such that whenever
\begin{align}\label{hol1}
\| f\|_{L^\gamma\left(B_{4 R}\right)}\left|B_{4 R}\right|^{-\frac{1}{\gamma}}
+(4R)^{-sq}\operatorname{Tail}^{q-1}\left((u-M)_{-} ; x_0, 4 R\right)
\leq h_{4R}(\delta t)
\end{align}
with $h_{4 R}(\delta t):= \frac{(\delta t)^{p-1}}{(4 R)^p}+ \frac{(\delta t)^{q-1}}{(4 R)^{sq}}$ as determined in \eqref{ha},
then we can find that
\begin{align}\label{hol3l}
u-M\geq \delta t \quad \text { in } B_R.
\end{align}
\end{lemma}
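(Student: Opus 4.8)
The plan is to obtain \eqref{hol3l} by a De Giorgi iteration toward the level $\delta t$, with Lemma \ref{lem4.1} used only to make the initial quantity of the iteration small. Set $w:=u-M$; then $w\ge0$ in $B_{4R}$ and, as in the proof of Lemma \ref{lem4.1}, $w$ is again a minimizer of $\mathcal{E}$. Since \eqref{hol1} coincides with the hypothesis \eqref{hc} of Lemma \ref{lem4.1}, that lemma gives, for every admissible $\delta$,
\[
\frac{\bigl|B_{2R}\cap\{w<2\delta t\}\bigr|}{|B_{2R}|}\ \le\ \frac{C}{\nu}\Bigl(\delta^{(q-1)/2}+|\log\delta|^{-\frac{N(p-1)}{(N-1)p}}\Bigr)\ =:\ \mu(\delta),
\]
and $\mu(\delta)\to0$ as $\delta\to0$. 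The value of $\delta$ will be fixed only at the very end, so that $\mu(\delta)$ falls below a threshold depending solely on the structural constants; in particular there is no circularity.

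For the iteration put $\rho_j:=(1+2^{-j})R\downarrow R$, $k_j:=(1+2^{-j})\delta t\downarrow\delta t$, so that $B_{\rho_j}\subset B_{2R}$ and $0\le(w-k_j)_-\le k_j\le2\delta t$, and introduce the dimensionless quantity
\[
Y_j:=\frac{1}{H_{4R}(\delta t)}\,\dashint_{B_{\rho_j}}H_{4R}\bigl((w-k_j)_-\bigr)\,dx .
\]
On each pair $B_{\rho_{j+1}}\subset B_{\rho_j}$ I would run the computation of Lemmas \ref{lem3.2}--\ref{lem3.3} for $(w-k_j)_-$: apply the Caccioppoli inequality (Lemma \ref{lemCa}); use the level gap $k_j-k_{j+1}=2^{-j-1}\delta t$ to bound $(w-k_{j+1})_-$ by $H_{4R}((w-k_j)_-)/H_{4R}(k_j-k_{j+1})$ in the lower order and forcing terms; estimate the tail and the $f$ contributions exactly as in \eqref{h1} — since $w\ge0$ in $B_{4R}$, $k_j\le2\delta t$, the left-hand side of \eqref{hol1} is $\le h_{4R}(\delta t)$, and $h_{4R}(2^{-j-1}\delta t)\ge2^{-(j+1)(q-1)}h_{4R}(\delta t)$, these contributions produce only harmless powers of $2^{j}$. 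With $a_0>0$ this bounds $\|D(w-k_j)_-\|_{L^p(B_{\rho_{j+1}})}^p$ and $a_0\,[(w-k_j)_-]_{W^{s,q}(B_{\rho_{j+1}})}^q$ by $C\,2^{j\beta}H_{4R}(\delta t)\,Y_j\,|B_{\rho_j}|$. Feeding this into the Sobolev--Poincar\'e embedding $W^{1,p}\hookrightarrow L^{p^*}$ (for the summand $t^{p}/(4R)^{p}$ of $H_{4R}$) and the fractional Sobolev embedding $W^{s,q}\hookrightarrow L^{q_s^*}$ (for the summand $t^{q}/(4R)^{sq}$; available precisely because $a_0>0$), and combining with the Chebyshev bound
\[
\bigl|B_{\rho_j}\cap\{(w-k_j)_->2^{-j-1}\delta t\}\bigr|\ \le\ 2^{(j+1)q}\,Y_j\,|B_{\rho_j}|,
\]
I expect, after the bookkeeping of Lemmas \ref{lem3.2}--\ref{lem3.3} in which the weights $(4R)^{-p}$, $(4R)^{-sq}$ cancel the powers of $R$ carried by the volume factors $|B_{\rho_j}|^{1-p/p^*}$, $|B_{\rho_j}|^{1-q/q_s^*}$ from H\"older, a recursion of the form
\[
Y_{j+1}\ \le\ C\,b^{j}\Bigl(Y_j^{1+\frac1{\kappa'}}+Y_j^{1+(\frac1{\kappa'}+\frac1{\gamma'}-1)}\Bigr),
\]
with $\kappa:=\min\{p^*/p,\,q_s^*/q\}$ (taken as $+\infty$ if $p\ge N$, resp.\ $sq\ge N$), $\tfrac1\kappa+\tfrac1{\kappa'}=1$, $b>1$, and $C$ depending on $N,p,q,s,\Lambda,a_0,A_0$; both exponents exceed $1$, the second one because $\gamma>N/p$ forces $\tfrac1{\kappa'}+\tfrac1{\gamma'}>1$.

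Since $H_{4R}(2\delta t)\le2^{q}H_{4R}(\delta t)$, one has $Y_0\le2^{q}\mu(\delta)$. Choosing now $\delta\in(0,2^{-8}]$, depending only on $N,p,q,s,\Lambda,a_0,A_0,\nu$, so small that $2^{q}\mu(\delta)$ lies below the smallness threshold of Lemma \ref{lem-iteration} attached to the constants $C,b$ and the two exponents above, that lemma gives $Y_j\to0$ as $j\to\infty$, i.e.\ $\dashint_{B_R}H_{4R}\bigl((w-\delta t)_-\bigr)\,dx=0$, which is precisely $u-M\ge\delta t$ a.e.\ in $B_R$, as required by \eqref{hol3l}.

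The step I expect to be most delicate is the derivation of the super-linear recursion with a constant independent of $R$ and of $t$: this requires a careful accounting of the mixed $p$-versus-$sq$ homogeneity of $H_{4R}$ and $h_{4R}$ (so that the weights $(4R)^{-p}$, $(4R)^{-sq}$ precisely absorb the powers of $R$ from the volume factors), together with a careful handling of the cut-off error when passing from $[(w-k_j)_-]_{W^{s,q}(B_{\rho_{j+1}})}$ to the global Gagliardo seminorm of the localized function — it is at this point that the tail of $(u-M)_-$, and hence hypothesis \eqref{hol1}, re-enters. As already noted by the authors before the statement, the constant $C$ genuinely depends on $a_0$, which is the reason this argument requires $a_0>0$.
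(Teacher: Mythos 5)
Your proposal is correct and follows essentially the same route as the paper: a De Giorgi iteration on the balls $(1+2^{-j})R$ and levels $(1+2^{-j})\delta t$, with the Caccioppoli inequality of Lemma \ref{lemCa} and hypothesis \eqref{hol1} controlling the tail and source terms, $\gamma>N/p$ supplying the super-linear exponent, Lemma \ref{lem4.1} furnishing the initial smallness, Lemma \ref{lem-iteration} closing the argument, and $\delta$ fixed only at the end so there is no circularity. The only (cosmetic) difference is the iterated quantity: you recycle the energy bookkeeping of Lemmas \ref{lem3.2}--\ref{lem3.3} for the negative truncations, whereas the paper iterates the relative measures $\left|A^{-}(k_i,x_0,r_i)\right|/\left|B_{r_i}\right|$ via the Sobolev--Poincar\'e inequalities \eqref{hol9}--\eqref{hol9'}, arriving at a one-term recursion with exponent $\kappa(\gamma-1)/\gamma>1$.
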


\begin{proof}[\bf Proof.]
We still set $w:=u-M$.
Let $\delta \in(0, 2^{-8}]$ and $\epsilon \in\left(0,2^{-N-1}\right]$ to be specified later.
We initially suppose that
\begin{align}\label{hol6}
\left|B_{2R} \cap\{w<2 \delta t\}\right| \leq \epsilon\left|B_{2R}\right|
\end{align}
with sufficiently small $\epsilon$.
We arbitrarily choose radii $\rho,r$ satisfying $R/2\le r/2<\rho<r\le 2R$.
In view of \eqref{hol6} with $\epsilon\le 2^{-N-1}$, we have that
for any $k \in[\delta t, 2\delta t]$ and
$$
\begin{aligned}
\left|B_\rho \cap\left\{(w-k)_{-}=0\right\}\right|
& =\left|B_\rho \backslash\{w<k\}\right|
\geq \left|B_\rho\right|-\left|B_{2R} \cap\{w<2 \delta t\}\right| \\
& \geq \left(1-\epsilon\left(\frac{2R}{\rho}\right)^N\right)
\left|B_\rho\right|
 \geq \left(1-2^N \epsilon\right)\left|B_\rho\right| \\
& \geq \frac{1}{2}\left|B_\rho\right| .
\end{aligned}
$$
This enables us to utilize the Poincar\'{e}-Sobolev inequality and find that
\begin{align}\label{hol9}
\left(k-h\right)^q\left(\frac{\left|B_{\rho} \cap\left\{w<h\right\}\right|}{\left|B_{\rho }\right|}\right)^{\frac{q}{q_s^*}}
& \leq \left(\dashint_{B_{\rho} } (w-k)_{-}^{q_s^*} \, d x\right)^{\frac{q}{q_s^*}} \nonumber\\
& \leq C
 \rho^{ s q} \dashint_{B_\rho } \int_{B_\rho }\left|(w(x)-k)_{-} -(w(y)-k)_{-} \right|^q \,d\mu
\end{align}
and
\begin{align}\label{hol9'}
\left(k-h\right)^p
\left(\frac{\left|B_{\rho} \cap\left\{w<h\right\}\right|}{\left|B_{\rho }\right|}\right)^{\frac{ p}{p^*}}
& \leq\left(\dashint_{B_{\rho} } (w-k)_{-}^{p^*}\, dx\right)^{\frac{ p}{p^*}} \nonumber\\
& \leq C
 \rho^{ p} \dashint_{B_{\rho}} |D\big( (w-k)_{-} \big)|^p \,d x
\end{align}
for any $h\in(\delta t,k)$.
With setting
$$\kappa:=\frac{p^*}{p}<\frac{q_s^*}{q},$$
we derive from \eqref{hol9}, \eqref{hol9'} and Lemma \ref{lemCa} that
\begin{align}\label{hol11'}
&\quad\bigg( \left(\frac{k-h}{\rho^s}\right)^q+\left(\frac{k-h}{\rho}\right)^p \bigg)
\left(\frac{\left|B_{\rho} \cap\left\{w<h\right\}\right|}{\left|B_{\rho }\right|}\right)^{\frac{1}{\kappa}} \nonumber\\
& \leq C \dashint_{B_\rho } \int_{B_\rho }\left|(w(x)-k)_{-} -(w(y)-k)_{-} \right|^q\, d \mu
+\dashint_{B_{\rho}} |D\big( (w-k)_{-} \big)|^p \,d x
\nonumber\\
&\leq C\left(\frac{ r}{r-\rho}\right)^{N+q}
\bigg( \dashint_{B_r} \frac{(w-k)_{-}^q }{r^{sq}} \,dx
+\dashint_{B_{r}}  \frac{(w-k)_{-} ^p}{ r^p}   \,d x\nonumber\\
&\qquad\qquad\qquad\qquad+
\dashint_{B_r} \int_{\mathbb{R}^N \backslash  B_{\rho}}  \frac{a(x,y)(w(x)-k)_{-}^{q-1}(w(y)-k)_{-}}{\left|x-x_0\right|^{N+s q}} \,d x d y
+\dashint_{B_{r}}|f|(w-k)_{-}\bigg),
\end{align}
where the left-hand right can be estimated as below,
\begin{align}
\bigg( \left(\frac{k-h}{\rho^s}\right)^q+\left(\frac{k-h}{\rho}\right)^p \bigg)
\left(\frac{\left|B_{\rho} \cap\left\{w<h\right\}\right|}{\left|B_{\rho }\right|}\right)^{\frac{1}{\kappa}}
\ge H_{4R}(k-h)\left(\frac{\left|B_{\rho} \cap\left\{w<h\right\}\right|}{\left|B_{\rho }\right|}\right)^{\frac{1}{\kappa}} .
\end{align}
Then we utilize the following fact
\begin{align}
 \left\|(w-k)_{-}\right\|_{L^m\left(B_{r}\left(x_0\right)\right)}^m
\le C k^{m}\left|A^{-}\left(k,x_0,r\right)\right|\text{  for any  } m\ge1
\end{align}
to estimate first two integrals in \eqref{hol11'}.
Due to \eqref{hol1}, we can see
\begin{align}\label{hol11}
&\quad\int_{B_{r} } \int_{\mathbb{R}^N \backslash  B_{\rho}}  \frac{a(x,y)(w(x)-k)_{-}^{q-1}(w(y)-k)_{-}}{\left|x-x_0\right|^{N+s q}}  \,d x d y\nonumber\\
&\leq \int_{B_{r} } (w(y)-k)_-\bigg(\int_{\mathbb{R}^N \backslash  B_{4R}}  \frac{a(x,y)w_-^{q-1}(x)}{\left|x-x_0\right|^{N+s q}} \,d x\bigg) d y\nonumber\\
&\quad+
\int_{B_{r} } (w(y)-k)_-\bigg(\int_{\mathbb{R}^N \backslash  B_{2R}} \frac{k^{q-1}}{\left|x-x_0\right|^{N+s q}} \,d x\bigg) d y\nonumber\\
& \leq C k\left|A(k,x_0,r)\right| \operatorname{Tail}^{q-1}\left(w_-; x_0, 2 R\right)
+Ck^q\left|A(k,x_0,r)\right|\left(R^{-p}+ R^{-s q}\right)\nonumber\\
&\leq CH_{4R}(k)\left|A(k,x_0,r)\right|.
\end{align}
Here the positive constant $C$ depends on $N,p,q,s,A_0$ and $\Lambda$.
Moreover, we estimate $\int_{B_{r}}|f|(w-k)_{-}\,dx$ as below,
\begin{align}\label{hol11''}
\int_{B_{r}}|f|(w-k)_{-} \,dx&
\leq k\| f\|_{L^\gamma\left(B_{r}\right)}\left|A^{-}(k,x_0, r)\right|^{\frac{\gamma-1}{\gamma}}\nonumber\\
&
=k\| f\|_{L^\gamma\left(B_{r}\right)}\left|B_{r}\right|^{-\frac{1}{\gamma}}
\frac{\left|A^{-}(k,x_0, r)\right|^{\frac{\gamma-1}{\gamma}}}{   \left|B_{r}\right|^{-\frac{1}{\gamma}}  } \nonumber\\
& \leq k h_{4 R}(\delta t) \frac{\left|A^{-}(k,x_0, r)\right|^{\frac{\gamma-1}{\gamma}}}{   \left|B_{r}\right|^{-\frac{1}{\gamma}}  }\nonumber\\
&
\leq H_{4 R}(k) \frac{\left|A^{-}(k,x_0, r)\right|^{\frac{\gamma-1}{\gamma}}}{   \left|B_{r}\right|^{-\frac{1}{\gamma}}  }
\end{align}
because of \eqref{hol1}.
Combining \eqref{hol11'}--\eqref{hol11''} tells that
$$
\left(\frac{\left|A^{-}(h, x_0,\rho)\right|}{\left|B_\rho\right|}\right)
\leq C\left(\frac{r}{r-\rho}\right)^{(N+q) \kappa}
\left(\frac{H_{4R}(k)}{H_{4 R}(k-h)}\right)^\kappa\left(\frac{\left|A^{-}(k, x_0,r)\right|}{\left|B_r\right|}\right)^{\frac{\gamma-1}{\gamma} \kappa}.
$$
Consider the sequences $\left\{r_i\right\}_{i=0}^\infty$ and $\left\{k_i\right\}_{i=0}^\infty$
defined by
$$r_i:=(1+2^{-i})R \quad \text{and} \quad  k_i:=\left(1+2^{-i}\right) \delta t.$$
Also set $Y_i:=\left|A^{-}\left(k_i, x_0, r_i\right)\right| /\left|B_{r_i}\right|$.
By applying \eqref{hol9} with $h=k_i, k=k_{i-1}, \rho=r_i$ and $r=r_{i-1}$, we obtain that
\begin{align*}
Y_{i+1} \leq C 2^{i(N+2 q) \kappa} Y_i^{ \frac{\kappa(\gamma-1)}{\gamma}},\qquad i=1,2,3, \ldots.
\end{align*}
The assumption on $\gamma$ ensures that
$$ \frac{\kappa(\gamma-1)}{\gamma}>1.$$
For $p \geq N$, we could take $\kappa$ larger than $\frac{\gamma}{\gamma-1}$ ahead of time. Then in order to exploit the convergence lemma, we force
$$
Y_0=\frac{|A^{-}\left(2\delta t, x_0, 2R\right)|}{\left|B_{2 R}\right|}
\leq (2C)^{-\frac{1}{\kappa(\gamma-1) /\gamma-1}}2^{-\frac{\kappa(N+2q)}{(\kappa(\gamma-1) /\gamma-1)^2}}
=:\theta,
$$
which can be realized by choosing $\delta$ sufficiently small. Through Lemma \ref{lem4.1},
 we select $\delta \in\left(0, \frac{1}{2^8}\right]$, that depends on
$ p, q, s, N,\Lambda$ and $a_0,A_0$, such that
$$
\frac{C }{\nu}\Big(\delta^{(q-1)/2}
+ |\log \delta|^{-\frac{N(p-1)}{(N-1) p}}\Big)
 \leq \min\{\theta,2^{-N-1}\} .
$$
Then we can infer from this display and Lemma \ref{lem-iteration} that
$$
\lim _{i \rightarrow \infty} Y_i=0,
$$
which directly guarantees that $w \geq \delta t$ in $B_R$.
Hence, under the above choice of $\delta$ in the statement of this lemma, the positivity expansion result \eqref{hol3l} follows clearly.
\end{proof}

We now end this section by giving the proof of H\"{o}lder continuity, in which we need pay much attention to the constant $\delta$ in Lemma \ref{lem4.2} independent of the arbitrary number $M$. For this reason, it is possible to get the desired result as follows.

\medskip

\noindent{\bf Proof of Theorem \ref{th3}.}
Let $\delta \in(0,2^{-8}]$ be the constant found in Lemma \ref{lem4.2}.
By Lebesgue's dominated convergence theorem, we can find small $\alpha$ satisfying
\begin{align}\label{hold1}
0<\alpha \leq \min \left\{\frac{s}{2 }\, ,\, \log_4\left(\frac{2}{2-\delta}\right)\, ,\, \frac{p\gamma-N }{2\gamma(p-1)}\right\}
\end{align}
and
\begin{align}\label{hold2}
\int_4^{+\infty} \frac{\left(\rho^\alpha-1\right)^{q-1}}{\rho^{1+ sq}} d \rho
 \leq \frac{\delta^{q-1}}{32^{q+1} N \left|B_1\right|} .
\end{align}
Then we set
\begin{align}\label{hold3}
j_0:=\max\Bigg\{\frac{2}{sq} \log _4\left(\frac{32^{q+1} N(1+\left|B_1\right|)}{s \delta^{q-1}}\right),
 \frac{2\gamma(p-1)}{p\gamma - N} \log _4\left(\frac{4}{ \delta|B_1|^{\frac{1}{\gamma(p-1)}}}\right)
 \Bigg\} .
\end{align}
What follows is to utilize induction arguments to prove that there exist a non-decreasing sequence $\left\{m_i\right\}_{i=0}^\infty$
and a non-increasing sequence $\left\{M_i\right\}_{i=0}^\infty$ of real numbers such that
\begin{align}\label{hold4}
 m_i \leq u \leq M_i\quad \text{   in }B_{4^{1-i} R},\qquad i=0,1,2,3, \ldots
 \end{align}
 and
 \begin{align}\label{hold5}
 M_i-m_i=4^{-\alpha i} L,\qquad i=0,1,2,3, \ldots
 \end{align}
  with
\begin{align}\label{hold6}
 L:=2 \cdot 4^{\frac{s j_0}{2 }}
 \|u\|_{L^{\infty}(B_{4 R(x_0)})}+\operatorname{Tail}(u ; x_0, 4 R)
 +\|f\|^{\frac{1}{p-1}}_{L^\gamma(B_{4 R(x_0)})}.
  \end{align}
Let us take $m_i:=-4^{-\alpha i} L / 2$ and $M_i:=4^{-\alpha i} L / 2$, for any $i=0, \ldots, j_0$.
Then, \eqref{hold4} holds for these $i$ 's, thanks to \eqref{hold3} and \eqref{hold6}.
Now we fix an integer $j \geq j_0$ and suppose that the sequences $\left\{m_i\right\}_{i=1}^{j}$
 and $\left\{M_i\right\}_{i=1}^{j}$ have been constructed.
Our expected claim \eqref{hold4} will be proved once we find proper $m_{j+1}$ and $M_{j+1}$.

Let us define the function
$$
v:=\frac{2 \cdot 4^{\alpha j}}{L}\left(u-\frac{M_j+m_j}{2}\right)\quad \text{   in }\mathbb{R}^N .
$$
By \eqref{hold4}, \eqref{hold5} and the monotonicity of $\left\{m_i\right\}_{i=1}^{j}$, $\left\{M_i\right\}_{i=1}^{j}$,
we can obtain that
\begin{align}\label{hold6'}
\left|M_j+m_j\right| \leq \left(1-4^{-\alpha j}\right) L .
\end{align}
Since $\max\{2u-M_j-m_j, M_j+m_j-2u\}\le M_j-m_j $ in $B_{4^{1-j} R}$,
it is clear that
$$|v|\le\frac{2 \cdot 4^{\alpha j}}{L}\left(\frac{M_j-m_j}{2}\right)\quad \text{   in } B_{4^{1-j} R},$$
then
$$|v| \leq 1\quad \text{   in } B_{4^{1-j} R}.$$
Take $x \in B_{4 R} \backslash B_{4^{1-j} R}$ and
let $\ell \in\{0, \ldots, j-1\}$ be the unique integer for which
$x \in B_{4^{1-\ell} R} \backslash B_{4^{-\ell} R}$. By virtue of \eqref{hold4}, \eqref{hold5}
and the monotonicity of $\left\{m_i\right\}_{i=1}^{j}$,
 we have
$$
\begin{aligned}
v(x) & \leq \frac{2 \cdot 4^{\alpha j}}{L}\left(M_{\ell}-m_{\ell}+m_{\ell}-\frac{M_j+m_j}{2}\right)\\
&\leq \frac{2 \cdot 4^{\alpha j}}{L}\left(M_{\ell}-m_{\ell}+m_j-\frac{M_j+m_j}{2}\right) \\
& =\frac{2 \cdot 4^{\alpha j}}{L}\left(M_{\ell}-m_{\ell}-\frac{M_j-m_j}{2}\right)=2 \cdot 4^{\alpha(j-\ell)}-1 \\
& \leq 2\left( \frac{4^j|x|}{R}\right)^\alpha-1 .
\end{aligned}
$$
An application of similar arguments ensures that $v(x) \geq-2\left(4^j|x| / R\right)^\alpha+1$, and hence
\begin{align}\label{hold10}
(1 \pm v(x))_{-}^{q-1} \leq 2^{q-1}\left(\left( \frac{4^j|x|}{R}\right)^\alpha-1\right)^{q-1}
\quad \text { for a.a. } x \in B_{4 R} \backslash B_{4^{1-j} R} .
\end{align}
Meanwhile, we can derive from \eqref{hold6'} that
\begin{align}\label{hold11}
(1 \pm v(x))_{-}^{q-1} \leq 2^{q-1}\left(\left(\frac{2 \cdot 4^{\alpha j}}{L}\right)^{q-1}|u|^{q-1}+4^{\alpha(q-1) j}\right)
\quad \text { for a.a. }\mathbb{R}^N \backslash B_{4 R}.
\end{align}
With the help of \eqref{hold10}, \eqref{hold11} and changing variables appropriately, we obtain
\begin{align*}
& \quad\operatorname{Tail}\left((1 \pm v)_{-} ; x_0, 4^{1-j} R\right)^{q-1} \\
&\leq 4^{-j s q+sq+q-1} R^{s q}
\Bigg(\int_{\mathbb{R}^N \backslash B_{4^{1-j}R} } \frac{ \Big(\left( 4^j|x|/R\right)^\alpha-1\Big)^{q-1}}{|x|^{N+s q}} \,d x\\
& \quad+\left(\frac{4^{\alpha j}}{L}\right)^{q-1} \int_{\mathbb{R}^N \backslash B_{4 R}} \frac{|u(x)|^{q-1}}{|x|^{N+s q}} \,d x
+4^{\alpha(q-1) j} \int_{\mathbb{R}^N \backslash B_{4 R}} \frac{\,d x}{|x|^{N+s q}}\Bigg)
\\
& \leq
 8^q N \left|B_1\right|\int_4^{+\infty} \frac{\left(\rho^\alpha-1\right)^{q-1}}{\rho^{1+sq}} d \rho
 +8^q 4^{\left(\alpha q-sq\right) j}  \frac{\operatorname{Tail}(u ; x_0,4 R)^{q-1}}{L^{q-1}}
\\
& \quad+ \frac{8^q N \left|B_1\right|4^{\left(\alpha q-sq\right) j}}{sq }\\
& \leq
 8^q N \left|B_1\right|\int_4^{+\infty} \frac{\left(\rho^\alpha-1\right)^{q-1}}{\rho^{1+sq}} d \rho
 + \frac{8^{q+1} N  (\left|B_1\right|+1)4^{\left(\alpha q-sq\right) j}}{s}.
\end{align*}
As a consequence of \eqref{hold2}, \eqref{hold3} and \eqref{hold6}, it holds that
\begin{align}\label{hold11'}
\operatorname{Tail}\left((1 \pm v)_{-} ; x_0, 4^{1-j} R\right) \leq \frac{\delta}{4} .
\end{align}
Now, we have that either
 \begin{align}\label{hold12}
 \left|B_{4^{1-j} R / 2} \cap\{v \geq 0\}\right| \geq \frac{1}{2}\left|B_{4^{1-j} R / 2}\right|
 \text { or } \left|B_{4^{1-j} R / 2} \cap\{v \geq 0\}\right|<\frac{1}{2}\left|B_{4^{1-j} R / 2}\right|.
 \end{align}
If the first choice of \eqref{hold12} happens, we consider the function
$$\frac{L}{2\cdot 4^{\alpha j}}(1+v)
=u-\frac{M_j+m_j}{2}+\frac{L}{2\cdot 4^{\alpha j}}.$$
Then, we have
$$
\left|B_{4^{1-j} R / 2} \cap
\Big\{u-\frac{M_j+m_j}{2}+\frac{L}{2\cdot 4^{\alpha j}} \geq \frac{L}{2\cdot 4^{\alpha j}}\Big\}\right|
=\left|B_{4^{1-j} R / 2} \cap\{v \geq 0\}\right|
 \geq\frac{1}{2}\left|B_{4^{1-j} R / 2}\right| .
$$
It follows by \eqref{hold3} and \eqref{hold6} that
\begin{align}\label{hold11'}
\| f\|_{L^\gamma\left(B_{4 ^{1-j}R}\right)}\left|B_{4 ^{1-j}R}\right|^{-\frac{1}{\gamma}}
\le|B_1|^{-\frac{1}{\gamma}}\| f\|_{L^\gamma\left(B_{4 R}\right)}(4 ^{1-j}R)^{-\frac{N}{\gamma}}\le
\Big(\frac{1}{4 ^{-j}R}\Big)^{p}\Big(\frac{\delta}{2}\cdot \frac{L}{2\cdot 4^{\alpha j}}\Big)^{p-1}.
\end{align}
Moreover, there holds that
$$\text {Tail}\left( \Big(u-\frac{M_j+m_j}{2}+\frac{L}{2\cdot 4^{\alpha j}}\Big)_{-} ; x_0, 4^{1-j} R\right)
=\frac{L}{2\cdot 4^{\alpha j}}\text {Tail}\left((1 + v)_{-} ; x_0, 4^{1-j} R\right)
\le\frac{\delta}{4}\cdot \frac{L}{2\cdot 4^{\alpha j}}.$$
We utilize Lemma \ref{lem4.2} with $t=\frac{L}{2\cdot 4^{\alpha j}}$ and
$M=\frac{M_j+m_j}{2}-\frac{L}{2\cdot 4^{\alpha j}} $ to find that
$$u-\frac{M_j+m_j}{2}+\frac{L}{2\cdot 4^{\alpha j}}
\ge\frac{L}{2\cdot 4^{\alpha j}}\delta\quad \text{   in }  B_{ 4^{-j}R }.$$
This directly tells that
\begin{align*}
u&\ge\frac{M_j+m_j}{2}+\frac{L}{2\cdot 4^{\alpha j}}\delta-\frac{L}{2\cdot 4^{\alpha j}}\\
&=M_j-\frac{M_j-m_j}{2}-\frac{L}{2\cdot 4^{\alpha j}}(1-\delta)\\
&=M_j-\frac{L}{2\cdot 4^{\alpha j}}(2-\delta)\quad \text{   in }  B_{ 4^{-j}R }.
\end{align*}
In view of \eqref{hold1}, we can derive that
$$
M_j-4^{-(j+1) \alpha} L \leq u \leq M_j \quad \text{   in }  B_{4^{-j} R}.
$$
This directly guarantees \eqref{hold4} for $i=j+1$ with $M_{j+1}:=M_j$ and $m_{j+1}:=M_{j+1}-4^{-(j+1) \alpha} L$.
If instead the second alternative in \eqref{hold12} holds, we shall deal with the function $\frac{(1-v)L}{2\cdot 4^{\alpha j} }$ and utilize an analogous argument as above to obtain the same conclusion by taking $m_{j+1}:=m_j$
and $M_{j+1}:=m_{j+1}+4^{-(j+1) \alpha} L$.
\qquad$\Box$

\section{Harnack inequality}

This section is devoted to establishing Harnack estimates on the minimizers of \eqref{q1}. The forthcoming lemma can be inferred in a very similar way to Lemma \ref{lem4.2} with $M=0$.

\begin{lemma}\label{lem5.1}
Assume $K_{sq}$ and $F$ satisfy \eqref{F} and \eqref{K} with $a_0>0$. Let $u \in \mathcal{A}(\Omega)\cap L_{sq}^{q-1}\left(\mathbb{R}^N\right)$
be a minimizer of \eqref{q1} which is nonnegative in a ball $B_{16 R}:=B_{16R}(x_0) \subset\subset\Omega$ with $R \leq 1$.
The function $f|_{B_{16 R}}$ belongs to $L^{\gamma}(B_{16 R})$ with $\gamma>1$. Suppose that
$$
\left|B_{ R} \cap\{u \geq t\}\right| \geq \nu^{k}\left|B_{R}\right|
$$
for some $\nu \in(0,1)$, $t>0$ and $k\in \mathbb{N}^+$. Then there exists $\delta \in\left(0, \frac{1}{2^8}\right]$ that depends only on $N,p,q,s,a_0,A_0$ and $\nu$, if
$$
\| f\|_{L^\gamma\left(B_{16 R}\right)}\left|B_{16 R}\right|^{-\frac{1}{\gamma}}
+(16R)^{-sq}\operatorname{Tail}^{q-1}\left(u_{-} ; x_0, 16 R\right)
\leq h_{16R}(\delta^kt)
$$
with $h_{16 R}(\delta t):= \frac{(\delta^k t)^{p-1}}{(16 R)^p}+\frac{(\delta^k t)^{q-1}}{(16 R)^{sq}}$ as given in \eqref{ha},
then there holds that
\begin{align*}
u\geq \delta^kt \quad \text { in } B_R.
\end{align*}
\end{lemma}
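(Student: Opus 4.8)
The plan is to argue by induction on $k$. The case $k=1$ is Lemma \ref{lem4.2} applied with $M=0$ on the ball $B_{4R}\subset B_{16R}$: from $|B_R\cap\{u\ge t\}|\ge\nu|B_R|$ one gets $|B_{2R}\cap\{u\ge t\}|\ge 2^{-N}\nu|B_{2R}|$, and the smallness hypothesis transfers because $u_-\equiv 0$ on $B_{16R}$ — so that $\rho^{-sq}\operatorname{Tail}^{q-1}(u_-;x_0,\rho)$ is one and the same number for every $\rho\in(0,16R]$ — and because $\|f\|_{L^\gamma(B_{4R})}|B_{4R}|^{-1/\gamma}\le C\,\|f\|_{L^\gamma(B_{16R})}|B_{16R}|^{-1/\gamma}$, so that $h_{16R}(\delta t)$ controls $c\,h_{4R}(\delta t)$ once $\delta$ is shrunk by an absolute amount. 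Lemma \ref{lem4.2} then gives $u\ge\delta t$ in $B_R$, which fixes the first admissible value of $\delta\in(0,2^{-8}]$ in terms of $N,p,q,s,a_0,A_0,\nu$. (As in Lemma \ref{lem4.2}, when $1<p<N$ this step needs $\gamma>N/p$, which should be understood in the hypothesis.)

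For the inductive step, assume the statement for $k-1$ with the same $\delta$, and assume the hypotheses for $k$. The point is to check, for the same minimizer $u$ on the same ball $B_R$, the hypotheses of the $(k-1)$-statement \emph{with $t$ replaced by $\delta t$}; since $\delta^{k-1}(\delta t)=\delta^k t$ and $h_{16R}(\delta^k t)=h_{16R}(\delta^{k-1}(\delta t))$, and since the tail of $u_-$ is unchanged, that statement returns precisely $u\ge\delta^k t$ in $B_R$. The one substantial item to verify is the measure-improvement estimate
\[
\bigl|B_R\cap\{u\ge\delta t\}\bigr|\ \ge\ \nu^{k-1}\,|B_R|,
\]
equivalently $|B_R\cap\{u<\delta t\}|\le(1-\nu^{k-1})|B_R|$. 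The route is the one behind Lemma \ref{lem4.1}: apply the De Giorgi isoperimetric inequality (Lemma \ref{measure1}) to the truncations $(u-2^{-j}t)_-$ across the dyadic levels $2^{-j}t$ between a fixed multiple of $\delta t$ and $t$, bound $\|\nabla(u-2^{-j}t)_-\|_{L^p(B_R)}$ through the Caccioppoli inequality (Lemma \ref{lemCa}) together with the smallness hypothesis (which, since it is written via $h_{16R}(\delta^k t)$, dominates the $f$- and tail-contributions at all these scales), and add the inequalities, distinguishing as in Lemma \ref{lem4.1} the two cases according to which term of $h_{\cdot}$ is dominant at the scale $R$; combined with $|B_R\cap\{u\ge t\}|\ge\nu^k|B_R|$ and a final, data-and-$\nu$-dependent smallness demand on $\delta$, this yields the displayed bound, whereupon the induction closes. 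At the end $\delta$ is taken to be the smallest of the (finitely many, $k$-independent) thresholds coming from the base case and the measure-improvement step, and one passes from ``a.e.'' to ``everywhere'' using the continuity of $u$ (Theorem \ref{th3}).

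The main obstacle is keeping $\delta$ independent of $k$: a naive single use of Lemma \ref{lem4.1} with the density $\nu^k$ loses a factor $\nu^{-k}$ that no fixed $\delta$ can absorb, so the iteration must be organised so that the level is decreased by the \emph{same} universal factor $\delta$ at each of the $k$ stages and only a bounded portion of density is given up per stage; the total loss then accumulates merely along the geometric chains $\nu^k\to\nu^{k-1}\to\cdots\to 1$ and $t\to\delta t\to\cdots\to\delta^k t$. Making this uniform accounting rigorous — the point the phrase ``in a very similar way to Lemma \ref{lem4.2}'' glosses over — is the heart of the argument; the remaining manipulations are exactly those already performed, for $k=1$, in the proofs of Lemmas \ref{lem4.1} and \ref{lem4.2}.
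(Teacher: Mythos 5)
Your bookkeeping is correct as far as it goes: the base case via Lemma \ref{lem4.2} with $M=0$ works (the quantity $\rho^{-sq}\operatorname{Tail}^{q-1}(u_-;x_0,\rho)$ is indeed scale-independent for $\rho\le 16R$ because $u_-\equiv 0$ in $B_{16R}$, and the $f$-term transfers up to a fixed factor), and the inductive step does reduce, exactly as you say, to the single measure-improvement claim $|B_R\cap\{u\ge\delta t\}|\ge\nu^{k-1}|B_R|$, since $h_{16R}(\delta^{k-1}(\delta t))=h_{16R}(\delta^k t)$. The genuine gap is at that claim, i.e.\ at the step you yourself call the heart of the argument: the proof you sketch for it cannot yield a $\delta$ independent of $k$. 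Running the Lemma \ref{lem4.1} machinery (Caccioppoli, Lemma \ref{lemCa}, for the truncations $(u-2^{-j}t)_-$, then Lemma \ref{measure1} summed over the dyadic levels between $\delta t$ and $t$, with the density hypothesis $|B_R\cap\{u\ge t\}|\ge\nu^k|B_R|$ inserted into the left factor of the isoperimetric inequality) produces exactly
\begin{equation*}
\frac{|B_R\cap\{u<\delta t\}|}{|B_R|}\ \le\ \frac{C}{\nu^{k}}\Big(\delta^{(q-1)/2}+|\log\delta|^{-\frac{N(p-1)}{(N-1)p}}\Big),
\end{equation*}
and for the induction you need this to be at most $1-\nu^{k-1}$, which forces the bracket to be $\lesssim\nu^{k}$. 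In the regime where the local term dominates in $h$ (unavoidable once the level $\delta^k t$ drops below $R^{(sq-p)/(q-p)}$, i.e.\ for large $k$) this means $\delta\lesssim\exp\big(-c\,\nu^{-k(N-1)p/(N(p-1))}\big)$, and even in the nonlocal-dominated regime $\delta\lesssim\nu^{2k/(q-1)}$; either way $\delta=\delta(k)$, the per-stage level loss is not a fixed factor, and the induction does not close. Your final paragraph asserts that the losses can be made to ``accumulate only along geometric chains,'' but no mechanism is given for improving the density by the fixed factor $\nu^{-1}$ while lowering the level by a $k$-independent factor, and the single-scale tools you invoke demonstrably cannot deliver this; the measure-improvement statement with $k$-uniform $\delta$ is essentially equivalent to the lemma itself.

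The uniformity in $k$ is obtained in the sources this paper follows (the scheme behind \cite[Prop.~6.8]{Co}, Di Castro--Kuusi--Palatucci, Garain--Kinnunen) by a genuinely multi-scale argument, which is what the paper's one-line remark ``in a very similar way to Lemma \ref{lem4.2} with $M=0$'' is silently invoking: one either uses a Krylov--Safonov type covering lemma, or locates balls on which $\{u\ge t\}$ has a \emph{fixed} density and then iterates the one-step critical-density lemma (Lemma \ref{lem4.2} with $M=0$) along a dyadic chain of balls, losing the same fixed factor $\delta$ in the level at each doubling, so that after $O(k)$ doublings one covers $B_R$ at level comparable to $\delta^{k}t$; this is also why the hypotheses are formulated on $B_{16R}$, with nonnegativity there and the smallness measured at the final level $\delta^k t$ (which is what makes every intermediate scale and level admissible, given $\gamma>N/p$). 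An ingredient of this kind --- a covering lemma or a chain of expansions over scales --- is absent from your proposal, so the inductive step remains unproven as written.
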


With Lemma \ref{lem5.1} at hands, we can conclude the following weak Harnack inequality.

\begin{lemma}\label{lem5.2}
 Assume that $K_{sq}$ and $F$ satisfy \eqref{F} and \eqref{K} with $a_0>0$.
Let $u \in \mathcal{A}(\Omega) \cap L_{s q}^{q-1}\left(\mathbb{R}^N\right)$, nonnegative in a ball $B_{16 R}:=B_{16R}(x_0) \subset\subset\Omega$ with $R \leq 1$, be a minimizer of \eqref{q1}.
Suppose that the function $f|_{B_{16 R}}$ belongs to $L^{\gamma}(B_{16 R})$ with $\gamma>\max\Big\{1,\frac{N}{p}\Big\}$.
Then there exist constants $\varepsilon_0 \in(0,1)$ and $C \geq 1$, both depending on $s,p,q,N,\Lambda,a_0$ and $A_0$,
such that
\begin{align}\label{wh}
\left(\dashint_{B_R} u^{\varepsilon_0} \,d x\right)^{\frac{1}{\varepsilon_0}}
\leq C \inf _{B_R} u+C h_{ 16R }^{-1}
\left(d
+(16R)^{-sq}\operatorname{Tail}^{q-1}\left(u_{-} ; x_0, 16 R\right)\right),
\end{align}
where $ d:=\Big(\dashint_{B_{16R}}f^\gamma(x)\,dx\Big)^{\frac{1}{\gamma}}$.
\end{lemma}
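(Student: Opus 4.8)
The plan is to follow the classical De Giorgi--Moser scheme adapted to the mixed setting, combining the measure-theoretic expansion of positivity from Lemma \ref{lem5.1} with a Krylov--Safonov--type covering/measure argument, in the spirit of \cite{Co, Ki22, By22}. First I would reduce matters to a normalized statement: set
$$\tilde d:=h_{16R}^{-1}\!\left(d+(16R)^{-sq}\operatorname{Tail}^{q-1}(u_-;x_0,16R)\right),$$
and observe that it suffices to prove, for suitable $\varepsilon_0\in(0,1)$ and $C\ge1$, the estimate
$$|\{x\in B_R: u(x)>t\}|\le C\Big(\frac{\inf_{B_R}u+\tilde d}{t}\Big)^{\varepsilon_0}|B_R|\qquad\text{for all }t>0,$$
since integrating this distributional bound over $t$ (Cavalieri) yields \eqref{wh} after choosing $\varepsilon_0$ small. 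So the core task is a good decay estimate on the super-level sets of $u$.

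The heart of the argument is the following iteration. Suppose $\inf_{B_R}u+\tilde d$ is small compared with $t$; I want to show the super-level set $\{u>t\}\cap B_R$ has small relative measure. Arguing by contradiction, if $|\{u\ge t\}\cap B_R|\ge \nu^k|B_R|$ for some $k$, then provided the smallness condition
$$\|f\|_{L^\gamma(B_{16R})}|B_{16R}|^{-1/\gamma}+(16R)^{-sq}\operatorname{Tail}^{q-1}(u_-;x_0,16R)\le h_{16R}(\delta^k t)$$
holds — which it does once $\tilde d\le \delta^k t$, i.e. once $t$ is large enough relative to $\tilde d$ — Lemma \ref{lem5.1} forces $u\ge \delta^k t$ on all of $B_R$, hence $\inf_{B_R}u\ge\delta^k t$. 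Contrapositively, if $\inf_{B_R}u<\delta^k t$ and $\tilde d<\delta^k t$, then $|\{u\ge t\}\cap B_R|<\nu^k|B_R|$. Choosing $k=k(t)$ to be the integer with $\delta^{k}t\approx \inf_{B_R}u+\tilde d$ (i.e. $k\sim \log(t/(\inf_{B_R}u+\tilde d))/\log(1/\delta)$) converts this into the polynomial decay
$$|\{u>t\}\cap B_R|\le \nu^{k(t)}|B_R|\le C\Big(\tfrac{\inf_{B_R}u+\tilde d}{t}\Big)^{\varepsilon_0}|B_R|$$
with $\varepsilon_0:=\log(1/\nu)/\log(1/\delta)$, since $\nu^{k(t)}=(1/\nu)^{-k(t)}$ and the exponent $k(t)$ is logarithmic in $t$. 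This is exactly the distributional bound needed above.

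I would carry out the steps in this order: (i) normalize and record $\tilde d$; (ii) state the target distributional inequality and show it implies \eqref{wh} by the Cavalieri computation, being careful that $\varepsilon_0$ must be chosen smaller than some threshold so that $\int_0^\infty t^{\varepsilon_0-1}\min\{1,(\cdot/t)^{\varepsilon_0}\}\,dt$ converges — this fixes $\varepsilon_0$ in terms of $\nu,\delta$; (iii) prove the contrapositive measure-decay statement by invoking Lemma \ref{lem5.1} with the scale $k=k(t)$, checking that the hypothesis $\tilde d\le\delta^k t$ of that lemma is met by the very choice of $k$; (iv) assemble. The main obstacle I anticipate is the bookkeeping at step (iii): one must verify that as $t$ ranges over $(0,\infty)$ the induced integer $k(t)$ always keeps the smallness condition of Lemma \ref{lem5.1} valid, and simultaneously that the tail term $\operatorname{Tail}^{q-1}(u_-;x_0,16R)$ and the forcing $d$ are both absorbed into $h_{16R}(\delta^k t)$ — this is where the precise form of $h_{16R}$ (a sum of a $(p-1)$- and a $(q-1)$-power term) matters, and one has to split according to whether $\delta^k t/R\lessgtr (\delta^k t/R^s)^{q/p}$, mirroring the case distinction \eqref{cases} in the proof of Lemma \ref{lem4.1}. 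A secondary technical point is that Lemma \ref{lem5.1} is stated on $B_{16R}$ while the conclusion concerns $B_R$; the factor-$16$ loss in radii is harmless and is absorbed into the constant $C$, but one should make sure the tail and $L^\gamma$ quantities are consistently evaluated on $B_{16R}$ throughout.
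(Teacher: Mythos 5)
Your proposal is correct and follows essentially the same route as the paper: you apply Lemma \ref{lem5.1} with a level-dependent integer $k(t)$ to obtain a power-type decay of the super-level sets of $u$ (the paper phrases this as the pointwise bound $\inf_{B_R}u+h_{16R}^{-1}(d+(16R)^{-sq}\operatorname{Tail}^{q-1}(u_-;x_0,16R))\geq \delta\big(|A^+(t,x_0,R)|/|B_R|\big)^{\frac{1}{2\varepsilon_0}}t$, choosing $k$ from the measure of $A^+(t,x_0,R)$ with $\nu=\tfrac12$ rather than contrapositively from the size of $t$), and then concludes by the same Cavalieri/layer-cake computation, citing \cite[Proposition 6.8]{Co}. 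The only cosmetic remark is that the case splitting you anticipate on which power in $h_{16R}$ dominates is unnecessary, since $\tilde d\le\delta^k t$ is equivalent to the smallness hypothesis of Lemma \ref{lem5.1} by monotonicity of $h_{16R}$.
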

\begin{proof}
[\bf Proof.]
Let $\delta \in\left(0, \frac{1}{2^8}\right]$ be the constant determined in Lemma \ref{lem4.2} under the choice $\nu=\frac{1}{2}$.
We accordingly set
\begin{align}\label{e}
\varepsilon_0:=\frac{\log \nu}{2 \log \delta}=\frac{1}{2 \log _{\frac{1}{2}} \delta} \in(0,1).
\end{align}
We claim that for any $t \geq 0$,
\begin{align}\label{inf}
\inf _{B_R} u
+
h_{16R}^{-1}
\left(d+(16R)^{-sq}\operatorname{Tail}^{q-1}\left(u_{-} ; x_0, 16 R\right)\right)
 \geq \delta\left(\frac{\left|A^{+}(t, x_0,R)\right|}{\left|B_R\right|}\right)^{\frac{1}{2 \varepsilon_0}} t .
\end{align}
We only consider the case $t \in\left[0, \sup _{B_R} u\right)$. Otherwise the above inequality holds trivially.

For each $t \in\left[0, \sup _{B_R} u\right)$, let $k=k(t)$ be the unique integer fulfilling
\begin{align}\label{kt}
\log _{\frac{1}{2}} \frac{\left|A^{+}(t,x_0, R)\right|}{\left|B_R\right|}
\leq k<1+\log _{\frac{1}{2}} \frac{\left|A^{+}(t, x_0,R)\right|}{\left|B_R\right|} .
\end{align}
Notice that \eqref{e} and \eqref{kt} indicate
$$
\delta^k \geq \delta\left(\frac{\left|A^{+}(t, x_0,R)\right|}{\left|B_R\right|}\right)^{\frac{1}{2 \varepsilon_0}}.
$$
Now we only consider the case that
\begin{align*}
d
+(16R)^{-sq}\operatorname{Tail}^{q-1}\left(u_{-} ; x_0, 16 R\right)
<h_{16R }\left(\delta^k t\right).
\end{align*}
Otherwise it is easy to see \eqref{inf} is true based on the definition of $k$.
Still by \eqref{kt}, there holds that
$$
\left|A^{+}(t,x_0, R)\right| \geq 2^{-k}\left|B_R\right|,
$$
which in conjunction with Lemma \ref{lem5.1} infers that
$$
u \geq \delta^k t \text { in } B_R,
$$
and so
$$
\inf _{B_R} u
+
h_{16R}^{-1}
\left(d
+(16R)^{-sq}\operatorname{Tail}^{q-1}\left(u_{-} ; x_0, 16 R\right)\right)
 \geq \delta^k t .
$$
This along with \eqref{kt} guarantees \eqref{inf}.
At this moment, a similar argument as in the proof of \cite[Proposition 6.8]{Co} deduces the desired result.
\end{proof}

Next, we combine the supremum estimate in Theorem \ref{th1}, the weak Harnack inequality in Lemma \ref{lem5.2} and the tail estimate below to infer Harnack estimate, Theorem \ref{th4}.

\medskip

\noindent{\bf Proof of Theorem \ref{th4}.}
We first are going to obtain a tail estimate.
Fix any $z\in B_R(x_0)\subset\subset\Omega$ and $r\in(0,2R]$.
By denoting $M:=\sup _{B_r(z)} u>0$, we apply Lemma \ref{lemCa} with $k \equiv 2 M$ to get that
\begin{align}\label{har1}
 &\int_{B_{r / 2}(z)}(u(x)-2 M)_{-}\left(\int_{\mathbb{R}^N} \frac{(u(y)-2 M)_{+}^{q-1}}{|x-y|^{N+sq}} d y\right) \,d x \nonumber\\
\leq & C\left(\frac{\left\|(u-2 M)_{-}\right\|_{L^q\left(B_r(z)\right)}^q}{r^{s q}}
+ \frac{\left\|(u-2 M)_{-}\right\|_{L^p\left(B_r(z)\right)}^p}{r^p}\right)\nonumber\\
& +\frac{C}{r^{sq}}\left\|(u-2 M)_{-}\right\|_{L^1\left(B_r(z)\right)}
 \operatorname{Tail}^{q-1}\left((u-2 M)_{-} ; z, r / 2\right) \nonumber\\
&
 +C\|(u-2 M)_{-}\|_{L^{\gamma'}\left(B_r(z)\right)}\|f\|_{L^{\gamma}\left(B_r(z)\right)}.
\end{align}
It is easy to find out that
$$
(u(y)-2 M)_{+}^{q-1} \geq \min \left\{1,2^{2-q}\right\} u^{q-1}_{+}(y)-2^{q-1} M^{q-1} .
$$
From the above two observations and the fact that $u \leq M$ on $B_r(z)$, it follows that
\begin{align}\label{har1'}
& \quad\int_{B_{r/2}(z)}(u(x)-2 M)_{-}\left(\int_{\mathbb{R}^N} \frac{(u(x)-2 M)_{+}^{q-1}}{|x-y|^{N+sq }} d y\right)\,d x \nonumber\\
& \geq 2^{-N-sq} M \int_{B_{r/2
}(z)}
\left(\int_{\mathbb{R}^N\backslash B_r(z)} \frac{\min \left\{1,2^{2-q}\right\} u^{q-1}_{+}(y)-2^{q-1} M^{q-1}}{|y-z|^{N+sq}} d y\right) \,d x \nonumber\\
& \geq \frac{M r^{N-s q}}{C} \operatorname{Tail}\left(u_{+} ; z, r\right)-C r^{N-s q} M^q,
\end{align}
where we utilized the fact
$|x-y| \leq 2|y-z|$ for any $x \in B_r(z)$ and $y \in \mathbb{R}^N \backslash B_r(z)$.
On the other hand, since $u \geq 0$ on $B_r(z)$, we have
\begin{align}\label{har2}
\frac{\left\|(u-2 M)_{-}\right\|_{L^q\left(B_r(z)\right)}^q}{r^{s q}}
+ \frac{\left\|(u-2 M)_{-}\right\|_{L^p\left(B_r(z)\right)}^p}{r^p}
\le
r^{N-s p}(r^{s q-p}M^p+ M^q).
\end{align}
The last two terms can be estimated as:
\begin{align}\label{har3}
&\frac{1}{r^{sq}}\left\|(u-2M)_{-}\right\|_{L^1\left(B_r(z)\right)} \operatorname{Tail}^{q-1}\left((u-2M)_{-};z,r/2\right) \nonumber\\
&\quad+\left\|(u-2 M)_{-}\right\|_{L^{\gamma'}\left(B_r(z)\right)}\|f\|_{L^{\gamma}\left(B_r(z)\right)} \nonumber\\
&\le Mr^{N-sq} \operatorname{Tail}^{q-1}\left(u_-;z,r\right)+Mr^{\frac{N}{\gamma'}}\|f\|_{L^{\gamma}\left(B_r(z)\right)} \nonumber\\
&\le Mr^{N-sq} \operatorname{Tail}^{q-1}(u_-;z,r)+Mr^Nd.
\end{align}
Substituting \eqref{har1'}, \eqref{har2} and \eqref{har3} into \eqref{har1} infers that
\begin{align*}
Mr^{N-sq} \operatorname{Tail}^{q-1}\left(u_+ ; z, r\right)
 \leq Cr^{N-sq}\left(M\operatorname{Tail}^{q-1}\left(u_{-} ; z, r\right)+Mr^{sq}d
 +r^{s q-p}M^p+ M^q\right),
\end{align*}
which directly ensures the following tail estimate that
\begin{align*}
\operatorname{Tail}^{q-1}\left(u_{+} ; z, r\right)
 & \leq C\big(\operatorname{Tail}^{q-1}\left(u_{-} ; z, r\right)+r^{s q}h_r(M)+r^{s q}d\big).
\end{align*}

By applying the reverse operator $h_r^{-1}$, we obtain
\begin{align*}
h_{r}^{-1}\big(r^{-sq}\operatorname{Tail}^{q-1}\left(u_{+} ; z, r\right)\big)
 & \leq Ch_{r}^{-1}\big(r^{-sq}\operatorname{Tail}^{q-1}\left(u_{-} ; z, r\right)\big)+M+h_{r}^{-1}(d).
\end{align*}
From Lemma \ref{lem3.3}, one can say that for any $\delta_1$,
\begin{align*}
\sup _{B_{r}(z)} u_{+}
&\leq C_{\delta_1} H_{2r}^{-1}\left(\dashint_{B_{ 2r}} H_{2r}\left(u_{+}\right) \,d x\right)
+\delta_1 h_{2r}^{-1}\big((2r)^{-sq}\operatorname{Tail}^{q-1} (u_+;x_0,r)\big)
+\delta_1 g_{2r}^{-1}(d)
\end{align*}
with some $C_{\delta_1}>0$ depending on $\delta_1$.
A combination of the above two estimates implies that
\begin{align}\label{har4}
\sup _{B_r(z)} u & \leq C_{\delta_1} H_{2r}^{-1}\left(\dashint_{B_{ 2r}} H_{2r}\left(u_{+}\right) \,d x\right)
+C\delta_1 h_{r}^{-1}\big(r^{-sq}\operatorname{Tail}^{q-1}\left(u_{-} ; z, r\right)\big)
+\delta_1 g_{2r}^{-1}(d)\nonumber\\
& \quad+ C{\delta_1} M
+h_{r}^{-1}(d)\nonumber\\
&\leq C_{\delta_1} H_{r}^{-1}\left(\dashint_{B_{ 2r}} H_{r}\left(u_{+}\right) \,d x\right)
+C\delta_1 \operatorname{Tail}(u_-;z,r)
+\delta_1 g_{r}^{-1}(d)+ C{\delta_1} M
+h_{r}^{-1}(d),
\end{align}
where we used the regularity of the functions $H_{r},g_{r},h_{r}$.
Using Jensen's inequality with the convex function $t \mapsto\left[H_{r}^{-1}(t)\right]^q$,
we can see that for any $\delta_2>0$,
\begin{align}\label{har5}
H_{r}^{-1}\left(\dashint_{B_{ 2r}(z)} H_{r}\left(u_{+}\right) \,d x\right)
& \leq\left(\dashint_{B_{2 r}(z)} u^q \,d x\right)^{\frac{1}{q}} \nonumber\\
& \leq\left(\sup _{B_{2 r}(z)} u\right)^{\frac{q-\varepsilon_0}{q}}
\left(\dashint_{B_{2 r}(z)} u^{\varepsilon_0} \,d x\right)^{\frac{1}{q}} \nonumber\\
& \leq \delta_2 \sup _{B_{2 r}(z)} u+C_{\delta_2}\left(\dashint_{B_{2 r}(z)} u^{\varepsilon_0} \,d x\right)^{\frac{1}{\varepsilon_0}}
\end{align}
with $\varepsilon_0>0$ determined in Lemma \ref{lem5.2}.
With taking $\delta_1, \delta_2$ sufficiently small, we derive from \eqref{har4} and \eqref{har5} that
\begin{align}\label{har6}
\sup _{B_r(z)} u& \leq \frac{1}{2} \sup _{B_{2 r}(z)} u
+C\left(\dashint_{B_{2 r}(z)} u^{\varepsilon_0} \,d x\right)^{\frac{1}{\varepsilon_0}}
+C \operatorname{Tail}\left(u_{-} ; z, r\right)+Ch_{r}^{-1}(d_r)+Cg_{r}^{-1}(d_r)\nonumber\\
&\leq \frac{1}{2} \sup _{B_{2 r}(z)} u
+C\left(\dashint_{B_{2 r}(z)} u^{\varepsilon_0} \,d x\right)^{\frac{1}{\varepsilon_0}}
+C \operatorname{Tail}\left(u_{-} ; z, r\right)\nonumber\\
&\quad+C\big(r^{-N+p}\|f\|_{L^\gamma(B_r(z))}\big)^{\frac{1}{p-1}}+C\big(r^{-N+\bar{q}}\|f\|_{L^\gamma(B_r(z))}\big)^{\frac{1}{\bar{q}-1}}.
\end{align}
By employing \eqref{har6} along with a suitable covering argument and the technical lemma \cite[Lemma 4.11]{Co}, we arrive at
\begin{align*}
\sup _{B_R} u &\leq \left(\dashint_{B_{2R}} u^{\varepsilon_0} \,d x\right)^{\frac{1}{\varepsilon_0}}
+ \operatorname{Tail}\left(u_{-} ; x_0,R\right)\\
&\quad+R^{(p-\frac{N}{\gamma})/(p-1)}\|f\|^{\frac{1}{p-1}}_{L^\gamma(B_{2R})}
+R^{(\bar{q}-\frac{N}{\gamma})/(\bar{q}-1)}\|f\|^{\frac{1}{\bar{q}-1}}_{L^\gamma(B_{2R})}.
\end{align*}
This together with \eqref{wh} yields the desired Harnack's inequality.\qquad$\Box$

 {\small
}
\end{document}